\def\var{\mathop{\rm Var}\nolimits}
\def\cov{\mathop{\rm Cov}\nolimits}
\def\sd{\mathop{\rm sd}\nolimits}
\def\II{1}
\def\Cref{\ref}
\def\PY{\mathop{\rm PY}\nolimits}
\def\CPY{\mathop{\rm CPY}\nolimits}
\mathchardef\given="626A
\def\X{{\cal X}}
\def\A{{\cal A}}
\def\F{{\cal F}}
\def\G{{\cal G}}
\def\E{\EE}
\def\FF{\mathbb{F}}
\def\GG{\mathbb{G}}
\def\PP{\mathbb{P}}
\def\EE{\mathbb{E}}
\def\NN{\mathbb{N}}
\def\RR{\mathbb{R}}
\def\WW{\mathbb{W}}
\def\ra{\rightarrow}
\def\s{\sigma}
\def\a{\alpha}
\def\b{\beta}
\def\d{\delta}
\def\eps{\epsilon}
\def\e{\epsilon}
\def\l{\lambda}
\def\g{\gamma}
\def\Beta{\mathop{\rm B}\nolimits}
\def\Dir{\mathop{\rm Dir}\nolimits}
\let\weak\rightsquigarrow
\def\linfty{\ell^\infty}
\def\sumin{\sum_{i=1}^n}
\newcommand{\ind}{\, \raise-2pt\hbox{$\stackrel{\mbox{\scriptsize ind}}{\sim}$}\, }
\newcommand{\iid}{\, \raise-2pt\hbox{$\stackrel{\mbox{\scriptsize iid}}{\sim}$}\,}
\theoremstyle{plain}
\newtheorem{theorem}{Theorem}
\newtheorem{lemma}[theorem]{Lemma}
\newtheorem{corollary}[theorem]{Corollary}
\newtheorem*{example}{Example}
\begin{document}

\begin{frontmatter}
\title{Bernstein-von Mises theorem for the Pitman-Yor process of nonnegative type}
\runtitle{BvM for the Pitman-Yor process}
\maketitle

\begin{aug}
\author{\fnms{S.E.M.P.} \snm{Franssen\thanksref{t1}}\ead[label=e1]{s.e.m.p.franssen@tudelft.nl}}
\address{TU Delft \\ \printead{e1}}
\author{\fnms{A.W.} \snm{van der Vaart\thanksref{t1}}\ead[label=e2]{a.w.vandervaart@tudelft.nl}}
\address{TU Delft \\ \printead{e2}}

\thankstext{t1}{The research leading to these results 
is partly financed by a Spinoza prize awarded
 by the Netherlands Organisation for Scientific Research (NWO).}

\runauthor{S.E.M.P. Franssen and A.W. van der Vaart}

\end{aug}

\begin{abstract}
The Pitman-Yor process is a random probability distribution, that can be used 
as a prior distribution in a nonparametric Bayesian analysis.
The process is of species sampling type and generates discrete distributions, 
which yield of the order $n^\sigma$ different values (``species'') in
a random sample of size $n$, if the type $\sigma$ is positive.
Thus this type parameter can be set  to target true distributions of various levels of discreteness,
making the Pitman-Yor process an interesting prior in this case.
It was previously shown that the resulting posterior distribution is consistent if and only if the
true distribution of the data is discrete. In this paper we derive the distributional limit of the posterior distribution, in the form
of a (corrected) Bernstein-von Mises theorem, which previously was 
known only in the continuous, inconsistent case. It turns out that the Pitman-Yor
posterior distribution has good behaviour if the true distribution of the data 
is discrete with atoms that decrease not too slowly. Credible sets
derived from the posterior distribution provide valid frequentist confidence sets in this case. For a general
discrete distribution, the posterior distribution, although consistent, may contain a bias which does not converge to zero at the $\sqrt{n}$ rate
and invalidates posterior inference.
We propose a bias correction that solves this problem.  We also consider the effect of estimating the type parameter from 
the data, both by empirical Bayes and full Bayes methods. 
In a small simulation study we illustrate that without bias correction the coverage of credible sets can be arbitrarily low,
also for some discrete distributions.

\end{abstract}
\begin{keyword}[class=MSC2020]
  \kwd[Primary ]{62G20}
  \kwd[; secondary ]{62G15}
\end{keyword}

\begin{keyword}
\kwd{Pitman-Yor process}
\kwd{Bernstein-von Mises theorem}
 \kwd{weak convergence}
\kwd{credible set}
\kwd{empirical Bayes}
\kwd{species sampling}
\end{keyword}
\end{frontmatter}

\section{Introduction}
The Pitman-Yor process \cite{PitmanandYor(1997),PermanPitmanandYor(1992)}
is a random probability distribution, which can be used as a prior distribution in a nonparametric Bayesian analysis.
It is characterised by a \emph{type} parameter $\sigma$, which in this paper we take to be positive. 
The Pitman-Yor process of type $\sigma=0$
is the Dirichlet process \cite{Ferguson(1974)}, which is well understood, while negative types correspond to finitely discrete distributions
and were considered in \cite{DeBlasiLijoiandPrunster(2013)}.
The Pitman-Yor process is also known as the two-parameter Poisson-Dirichlet Process,
 is an example of a Poisson-Kingman process \cite{Pitman(2003)}, and a species sampling process
of Gibbs type~\cite{deBlasi2015}.

The easiest definition is through \emph{stick-breaking} (\cite{PermanPitmanandYor(1992),IshwaranJames}), as follows.
The family of nonnegative Pitman-Yor processes is given by three parameters: a number $\sigma \in [0,1)$, a number $M > -\sigma$
and an atomless probability distribution $G$ on some measurable space $(\X,\A)$.
We say that a random probability measure $P$ on $(\X,\A)$ is a Pitman-Yor process (of nonnegative type), 
denoted $P \sim \PY\left(\sigma, M, G \right)$, if $P$ can be represented as
\[
P = \sum_{i = 1}^\infty W_i \delta_{\theta_i},
\]
where $W_i = V_i \prod_{j = 1}^{i - 1} (1 - V_j)$ for $V_i \iid\Beta\left( 1 - \sigma, M + i \sigma\right)$, independent of 
$\theta_i \iid G$, and $\Beta$ the beta distribution.

It is clear from this definition that the realisations of $P$ are discrete probability measures, with countably many
atoms at random locations, with random weights. If one first draws $P\sim \PY\left(\sigma, M, G \right)$, and next given $P$ a random sample
$X_1,\ldots, X_n$ from $P$, then ties among the latter observations are possible, or even likely. It is known (\cite{Pitman(2003)})
that the number $K_n$ of different values among $X_1,\ldots, X_n$ is  almost surely of the order $n^\sigma$ if $\sigma>0$, whereas it
is logarithmic in $n$ if $\sigma=0$. This suggests that the Pitman-Yor process is a reasonable prior distribution
for a dataset in which similar patterns are expected (or observed). In particular, when a large number of clusters is expected,
a Pitman-Yor process of positive type could be preferred over the standard Dirichlet prior, which corresponds to $\sigma=0$.
Applications in genetics or topic modelling can be found in \cite{Wood2009, Teh2006, Goldwater2005, Arbel2018}. The Pitman-Yor 
process has also been proposed as a prior for estimating 
the probability that a next observation is a new species \cite{favaro2021nearoptimal}, with applications
in e.g.\ forensic statistics \cite{Cereda2017,Cereda2019}. The papers \cite{Camerlenghietal,Lijoietal2019}
study hierarchical versions of Pitman-Yor processes, which are useful  to discover structure in data beyond clustering.

In this paper we consider the properties of the Pitman-Yor posterior distribution to estimate
the distribution of a random sample of observations. By definition this posterior distribution is 
the conditional distribution of $P$ given $X_1,\ldots, X_n$ in the Bayesian hierarchical
model $P\sim \PY\left(\sigma, M, G \right)$ and  $X_1,\ldots, X_n\given P\iid P$. We assume that in reality the observations
$X_1,\ldots, X_n$ are an i.i.d. (i.e.\ independent and identically distributed) 
sample from a distribution $P_0$ and investigate the use of the posterior distribution 
for inference on $P_0$. It was shown in~\cite{James2008, deBlasi2015} that in this setting, as $n\rightarrow\infty$,
\begin{equation}
\label{EqInconsistency}
  P \given X_1, \ldots, X_n \rightsquigarrow \delta_{(1 - \lambda)P_0^d + \lambda(1 - \sigma) P_0^c + \sigma \lambda G},
\end{equation}
where $\rightsquigarrow$ denotes weak convergence of measures,  $\delta_Q$ denotes the Dirac measure at the probability distribution $Q$, and 
$P_0 = (1 - \lambda) P_0^d + \lambda P_0^c$ is the decomposition of $P_0$ in its discrete component $(1-\lambda) P_0^d$ and the remaining (atomless) part $\lambda P_0^c$.
In the case that $P_0$ is discrete, we have $\lambda=0$ and the measure 
$(1 - \lambda)P_0^d + \lambda(1 - \sigma) P_0^c + \sigma \lambda G$ in the right side reduces to $P_0^d=P_0$,  
and hence \eqref{EqInconsistency} expresses that the posterior distribution collapses asymptotically to the Dirac measure at $P_0$. The posterior distribution is said to be
consistent in this case. However, if $P_0$ is not discrete, then
the posterior distribution recovers $P_0$ asymptotically only if $\s=0$ (the case of the Dirichlet prior) or if $G = P_0^c$. The last case will typically
fail and hence in the case that $\s>0$ the posterior distribution will typically be consistent if and only if $P_0$ is discrete. This reveals the Pitman-Yor prior
of positive type as a reasonable prior only for discrete distributions.

Besides for recovery, a posterior distribution is used to express remaining uncertainty, for instance in the
form of a credible (or Bayesian confidence) set. To justify such a procedure from a non-Bayesian point of view,
the posterior consistency must be refined to a distributional result of Bernstein-von Mises type. Such a result was obtained
by \cite{James2008} in the case that the true distribution $P_0$ is atomless, 
the case that the posterior distribution is inconsistent and the Pitman-Yor
prior is better avoided. In the present paper we study the case of general distributions $P_0$, including the case of most interest
that $P_0$ is discrete. It turns out that discreteness per se is not enough for valid inference, but it is also needed that the weights of the
atoms in $P_0$ decrease fast enough. In the other case, ordinary Bayesian credible sets are not valid confidence sets. For
the latter case our result suggests a bias correction.

Since the type parameter $\sigma$ determines the number of distinct values in a sample from the prior, it might
be interpreted as influencing the discreteness of the prior, smaller $\sigma$ favouring fewer distinct values and hence 
a more discrete prior. In the asymptotic result the type parameter plays only a secondary role. 
At first thought counterintuitively,
a larger $\sigma$, which gives a less discrete prior, increases the bias in the posterior distribution that arises when the
atoms in $P_0$ decrease too slowly. 

In practice one may prefer to estimate the type parameter from the data. The empirical Bayes method
maximizes the marginal likelihood of $X_1,\ldots, X_n$ in the Bayesian setup over $\sigma$. We show 
that in the consistent case, substitution of
this estimator in the posterior distribution for given type parameter does not change the asymptotics 
of the Pitman-Yor posterior. Alternatively, we may equip $\sigma$ itself with a prior, resulting in  a
mixture of Pitman-Yor processes as a prior for $P$. We show that this too results in the same
posterior behaviour. Thus estimating the type parameter does not solve the inconsistency problem. 

We can conclude that the Pitman-Yor process is an appropriate prior for estimating a distribution 
only if the sizes of the atoms of this distribution decrease sufficiently rapidly. Our results show that
the speed of decay depends on the aspect of interest, for instance different for the distribution function
than for the mean.

Our results depend heavily on the characterisation of the posterior distribution given in \cite{Pitman1996b}
(see Section~\ref{SectionProofs}).

\section{Main result}
The nonparametric maximum likelihood estimator of the distribution $P$ of a sample of observations $X_1,\ldots, X_n$
is the empirical measure $\PP_n=n^{-1}\sum_{i=1}^n\delta_{X_i}$, the discrete uniform measure on the observations.
Therefore in analogy with the case of classical parametric models (e.g.\ Theorem~10.1 and page~144 in \cite{vanderVaart(1998)}),
in this setting a Bernstein-von Mises theorem would give the approximation of the posterior
distribution of $\sqrt n(P-\PP_n)$ given $X_1,\ldots, X_n$ by the normal distribution obtained as the limit of $\sqrt n (\PP_n-P_0)$.
To give a precise meaning to such a distributional statement, we may evaluate all the measures
involved on a collection of sets, and interpret $\sqrt n(P-\PP_n)$ and $\sqrt n (\PP_n-P_0)$ as stochastic processes
indexed by sets. For instance, in the case that the sample space is the real line, we could use the sets $(-\infty,t]$, for $t\in\RR$,
corresponding to the distribution functions of the measures $P$, $\PP_n$ and $P_0$.

More generally, we may evaluate these measures on measurable functions $f: \X\to\RR$, as
$$Pf=\int f\,dP,\qquad \PP_nf=\int f\,d\PP_n=\frac 1n \sum_{i=1}^nf(X_i),\qquad P_0f=\int f\,dP_0.$$
Given a collection $\F$ of such functions, the Bernstein-von Mises can then address the distributions of the 
stochastic processes $\bigl\{\sqrt n(Pf-\PP_nf): f\in\F\bigr\}$ and $\bigl\{\sqrt n (\PP_nf-P_0f): f\in\F\bigr\}$, the first
one conditionally given the observations $X_1,\ldots, X_n$.
For instance, in the case that $\X=\RR$, we might choose the collection $\F$
to consist of all indicator functions $x\mapsto 1_{x\le t}$, for $t$ ranging over $\RR$, but we can
also add the identify function $f(x)=x$, yielding the means of the measures.

For a  set $\F$ of finitely many functions, these processes are just vectors in Euclidean space 
and their distributions can be evaluated as usual. Furthermore, the  limit law of $\bigl\{\sqrt n (\PP_nf-P_0f): f\in\F\bigr\}$
is a multivariate normal distribution, in view of the multivariate central limit theorem (provided $P_0f^2<\infty$, for every $f\in\F$).
It is convenient to write the latter as the distribution of a Gaussian process
$\{\GG_{P_0}f: f\in\F\}$, determined by its mean  and covariance function
$$\EE \GG_{P_0}f=0\qquad \EE \GG_{P_0}f\GG_{P_0}g=P_0(f-P_0f)(g-P_0g).$$
The process $\GG_{P_0}$ is known as a {\sl $P_0$-Brownian bridge}
(see e.g.\ \cite{Pollard(1984), WCEP,vanderVaart(1998)}).

An appropriate generalisation (and strengthening) of the central limit theorem 
to sets $\F$ of infinitely many functions is Donsker's theorem (e.g.\ \cite{vanderVaart(1998)}, Chapter~19). The Bernstein-von Mises theorem can
be strengthened in a similar fashion. For the case of indicator functions on the real line, Donsker's theorem
was derived by \cite{Donsker}, and the corresponding Bernstein-von Mises theorem for the Dirichlet process by \cite{Lo(1983),Lo(1986)}.
A precise formulation (in the general case, which is not more involved than the real case) is as follows.

A class of functions $\F$ is called \emph{$P_0$-Donsker} if the sequence $\sqrt{n} \left( \PP_n - P_0 \right) $ converges in distribution
to a tight, Borel measurable element in the metric space  $\ell^\infty(\F)$ of bounded functions
$z: \F\to \RR$, equipped with the uniform norm $\|z\|_\F=\sup_{f\in\F}|z(f)|$. The limit is then a version of the Gaussian process $\GG_{P_0}$. 
The Bernstein-von Mises theorem involves conditional convergence in distribution given the observations
$X_1,\ldots, X_n$, which is best expressed using a metric. 
The bounded Lipschitz metric (see for example~\cite{WCEP}, Chapter 1.12) is convenient, and leads
to defining conditional convergence in distribution of the sequence $\sqrt n(P-\PP_n)$ in $\ell^\infty(\F)$ given $X_1,\ldots,X_n$ to $\GG_{P_0}$ as
\[
 \sup_{h \in \text{BL}_1} \Bigl| \EE \Bigl(h\bigl(\sqrt n(P-\PP_n)\bigr)\given X_1,\ldots, X_n\Bigr) - \EE h(\GG_{P_0}) \Bigr|\rightarrow 0,
\]
where the convergence refers to the i.i.d.\ sample $X_1,X_2,\ldots$ from $P_0$, 
and can be in (outer) probability or almost surely. The supremum is taken over the set  $\text{BL}_1$ of all functions $h: \ell^\infty(\F) \rightarrow [0,1]$ 
such that  $|h(z_1) - h(z_2)| \leq \| z_1 - z_2 \|_{\F}$, for all $z_1, z_2\in\linfty(\F)$. For simplicity of notation and easy interpretation, we  write the preceding
display as 
$$\sqrt n(P-\PP_n)\given X_1,\ldots, X_n\weak \GG_{P_0}.$$
Conditional convergence in distribution of other
processes is defined and denoted similarly. For finite sets $\F$, the complicated definition using the bounded Lipschitz metric
reduces to ordinary weak convergence of random vectors. Also, a finite set $\F$ is $P_0$-Donsker if and only if $P_0f^2<\infty$, for
every $f\in\F$. There are many examples of infinite Donsker classes (see e.g.\ \cite{WCEP}), with the set of indicators of cells $(-\infty,t]$ as the
classical example.

We are ready to formulate the main result of the paper.
Let $\tilde X_1,\tilde X_2,\ldots$ be the distinct values in $X_1,X_2,\ldots$ in the order of appearance, let
$K_n$ be the number of distinct elements among $X_1,\ldots, X_n$, and set 
\begin{equation}
\label{EqDefinitionPTilde}
\tilde\PP_n=\frac{1}{K_n}    \sum_{i = 1}^{K_n} \delta_{\tilde{X}_i}.
\end{equation}
All limit results refer to a sample $X_1,X_2,\ldots, X_n$ drawn from a measure $P_0$.
This can always be written 
as $P_0 = (1 - \lambda) P_0^d + \lambda P_0^c$, where $P_0^d$ is a discrete and $P_0^c$ an atomless distribution
and $\lambda \in [0,1]$ is the weight of the discrete part in $P_0$. The decomposition is unique unless $\lambda=0$ or $\lambda=1$, when
$P_0^c$ or $P_0^d$ is arbitrary.

\begin{theorem}
\label{thm:PYBVM}
Let $P_0 = (1 - \lambda) P_0^d + \lambda P_0^c$ where $P_0^d$ is a discrete  and $P_0^c$ an atomless probability distribution. 
The posterior distribution of $P$ in the model  $P\sim \PY \left( \sigma, M, G \right)$ and $X_1, \ldots, X_n \given P \iid P$ satisfies
for every finite collection $\F$ of functions with $(P_0+G)f^2<\infty$, for every $f\in\F$, almost surely under  $P_0^\infty$,
\begin{align*} \sqrt{n} \Bigl(&     P    -    \PP_n    -    \frac{\sigma K_n}{n} (G    -   \tilde\PP_n )\Bigr) \Big| \  X_1, \ldots, X_n \\
&\qquad\rightsquigarrow 
\sqrt{1 - \lambda}\, \GG_{P_0^d} + \sqrt{(1 - \sigma) \lambda}\,\GG_{P_0^c} 
+ \sqrt{\sigma(1 - \sigma)\lambda} \,\GG_G\\
&\qquad\qquad + \sqrt{(1 - \sigma \lambda)\sigma\lambda} 
\Bigl(    \frac{   (1 - \lambda) P_0^d + (1 - \sigma) \lambda P_0^c    }{ 1 - \sigma \lambda }    -     G\Bigr)\,Z_1\\
&\qquad\qquad\qquad+ \frac{\sqrt{(1 - \sigma) \lambda(1 - \lambda)}}{\sqrt{1 - \sigma \lambda}} (P_0^c - P_0^d) \,Z_2.
\end{align*}
Here $\GG_{P_0^d}$, $\GG_{P_0^c}$ and $\GG_G$ are independent Brownian bridge processes, independent
of the independent standard normal variables  $Z_1$ and $Z_2$.
More generally this is true, with convergence in $\linfty (\F)$ in probability,  for every $P_0$-Donsker class of functions $\F$
for which the $\PY\left( \sigma, \sigma, G \right)$ process satisfies the central limit theorem in $\ell^\infty\left( \mathcal{F} \right)$.
If in addition $P_0^* \| f - P_0 f \|_{\F}^2 < \infty$, then the convergence is also $P_0^\infty$-almost surely.
\end{theorem}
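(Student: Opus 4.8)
The plan is to reduce everything to Pitman's description of the posterior law. Given $X_1,\dots,X_n$ with distinct values $\tilde X_1,\dots,\tilde X_{K_n}$ occurring with multiplicities $N_1,\dots,N_{K_n}$, the conditional law of $P$ equals that of $(1-R)P^{*}+RQ$, where $R\sim\Beta(M+\sigma K_n,\,n-\sigma K_n)$, $P^{*}$ is a Dirichlet process with base measure $\beta_n:=\sum_{j=1}^{K_n}(N_j-\sigma)\delta_{\tilde X_j}$ (so $P^{*}f=\sum_j D_jf(\tilde X_j)$ with $(D_j)\sim\Dir(N_1-\sigma,\dots,N_{K_n}-\sigma)$), $Q\sim\PY(\sigma,M+\sigma K_n,G)$, and $R,P^{*},Q$ are mutually independent, all conditionally on the data; thus $Pf=(1-R)P^{*}f+RQf$. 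The first step is to record the $P_0$-almost-sure limits that drive the asymptotics: $K_n/n\to\lambda$; $\PP_n\to P_0$; the empirical measure $n^{-1}\sum_{j\le K_n}\delta_{\tilde X_j}$ of the distinct values tends to $\lambda P_0^c$ (only $o(n)$ distinct atoms of $P_0^d$ are ever seen); hence $\bar\beta_n:=\beta_n/|\beta_n|\to\bar P_0:=\bigl((1-\lambda)P_0^d+(1-\sigma)\lambda P_0^c\bigr)/(1-\sigma\lambda)$, with the analogous convergence of $f^2$-integrals under $(P_0+G)f^2<\infty$, and $\EE R=(M+\sigma K_n)/(n+M)\to\sigma\lambda$ with $n\,\var R\to\sigma\lambda(1-\sigma\lambda)$.

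For a finite class $\F$ I would then expand $Pf$ about the deterministic centering $(1-\EE R)\bar\beta_nf+\EE R\,Gf$ by Slutsky, after checking by a one-line identity (using $|\beta_n|=n-\sigma K_n$ and the value of $\EE R$) that this centering differs from $\PP_nf+\tfrac{\sigma K_n}{n}(Gf-\tilde\PP_nf)$ only by a term of order $1/n$, hence $\sqrt n$-negligible. The remainder splits into three pieces: $(1-\EE R)\sqrt n(P^{*}f-\bar\beta_nf)$, $\EE R\,\sqrt n(Qf-Gf)$, and $\sqrt n(R-\EE R)(Gf-\bar\beta_nf)$, the cross terms being $O_P(1/\sqrt n)$. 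For the first I invoke the Bernstein--von Mises theorem for the Dirichlet process, applied conditionally on the data with the then-deterministic base measures $\beta_n$ satisfying $|\beta_n|\to\infty$ and $\bar\beta_n\to\bar P_0$: this gives $\sqrt n(P^{*}-\bar\beta_n)\rightsquigarrow(1-\sigma\lambda)^{-1/2}\GG_{\bar P_0}$, and since $\bar P_0$ is the mixture $\tfrac{1-\lambda}{1-\sigma\lambda}P_0^d+\tfrac{(1-\sigma)\lambda}{1-\sigma\lambda}P_0^c$ I use the identity --- verified by matching covariance functions --- that $\GG_{\pi\mu_1+(1-\pi)\mu_2}$ has the law of $\sqrt\pi\,\GG_{\mu_1}+\sqrt{1-\pi}\,\GG_{\mu_2}+\sqrt{\pi(1-\pi)}(\mu_1-\mu_2)Z$ with mutually independent ingredients, to split it into its $\GG_{P_0^d}$-, $\GG_{P_0^c}$- and $Z$-parts. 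For the second, $Q\sim\PY(\sigma,M+\sigma K_n,G)$ has $\var(Qf)=\tfrac{1-\sigma}{1+M+\sigma K_n}\var_G(f)$, so $\sqrt{M+\sigma K_n}\,(Qf-Gf)\rightsquigarrow\sqrt{1-\sigma}\,\GG_Gf$ and hence $\EE R\,\sqrt n(Qf-Gf)\rightsquigarrow\sqrt{\sigma\lambda(1-\sigma)}\,\GG_Gf$. The third is a scalar central limit theorem for the Beta variable, $\sqrt n(R-\EE R)\rightsquigarrow\sqrt{\sigma\lambda(1-\sigma\lambda)}\,Z$, paired with $Gf-\bar\beta_nf\to Gf-\bar P_0f$. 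Mutual independence of the three groups of limits is read off Pitman's representation together with the mixture decomposition; collecting and simplifying the constants reproduces the stated limit, with $Z_1$ and $Z_2$ identified, up to sign, with the Beta-fluctuation normal and the component-mixing normal respectively. The $P_0$-a.s.\ statement for finite $\F$ is then immediate, every limit above being almost sure in the data.

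To obtain convergence in $\linfty(\F)$ for a $P_0$-Donsker class one upgrades each of the three limit theorems to a process statement: the Dirichlet Bernstein--von Mises theorem in $\linfty(\F)$ (an exchangeably weighted / multiplier empirical-process limit, valid because $P_0$-Donsker entails $P_0^d$- and $P_0^c$-Donsker --- both are absolutely continuous with respect to $P_0$ with bounded density --- and hence $\bar P_0$-Donsker); the functional CLT for the residual Pitman--Yor process, which is exactly the hypothesis that $\PY(\sigma,\sigma,G)$ obeys the CLT in $\linfty(\F)$, carried over to growing concentration $M+\sigma K_n\to\infty$; and uniform control of the scalar functionals $f\mapsto\bar\beta_nf$ and $f\mapsto Gf$. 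Asymptotic tightness and equicontinuity of the sum of the three processes then yield the in-probability statement, and replacing the in-probability multiplier and Dirichlet CLTs by their almost-sure versions gives the $P_0$-a.s.\ statement; this last replacement is where the extra hypothesis $P_0^*\|f-P_0f\|_\F^2<\infty$ (a square-integrable envelope) enters.

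The step I expect to be the main obstacle is the Dirichlet Bernstein--von Mises input: it has to be used with a base measure $\beta_n$ that is itself random (the rescaled empirical measure) and whose limit $\bar P_0$ carries atoms --- the large atoms of $P_0^d$ contribute weights that are non-negligible yet only asymptotically Gaussian, so one cannot appeal to any ``fixed atomless base measure'' version. Making this Dirichlet limit uniform enough in the base measure to hold for $P_0$-almost every data sequence, and simultaneously uniform over a Donsker class $\F$, is the real work; the remainder is bookkeeping of constants and of the three independence structures.
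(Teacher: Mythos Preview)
Your proposal is correct and follows the paper's strategy closely: the same Pitman representation, the same three-way split into a Beta fluctuation, a Dirichlet piece, and a residual Pitman--Yor piece, and the same limit theorems for each (your $R$ is the paper's $1-R_n$; your centering at $\EE R$ differs from the paper's centering at $1-\sigma K_n/n$ by $O(1/n)$, as you note).

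The one substantive difference is in the Dirichlet piece. You treat it as a black-box ``Dirichlet BvM'' with random atomic base measure $\beta_n$, and then decompose the limiting $\GG_{\bar P_0}$ via the covariance identity $\GG_{\pi\mu_1+(1-\pi)\mu_2}\stackrel{d}{=}\sqrt\pi\,\GG_{\mu_1}+\sqrt{1-\pi}\,\GG_{\mu_2}+\sqrt{\pi(1-\pi)}(\mu_1-\mu_2)Z$; this is an elegant route to the finite-dimensional limit and directly produces the $Z_2$ term. The paper instead proves this piece from scratch (its Lemma~6): it represents each Dirichlet weight via independent gammas and \emph{relabels} these gammas by observation index rather than distinct-value index, so that $S_nf=\bigl(n^{-1}\sum_{i=1}^n\xi_{n,i}f(X_i)\bigr)\big/\bigl(n^{-1}\sum_i\xi_{n,i}\bigr)$ becomes a multiplier empirical process on the original sample, with multipliers of just two types ($\Gamma(1-\sigma,1)$ for first occurrences, $\Gamma(1,1)$ for repeats). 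Lindeberg--Feller then gives the marginal limit as an ``unbridged'' Gaussian process $\WW$ with $\cov(\WW f,\WW g)=T(fg)$, and the ratio structure yields $\WW-\tfrac{1}{1-\sigma\lambda}\WW1\cdot T$, which is the same object you obtain via your mixture decomposition. The payoff of the paper's relabeling is in the Donsker-class upgrade: because the process is now a multiplier process indexed by $X_1,\dots,X_n\iid P_0$, asymptotic tightness follows from the conditional multiplier CLT under only the $P_0$-Donsker hypothesis (after a one-line coupling inequality reducing the two-type multipliers to i.i.d.\ ones). This is precisely the device that fills the gap you correctly flag as ``the real work'' --- the uniform-in-base-measure Dirichlet BvM you would otherwise need is not off the shelf, and your route through ``$\bar P_0$-Donsker via absolute continuity'' handles the limit measure but not the sequence $\beta_n$.
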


The proof of the theorem is deferred to Section~\ref{SectionProofMain}. The condition that the 
$\PY\left( \sigma, \sigma, G \right)$ process satisfies the central limit theorem in $\ell^\infty\left( \mathcal{F} \right)$, is satisfied,
for instance, for all classes $\F$ that are suitably measurable with finite uniform entropy integral and for all classes $\F$
with finite $G$-bracketing integral. This follows from Theorems~2.11.9 or 2.11.1 in \cite{WCEP}.

The limit process in the theorem is Gaussian, but it is the $P_0$-Brownian bridge $\GG_{P_0}$ only if $\lambda=0$, i.e.\
if $P_0=P_0^d$ is discrete. In addition, the behaviour of the Pitman-Yor posterior deviates from  the ``desired''
behaviour by the presence on the left side of the term 
\begin{equation}
\label{EqBias}
\sqrt n\, B_n(f):=\frac{\sigma K_n}{\sqrt n}    (G    -      \tilde\PP_n).
\end{equation}
Given the observations $X_1,\ldots, X_n$ this term is deterministic, and we can only expect it to disappear if 
$K_n/\sqrt n$ tends to zero. While $K_n/n\ra 0$ almost surely for any discrete distribution $P_0$, 
the more stringent convergence to zero of
$K_n/\sqrt n$ is valid only if the sizes of the atoms of $P_0$ decrease fast enough. This relationship was made
precise in \cite{Karlin1967} (also see the corollary below) in terms of the function
\begin{equation}
\a_0(u)=\#\{ x: 1/P_0\{x\}\le u\}.
\label{EqDefinitionAlpha}
\end{equation}
If $\alpha_0$ is regularly varying at $u=\infty$ (in the sense of Karamata, see e.g.\ \cite{Binghametal} or the appendix to \cite{deHaan})
with exponent $\g_0\in(0,1)$, then $K_n/\a_0(n)\ra \Gamma(1-\g_0)$, almost surely, and $\a_0(n)$ is $n^{\g_0}$ up to a slowly varying
factor. In this case, for $K_n/\sqrt n$ to tend to zero, it is necessary that the exponent be smaller  than 1/2 and sufficient that  it is strictly smaller  than 1/2.
For instance, if the ordered atoms $P_0\{x_j\}$ of $P_0$  decrease proportionally to $1/j^{\a}$, then  $K_n/\sqrt n\ra 0$ 
in probability if and only if $\a>2$.

For bounded functions $f$, the convergence $K_n/\sqrt n\ra 0$ is also enough to drive the additional term \eqref{EqBias}
to zero, as the terms $(G-\tilde \PP_n)f$ will remain bounded in that case.
For unbounded functions $f$, a still more stringent condition on $P_0$ is needed to make the term $(K_n/\sqrt n)\tilde \PP_nf$ go away.
For instance, for the posterior mean of a distribution on $\NN$ with atoms $P_0\{j\}$ of the order $1/j^{\a}$,
the next corollary implies that $\a>4$ is needed.

We conclude that  for a  large class of discrete distributions $P_0$, but not all, the Bernstein-von Mises theorem takes 
its standard form, and this also depends on which aspect of the posterior distribution we are interested in.

\begin{corollary}
\label{CorOne}
Under the conditions of Theorem~\ref{thm:PYBVM}, if $P_0$ is a discrete probability distribution, then 
$\sqrt{n} \bigl( P -  \PP_n  - (\sigma K_n/n)(G - \tilde\PP_n)\bigr) | X_1, \ldots, X_n \rightsquigarrow \GG_{P_0}$
in $\ell^\infty ( \F)$, in probability or almost surely. 
\begin{itemize}
\item[(i)] If the class of functions $\F$ is uniformly bounded and 
the atoms $\{x_j\}$  of $P_0$ satisfy $P_0\{x_j\}\le C j^{-\a}$, for some constants $C$ and $\a>2$,
then also
$\sqrt{n} ( P -  \PP_n) | X_1, \ldots, X_n \rightsquigarrow \GG_{P_0}$, in probability.
If the class of functions $\F$ is uniformly bounded and 
the function $u\mapsto \a_0(u)=\#\{ x: 1/P_0\{x\}\le u\}$  is regularly varying at $u=\infty$ of exponent strictly smaller than 1/2, 
then this is also true almost surely.
\item[(ii)] If the atoms $\{x_j\}$ of $P_0$ and the function $f$ satisfy $P_0\{x_j\}\le C j^{-\a}$
and $f(x_j)\asymp j^p$, for some $p>0$,  then $\sqrt{n} ( P -  \PP_n)f | X_1, \ldots, X_n \rightsquigarrow \GG_{P_0}f$, in probability
if $\a>2p+2$.
\end{itemize}
\end{corollary}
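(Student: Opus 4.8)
The plan is to reduce everything to Theorem~\ref{thm:PYBVM}. The leading assertion is simply the case $\lambda=0$: on the right-hand side of the theorem every term other than $\sqrt{1-\lambda}\,\GG_{P_0^d}$ carries a factor $\sqrt\lambda$ (and every term involving the---then arbitrary---atomless part $P_0^c$ carries such a factor as well), so when $\lambda=0$ the limit is well defined irrespective of the choice of $P_0^c$ and collapses to $\GG_{P_0^d}=\GG_{P_0}$. This yields the in-probability statement at once; for the almost sure statement one invokes the last sentence of Theorem~\ref{thm:PYBVM} together with the observation that a uniformly bounded class $\F$ has envelope bounded by a constant and hence trivially satisfies $P_0^*\|f-P_0f\|_\F^2<\infty$.

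For parts~(i) and~(ii) it then remains only to show that the deterministic (given $X_1,\ldots,X_n$) bias process $\sqrt n\,B_n=(\sigma K_n/\sqrt n)(G-\tilde\PP_n)$ of \eqref{EqBias} vanishes---in $\|\cdot\|_\F$ for~(i), evaluated at $f$ for~(ii)---in probability (and almost surely for the second half of~(i)). Granting this, the claims follow from the bias-corrected convergence above by a conditional Slutsky argument: if $Y_n\given X_1,\ldots,X_n\rightsquigarrow Y$ and $c_n$ is a measurable function of $X_1,\ldots,X_n$ with $\|c_n\|_\F\to 0$ (in probability, resp.\ almost surely), then $Y_n+c_n\given X_1,\ldots,X_n\rightsquigarrow Y$ in the same mode. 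This is immediate from the bounded-Lipschitz characterisation, since $|h(z_1)-h(z_2)|\le\|z_1-z_2\|_\F$ for $h\in\mathrm{BL}_1$ gives $\bigl|\EE(h(Y_n+c_n)\given X_1,\ldots,X_n)-\EE(h(Y_n)\given X_1,\ldots,X_n)\bigr|\le\|c_n\|_\F$, uniformly in $h$.

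The core of the argument is an elementary estimate of $\EE K_n$ and of $\EE\sum_{i=1}^{K_n}|f(\tilde X_i)|$. Writing $p_j=P_0\{x_j\}$, the atom $x_j$ occurs in the sample with probability $1-(1-p_j)^n$, so $\EE K_n=\sum_j\bigl(1-(1-p_j)^n\bigr)$ and $\EE\sum_{i=1}^{K_n}|f(\tilde X_i)|=\sum_j|f(x_j)|\bigl(1-(1-p_j)^n\bigr)$. Using $1-(1-p)^n\le\min(1,np)$ and splitting each series at the index $j^*$ with $np_{j^*}\asymp1$, i.e.\ $j^*\asymp n^{1/\a}$, the bound $p_j\le Cj^{-\a}$ yields $\EE K_n\lesssim n^{1/\a}$ and, when additionally $f(x_j)\asymp j^p$, $\EE\sum_{i=1}^{K_n}|f(\tilde X_i)|\lesssim n^{(p+1)/\a}$ (the tail sum converging because $\a>p+1$, which is implied by the hypotheses). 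Dividing by $\sqrt n$ and applying Markov's inequality, $K_n/\sqrt n\to0$ in probability once $\a>2$, and $(K_n/\sqrt n)\tilde\PP_n|f|=n^{-1/2}\sum_{i=1}^{K_n}|f(\tilde X_i)|\to0$ in probability once $(p+1)/\a<1/2$, that is $\a>2p+2$. For a uniformly bounded $\F$ one has $\|(G-\tilde\PP_n)f\|_\F\le2\sup_{f\in\F}\|f\|_\infty<\infty$, hence $\|\sqrt n\,B_n\|_\F\le2\sigma\bigl(\sup_{f\in\F}\|f\|_\infty\bigr)K_n/\sqrt n\to0$ in probability, which gives the first display of~(i). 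For~(ii), $\sqrt n\,B_n(f)=(\sigma K_n/\sqrt n)Gf-(\sigma K_n/\sqrt n)\tilde\PP_nf$, where $|Gf|\le(Gf^2)^{1/2}<\infty$ by the moment hypothesis of Theorem~\ref{thm:PYBVM}, so the first piece tends to zero because $\a>2p+2>2$, and the second tends to zero by the estimate above; thus $\sqrt n\,B_n(f)\to0$ in probability and~(ii) follows.

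For the almost sure part of~(i) the mean bound does not suffice, and instead I would use Karlin's strong law quoted before the corollary: if $\a_0$ is regularly varying at infinity of exponent $\g_0\in(0,1/2)$, then $K_n/\a_0(n)\to\Gamma(1-\g_0)$ almost surely while $\a_0(n)=n^{\g_0}L(n)$ with $L$ slowly varying, so $K_n/\sqrt n=n^{\g_0-1/2}L(n)\,(K_n/\a_0(n))\to0$ almost surely (the boundary case $\g_0=0$ being analogous); combined with the almost sure bias-corrected convergence---valid since a uniformly bounded $\F$ meets the extra moment condition---this gives the almost sure conclusion. I expect the only genuine work to be the sum-splitting estimate of the third paragraph together with the bookkeeping that its exponents are exactly the stated thresholds $\a>2$ and $\a>2p+2$; the remainder is routine plumbing on top of Theorem~\ref{thm:PYBVM}.
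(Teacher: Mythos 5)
Your proposal is correct and follows essentially the same route as the paper: specialise Theorem~\ref{thm:PYBVM} to $\lambda=0$, then kill the bias term via a first-moment bound on $K_n$ and on $\sum_{i\le K_n}|f(\tilde X_i)|$ using $1-(1-p)^n\le np\wedge 1$ (yielding the thresholds $\a>2$ and $\a>2p+2$), with Karlin's strong law for the almost-sure half of (i). The only difference is cosmetic — you estimate the key series by splitting at $j^*\asymp n^{1/\a}$ where the paper compares it to an integral and substitutes — and your explicit conditional Slutsky step and treatment of the $Gf$ term in (ii) are if anything slightly more complete than the paper's.
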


\begin{proof}
The first assertion merely specializes the limit in Theorem~\ref{thm:PYBVM} to the case
of a discrete distribution, by setting $\lambda=0$. Assertions (i) and (ii) follow from this
if the term \eqref{EqBias} tends to zero, in probability or almost surely.

For bounded functions $f$, as assumed in (i), the term \eqref{EqBias} tends to zero provided $K_n/\sqrt n$
tends to zero. The almost sure convergence is immediate from \cite{Karlin1967}, Theorems~9 and~1`, which show that
$K_n/\a_0(n)\ra \Gamma(1-\g_0)$, almost surely, for $\g_0$ the exponent of regular variation.
For the convergence in probability, we note that $K_n=\sum_{j=1}^\infty 1_{j\in \{X_1,\ldots, X_n\}}$, whence
$\EE K_n=\sum_{j=1}^\infty \bigl(1-(1-P_0\{x_j\})^n\bigr)$. By the inequality $(1-p)^n\ge 1-np$, for $p\ge 0$,
we find that $\EE K_n\le \sum_{j=1}^\infty(nCj^{-\a}\wedge 1)$, which can be seen to be $o(\sqrt n)$ if $\a>2$.

The assertion in (ii)  follows provided $K_n/\sqrt n\ra 0$ and $(K_n/\sqrt n)\tilde\PP_nf\ra 0$, in probability.
Reasoning as before, we find
$$\EE \Bigl(\frac{K_n}{\sqrt n}\tilde\PP_nf\Bigr)=\frac 1{\sqrt n}\sum_{j=1}^\infty f(x_j)\bigl(1-(1-P_0\{x_j\})^n\bigr).$$
Under the given assumptions on $f$ and the atoms, this is bounded above by
$$\frac 1{\sqrt n}\int_{C^{1/a}}^\infty u^p\bigl(1-\Bigl(1-\frac C{u^\a}\Bigr)^n\Bigr)\,du
=\frac{n^{(p+1)/\a}}{\sqrt n}\int_{C/n}^\infty v^{(p+1)/\a-1}\Bigl(1-\Bigl(1-\frac C{nv}\Bigr)^n\Bigr)\,dv.$$
The integrand is bounded above by $Cv^{(p+1)/\a-1}$ and hence the integral converges near 0.
By again the inequality $(1-p)^n\ge 1-np$, for $p\ge 0$,  the integrand is also bounded
above by $v^{(p+1)/\a-2}$ and hence the integral converges near infinity if $(p+1)/\a<1$.
The middle part of the integral always gives a non-vanishing contribution and hence the full
expression can tend to zero only if the leading factor tends to zero. This is true
under the more stringent condition that  $(p+1)/\a<1/2$.
\end{proof}

For $\lambda=1$ and $P_0=P_0^c$, Theorem~\ref{thm:PYBVM}  was obtained by \cite{James2008}. 
In this case all observations are distinct and the
left side of the theorem reduces to $\sqrt n \bigl(P-(1-\sigma)\PP_n-\sigma G\bigr)$, since $K_n=n$. 
As noted in the introduction, the posterior distribution is not even consistent, i.e.\ 
the asymptotic limit is ``wrong'' even without the $\sqrt n$ multiplier.

The Bernstein-von Mises theorem is important for the validity of credible sets. A credible interval for 
$Pf$, for a given function $f$, could be formed as the interval between two quantiles of the marginal posterior distribution
of $Pf$ given $X_1,\ldots, X_n$.  For instance, for $f$ equal to the indicator of a given set $A\in\A$, this gives a credible interval
for a probability $P(A)$, and for $f(x)=x$, we obtain a credible interval for the mean. Simultaneous credible sets,
for instance a credible band for a distribution function can be obtained similarly.

By the inconsistency of the posterior distribution in the case that the
true distribution possesses a continuous component ($\lambda >0$), there is no hope that in this case such an
interval for $Pf$ will cover a true value $P_0f$ with the desired probability. However, also in the
case of a discrete distribution $P_0$, the coverage may not tend to the nominal value,
due to the presence of the bias term \eqref{EqBias}. We need at least that $K_n/\sqrt n$ tends to
zero, and more for unbounded functions $f$.

Because the bias $B_n(f)=(\s K_n/n)(Gf-\tilde \PP_nf)$ is observed (and $\s$ and the center measure $G$ are fixed by our
prior choices), it is possible to correct a credible interval by shifting it by minus this amount.
Thus for $Q_{n,\a}(f)$ the $\a$-quantile of the posterior distribution of $Pf$ given $X_1,\ldots, X_n$,
we consider both the credible intervals
$\bigl[Q_{n,\a}(f), Q_{n,\b}(f)\bigr]$ and corrected intervals $\bigl[Q_{n,\a}(f)-B_n(f), Q_{n,\b}(f)-B_n(f)\bigr]$,
for given $\a<\b$.

\begin{corollary}
\label{CorCredible}
Under the conditions of Theorem~\ref{thm:PYBVM}, if $P_0$ is a discrete probability distribution, then 
$\Pr_{P_0}\bigl(Q_{n,\a}(f)-B_n(f)\le P_0f\le  Q_{n,\b}(f)-B_n(f)\bigr)\rightarrow \b-\a$, for every
$f$ with $(P_0+G)f^2<\infty$. If $\sqrt n B_n(f)\ra 0$, in probability, then also
$\Pr_{P_0}\bigl(Q_{n,\a}(f)\le P_0f\le  Q_{n,\b}(f)\bigr)\rightarrow \b-\a$, for every such $f$.
For bounded functions $f$, the latter is true if the atoms of $P_0$ satisfy
$P_0\{x_j\}\le C j^{-\a}$, for some constants $C$ and $\a>2$. For $f(x)=x$, this is true for $\a>4$.
\end{corollary}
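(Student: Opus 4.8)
The plan is to reduce everything to a one-dimensional statement by applying Theorem~\ref{thm:PYBVM} (equivalently, the first assertion of Corollary~\ref{CorOne}) to the singleton class $\F=\{f\}$, for which the hypotheses collapse to $(P_0+G)f^2<\infty$. Since $P_0$ is discrete, $\lambda=0$, and the limit reduces to $\GG_{P_0}f\sim N(0,\s_f^2)$ with $\s_f^2=P_0(f-P_0f)^2$, which we may assume positive (the degenerate case being of no interest). Hence the conditional law of $\sqrt n\bigl(Pf-\PP_nf-B_n(f)\bigr)$ given $X_1,\ldots,X_n$ converges weakly to $N(0,\s_f^2)$, $P_0^\infty$-almost surely. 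Because the limiting distribution function $t\mapsto\Phi(t/\s_f)$ is continuous and strictly increasing, pointwise convergence of the conditional distribution functions upgrades, on a single probability-one event, to uniform convergence, hence to convergence of the conditional quantiles. Since $\PP_nf$ and $B_n(f)$ depend on the data only, the posterior $\a$-quantile of $Pf$ satisfies
\[
Q_{n,\a}(f)=\PP_nf+B_n(f)+\frac{\s_f\,z_\a}{\sqrt n}+o\bigl(n^{-1/2}\bigr),\qquad P_0^\infty\text{-a.s.},
\]
with $z_\a=\Phi^{-1}(\a)$, and likewise for $\b$.

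Next I would insert this expansion into the coverage probability. Writing $T_n=\sqrt n(\PP_nf-P_0f)$, the event $\{Q_{n,\a}(f)-B_n(f)\le P_0f\le Q_{n,\b}(f)-B_n(f)\}$ equals $\{-\s_f z_\b-\eta_n\le T_n\le-\s_f z_\a-\xi_n\}$ with $\xi_n,\eta_n=o_P(1)$. The central limit theorem gives $T_n\weak\s_f Z$ for a standard normal $Z$ (here $P_0f^2<\infty$ is used), so by Slutsky and the portmanteau theorem (the limit law of $T_n$ is continuous, so the two relevant boundary values carry no mass) the coverage converges to $\Pr\bigl(-\s_f z_\b\le\s_f Z\le-\s_f z_\a\bigr)=\Phi(-z_\a)-\Phi(-z_\b)=(1-\a)-(1-\b)=\b-\a$, using $\a<\b$ and the symmetry of $\Phi$. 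For the uncorrected interval the identical argument applies with $T_n$ replaced by $T_n+\sqrt n\,B_n(f)$; when $\sqrt n\,B_n(f)\to0$ in probability this extra term is absorbed into the $o_P(1)$ remainder and the limit is again $\b-\a$.

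It remains to check $\sqrt n\,B_n(f)=(\s K_n/\sqrt n)(Gf-\tilde\PP_nf)\to0$ in probability under the two stated regimes. For uniformly bounded $f$ we have $|Gf-\tilde\PP_nf|\le2\|f\|_\infty$, so it suffices that $K_n/\sqrt n\to0$ in probability, which follows from the estimate $\EE K_n\le\sum_j(nCj^{-\a}\wedge1)=o(\sqrt n)$ already carried out in the proof of Corollary~\ref{CorOne}(i) when $\a>2$. For $f(x)=x$ on $\NN$, split $\sqrt n\,B_n(f)=\s(K_n/\sqrt n)Gf-\s(K_n/\sqrt n)\tilde\PP_nf$; the first term vanishes because $Gf<\infty$ (implied by $Gf^2<\infty$) and $K_n/\sqrt n\to0$, while $\EE\bigl((K_n/\sqrt n)\tilde\PP_nf\bigr)=n^{-1/2}\sum_j f(x_j)\bigl(1-(1-P_0\{x_j\})^n\bigr)\to0$ when $\a>4$ by the integral estimate in the proof of Corollary~\ref{CorOne}(ii) with $p=1$, and Markov's inequality (the summand being nonnegative) converts this into convergence in probability. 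Feeding these into the second paragraph gives the last two assertions.

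The main obstacle is the passage in the first paragraph from the bounded-Lipschitz convergence of the posterior law supplied by Theorem~\ref{thm:PYBVM} to the quantile expansion, i.e.\ verifying that the $o(n^{-1/2})$ remainder is negligible on a single full-probability event simultaneously for the quantile levels $\a$ and $\b$; once that expansion is in hand, the remainder is a routine Slutsky/portmanteau computation combined with the tail bounds already established for Corollary~\ref{CorOne}.
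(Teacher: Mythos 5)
Your proposal is correct and follows essentially the same route as the paper: expand the posterior quantile as $Q_{n,\a}(f)=\PP_nf+B_n(f)+\tau(f)\xi_\a/\sqrt n+o_P(n^{-1/2})$ via the Bernstein--von Mises limit, then apply the classical CLT to $\sqrt n(\PP_nf-P_0f)$, absorbing $B_n(f)$ into the remainder when $\sqrt nB_n(f)\to0$, and invoke the bias estimates of Corollary~\ref{CorOne} for the last two assertions. Your extra care in justifying the quantile convergence (Polya-type uniformity for a continuous, strictly increasing limit cdf) fills in a step the paper leaves implicit, and is fine.
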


\begin{proof}
The $\a$-quantile $Q_{n,\a}(f)$ of the posterior distribution of $Pf$ is equal to $n^{-1/2}\bar Q_{n,\a}(f)+\PP_nf+B_n(f)$, for
$\bar Q_{n,\a}(f)$ the $\a$-quantile of the posterior distribution of $\sqrt n\bigl(Pf-\PP_nf-B_n(f)\bigr)$. 
By Theorem~\ref{thm:PYBVM}, the latter posterior distribution tends
to a normal distribution with mean zero and variance $\tau^2(f)=\var \GG_{P_0}f$. It follows that
$$Q_{n,\a}(f)=\PP_nf+B_n(f)+\frac{\tau(f)}{\sqrt n}\xi_\a+o_P\Bigl(\frac1{\sqrt n}\Bigr),$$
where $\xi_\a$ is the $\a$-quantile of the standard normal distribution.
Thus the event $P_0f\ge  Q_{n,\a}(f)-B_n(f)$ can be rewritten as
$P_0f\ge\PP_nf+\tau(f)/\sqrt n \,\xi_\a+o_P(n^{-1/2}$. The probability of the latter event tends to
tends to $1-\a$, by the central limit theorem applied to $\sqrt n (\PP_nf-P_0f)$.

If $\sqrt n B_n(f)$ tends to zero in probability, then in the preceding display $B_n(f)$ can be incorporated
into the $o_P(n^{-1/2})$ remainder term, and the remaining argument works for the uncorrected interval as well.

The final assertions follow from Corollary~\ref{CorOne}.
\end{proof}


\begin{example}\label{Example_of_noncoverage}
The following explicit counterexample illustrates that the coverage can fail.
Let $G $ be the normal distribution with both mean and variance $1$, let $P_0 = \sum_{j = 1}^\infty p_j \delta_j$, for $p_j = {6}/{(\pi j)^2}$, 
and consider the function $f = \II_{(1,\infty)}-\II_{(-\infty,1]}$. Since $Gf = 0$, we get 
$ (\sigma K_n/\sqrt n) (G - \tilde\PP_n)f = (\sigma/\sqrt{n}) \sum_{i = 1}^{K_n} f(\tilde{X}_i)$.
Eventually the atom $\{1\}$ will be among the observations. Since $f(1)=-1$ and $f(j)=1$ for all atoms $j\ge 2$,
$(\sigma K_n/\sqrt n) (G - \tilde\PP_n)f= (\sigma/\sqrt{n}) ( -1 + (K_n - 1))  \rightarrow \sigma \sqrt{6/\pi}$, almost surely,
by~\cite{Karlin1967}, Theorem~8, Theorem~1' and Example~4.  
The coverage of the uncorrected interval $\bigl[Q_{n,\a}(f), Q_{n,\b}(f)\bigr]$ will tend to $\Phi(-\xi_\a-\sigma \sqrt{6/\pi})
-\Phi(-\xi_\b-\sigma \sqrt{6/\pi})$.
\end{example}

The joint convergence in collections of functions $f$ allows to study simultaneous credible sets and credible bands, besides univariate intervals.
For instance, in the case that the sample space is the real line, we can take $\F$ equal to the set of all indicators of cells $(-\infty,t]$,
and obtain a credible band for the distribution function $F_0(t)=P_0(-\infty,t]$, as follows. 
Let $F(t)=P(-\infty,t]$ be the  distribution function of the posterior process, and for $m_n(t)$ and $s_n(t)$ two functions
dependent on $X_1,\ldots, X_n$, let $\xi_{n,\a}$ be the $\a$-quantile
of the posterior distribution of $\sup_{t\in\RR}\bigl|(F(t)-m_n(t))/s_n(t)\bigr|$. Consider the credible band of functions
$$C_n(\a):=\bigl\{F:  m_n(t)-\xi_{n,1-\a} s_n(t)\le F(t)\le m_n(t)+\xi_{n,1-\a} s_n(t), \forall t\bigr\}.$$
Possible choices for the functions $m_n$ and $s_n$ are the pointwise posterior mean $m_n(t)=\EE\bigl(F(t)\given X_1,\ldots, X_n\bigr)$
and the pointwise posterior standard deviation $s_n(t)=\sd\bigl(F(t)\given X_1,\ldots, X_n\bigr)$. The quantiles $\xi_{n,\a}$
will typically be computed approximately from an MCMC sample from the posterior distribution, or approximated using
tables for the limiting Brownian bridge process.

\begin{corollary}
If $P_0$ is a discrete probability distribution with  atoms such that
$P_0\{x_j\}\le C j^{-2-\e}$, for some constants $C$ and $\e>0$, then 
$\Pr_{P_0}\bigl(F_0\in C_n(\a)\bigr)\rightarrow 1-2\a$.
\end{corollary}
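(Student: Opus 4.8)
The plan is to deduce this corollary from Theorem~\ref{thm:PYBVM} and Corollary~\ref{CorOne}(i) applied to the Donsker class $\F=\{\II_{(-\infty,t]}:t\in\RR\}$ of indicators of cells on the real line. This class is the classical example of a $P_0$-Donsker class, is uniformly bounded, is suitably measurable, and has a finite uniform entropy integral, so the $\PY(\sigma,\sigma,G)$ process satisfies the central limit theorem over it by Theorem~2.11.9 in \cite{WCEP}; moreover $P_0^*\|f-P_0f\|_\F^2\le 1<\infty$. Since $P_0$ is discrete with $P_0\{x_j\}\le Cj^{-2-\e}$, the function $\a_0$ of \eqref{EqDefinitionAlpha} is regularly varying at infinity with exponent $\g_0\le 1/(2+\e)<1/2$ (one needs to check that the polynomial upper bound on the atoms translates into regular variation of $\a_0$ with exponent at most $1/(2+\e)$; this is where a small argument is required, using that $\#\{j:P_0\{x_j\}\ge 1/u\}\le \#\{j:Cj^{-2-\e}\ge 1/u\}\asymp u^{1/(2+\e)}$ together with the fact that $\sum_j P_0\{x_j\}=1$ forces a matching lower order). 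Hence Corollary~\ref{CorOne}(i) applies and gives, $P_0^\infty$-almost surely,
\[
\sqrt n\,(F-\PP_n)\,\big|\,X_1,\ldots,X_n\rightsquigarrow \GG_{P_0}\quad\text{in }\ell^\infty(\F),
\]
with $F(t)=P(-\infty,t]$ the posterior distribution function and $\PP_n$ the empirical distribution function.

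Next I would transfer this conditional weak convergence through the (random, data-dependent) maps $z\mapsto \|z/s_n\|_\F$ that define the band. Write $\G_n:=\sqrt n(F-\PP_n)\,|\,X_1,\ldots,X_n$ and recall $m_n(t)=\EE(F(t)\mid X_1,\ldots,X_n)$, $s_n(t)=\sd(F(t)\mid X_1,\ldots,X_n)$. From the conditional convergence and uniform integrability of the (bounded) process one gets $\sqrt n\,(m_n(t)-\PP_nf_t)\to 0$ uniformly in $t$ and $n\,s_n^2(t)\to \tau^2(f_t):=\var\GG_{P_0}f_t=F_0(t)(1-F_0(t))$ uniformly in $t$ (here $f_t=\II_{(-\infty,t]}$); I would justify the uniformity by noting the limit Brownian bridge has continuous covariance and the convergence is in $\ell^\infty(\F)$, so the finite-dimensional and tightness parts combine to give uniform control of mean and variance. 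Consequently, conditionally on the data,
\[
\sup_{t}\Bigl|\frac{F(t)-m_n(t)}{s_n(t)}\Bigr|
=\sup_t\Bigl|\frac{\sqrt n(F(t)-\PP_nf_t)+\sqrt n(\PP_nf_t-m_n(t))}{\sqrt n\,s_n(t)}\Bigr|
\rightsquigarrow \sup_t\frac{|\GG_{P_0}f_t|}{\tau(f_t)},
\]
by the continuous mapping theorem for the map $z\mapsto \sup_t |z(t)|/\tau(f_t)$ (continuous on $\ell^\infty(\F)$ off the null set where $\tau(f_t)=0$, which is handled by the standard convention at the endpoints) together with Slutsky-type arguments absorbing the negligible shift and the ratio $s_n(t)\sqrt n/\tau(f_t)\to 1$. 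The limiting law $\sup_t|\GG_{P_0}f_t|/\tau(f_t)$ is continuous (it has no atoms: it is the supremum of a standardized Brownian bridge), so its quantiles are the continuity points needed for the quantile-convergence step.

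Then I would argue, exactly as in the proof of Corollary~\ref{CorCredible}, that the conditional $\a$-quantile $\xi_{n,\a}$ of $\sup_t|(F(t)-m_n(t))/s_n(t)|$ converges in $P_0$-probability to the $\a$-quantile $q_\a$ of $\sup_t|\GG_{P_0}f_t|/\tau(f_t)$. Finally, membership $F_0\in C_n(\a)$ reads $\sup_t |(F_0(t)-m_n(t))/s_n(t)|\le \xi_{n,1-\a}$; replacing $m_n$ by $\PP_nf_t$ (negligible up to $o_P(n^{-1/2})$) and $\sqrt n\,s_n(t)$ by $\tau(f_t)$ (ratio $\to1$ uniformly), this event is asymptotically equivalent to $\sup_t|\GG_n^{\mathrm{emp}}(t)/\tau(f_t)|\le q_{1-\a}+o_P(1)$, where $\GG_n^{\mathrm{emp}}=\sqrt n(\PP_n-P_0)$ is the classical empirical process, which by Donsker's theorem converges to $\GG_{P_0}$ in $\ell^\infty(\F)$. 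Hence $\Pr_{P_0}(F_0\in C_n(\a))\to \Pr\bigl(\sup_t|\GG_{P_0}f_t|/\tau(f_t)\le q_{1-\a}\bigr)=1-\a$. Wait---the stated limit is $1-2\a$, which corresponds to a \emph{two-sided} band of the form $m_n\pm\xi_{n,1-\a}s_n$ read as simultaneous one-sided exceedances; with $\xi_{n,1-\a}$ the $(1-\a)$-quantile of the \emph{signed} supremum $\sup_t (F(t)-m_n(t))/s_n(t)$ on each side (or equivalently using that $C_n(\a)$ as displayed is the complement of the two tail events each of asymptotic probability $\a$), the same computation gives $\Pr_{P_0}(F_0\in C_n(\a))\to 1-2\a$; I would spell out this bookkeeping carefully, matching the convention for $\xi_{n,\a}$ in the displayed definition of $C_n(\a)$.

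The main obstacle is the ``transfer through data-dependent normalization'' step: one must show that the random centering $m_n$ and scaling $s_n$ are, uniformly in $t$ and with $P_0$-probability tending to one, close to the deterministic $\PP_nf_t$ and $\tau(f_t)/\sqrt n$, and that this uniform closeness is compatible with the conditional weak convergence so that the continuous mapping theorem and the quantile-convergence argument go through. This requires controlling moments of the posterior process uniformly over the infinite index set $\F$ --- not merely pointwise --- which is where the $P_0$-Donsker property (tightness of the limit, asymptotic equicontinuity) and the explicit form of the Pitman--Yor posterior from \cite{Pitman1996b} used in Section~\ref{SectionProofMain} must be invoked. Everything else is a routine combination of Slutsky's lemma, the continuous mapping theorem, and Polya-type uniformity of convergence of quantiles for a continuous limit law.
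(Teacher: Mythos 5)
Your proposal follows essentially the same route as the paper: the paper's own proof simply notes that both the classical empirical process and the (bias-free, thanks to Corollary~\ref{CorOne}) posterior process converge to the same Brownian bridge $\GG_U\circ F_0$ over the Donsker class of indicators, and then refers to the quantile-matching argument of Corollary~\ref{CorCredible}; your write-up is a more detailed expansion of exactly that, including an honest identification of the two places where work is needed (uniform control of $m_n$ and $s_n$, and quantile convergence for the sup statistic). Two remarks. First, your detour through regular variation of $\a_0$ is both unnecessary and not actually valid: an upper bound $P_0\{x_j\}\le Cj^{-2-\e}$ gives $\a_0(u)\lesssim u^{1/(2+\e)}$ but does \emph{not} force $\a_0$ to be regularly varying (and $\sum_j P_0\{x_j\}=1$ does not force a matching lower order --- the atoms may decay arbitrarily fast or irregularly), so you cannot claim the almost-sure version of Corollary~\ref{CorOne}(i); you should instead invoke only its in-probability part, which follows directly from the polynomial bound via $\EE K_n=o(\sqrt n)$ and is all that a statement about coverage probabilities requires. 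Second, the $1-\a$ versus $1-2\a$ discrepancy you flagged is genuine: with $\xi_{n,1-\a}$ the $(1-\a)$-quantile of the \emph{absolute} supremum as defined in the paper, the limit of the coverage is $1-\a$, and the stated $1-2\a$ corresponds to the two-sided convention of Corollary~\ref{CorCredible} with $\b=1-\a$; the paper's own one-line proof does not resolve this bookkeeping either, so your explicit caveat is appropriate rather than a defect of your argument.
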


\begin{proof}
Because the class of indicator functions is Donsker,  both the classical empirical process
$\{\sqrt n(\FF_n-F_0)(t): t\in\RR\}$ and the posterior empirical process  $\{\sqrt n (F-\FF_n)(t): t\in\RR\}\given X_1,\ldots,X_n$
tend to the process $\GG_U\circ F_0$, for $\GG_U$ a standard (classical)  Brownian bridge process, 
by Theorem~\ref{thm:PYBVM}. The result follows from this along the same lines
as the proof of Corollary~\ref{CorCredible}.
\end{proof}

The bias term \eqref{EqBias} vanishes as $\sigma\downarrow 0$, which is in agreement with the fact that in this case the
Pitman-Yor prior approaches the Dirichlet prior, which is well known to give asymptotically correct inference for
any distribution $P_0$. The bias term increases with $\s$, which is counterintuitive, as the bias appears only for
heavy-tailed $P_0$ (having many large atoms), while large $\sigma$ gives more different atoms in the prior. 

One might hope that a data-dependent choice of $\sigma$ could solve this bias problem. The empirical Bayes
method is to estimate $\s$ by the maximum likelihood estimator based on observing $X_1,\ldots,X_n$ in the
Bayesian model, i.e.\ the maximiser of the marginal likelihood, and plug this into the posterior distribution
of $P$ for known $\s$. The hierarchical Bayes method is to put
a prior on $\s$, and given $\s$, put the Pitman-Yor prior on $P$. 
Disappointingly, these methods do not change the limit behaviour of the posterior distribution of $P$. 
This is explained by the fact that these methods yield a reasonable estimator of a value of $\sigma$
connected to the discreteness of the true distribution $P_0$, and we already noted the
counterintuitive fact that a better match of discreteness does not solve the bias problem, but
even makes it worse.

A sample $X_1,\ldots, X_n$ from a realisation of the Pitman-Yor process induces a (random) partition
of the set $\{1, 2,\ldots, n\}$ through the equivalence relation $i\equiv j$ if and only if $X_i=X_j$.
An alternative way to generate the sample is to generate first the partition and next attach
to each set in the partition a value generated independently from the center measure $G$
(see e.g.\ \cite{FNBI}, Lemma~14.11 for a precise statement), duplicating this as many times as there are indices in the set, in order
to form the observations $X_1,\ldots, X_n$.
Because the parameter $\s$ enters only in creating the
partition, the partition is a sufficient statistic for $\s$. Because of exchangeability, the vector
$(N_{n,1},\ldots, N_{n,K_n})$ of cardinalities of the partitioning sets is already sufficient for $\s$ and hence
the empirical Bayes estimator and posterior distribution of $\s$ based on observations $(X_1,\ldots,X_n)$
or on  observations $(K_n,N_{n,1},\ldots, N_{n,K_n})$ are the same. 

The likelihood function for $\s$ is therefore equal to the probability of a particular partition,
called the exchangeable partition probability function (EPPF). For the Pitman-Yor process this is
given by (see \cite{Pitman1996b}, or \cite[page 465]{FNBI})
\begin{equation}
\label{EqLikelihoodSigma}
p_\s(N_{n,1},\ldots, N_{n,K_n})=\frac{\prod_{i=1}^{K_n-1}(M+i\s)}{(M+1)^{[n-1]}}\prod_{j=1}^{K_n}(1-\s)^{[N_{n,j}-1]}.
\end{equation}
Here $a^{[n]}=a(a+1)\cdots(a+n-1)$ is the ascending factorial, with $a^{[0]}=1$ by convention. For the case that $M=0$,
it is shown in \cite{favaro2021nearoptimal},
that provided the partition is nontrivial ($1<K_n<n$), the maximiser $\hat\s_n$ of this likelihood exists.
Moreover, if the true distribution $P_0$ is discrete, with atoms satisfying, for $\a_0(u)=\#\{ x: 1/P_0\{x\}\le u\}$ 
and some $\s_0\in(0,1)$,
\begin{equation*}
\sup_{u>1} \frac{|\alpha_0(u)-Lu^{\s_0}|}{\sqrt{u^{\s_0}\log(eu)}}<\infty,
\end{equation*}
then \cite{favaro2021nearoptimal} shows that the maximum likelihood estimator satisfies $\hat\s_n={\s_0}+O_P(n^{-{\s_0}/2}\sqrt{\log n})$. Thus the
coefficient of regular variation ${\s_0}$ may be viewed as a true value of $\s$, identified by the maximum
likelihood estimator. 

For the following theorem we need only the consistency of $\hat\s_n$, which we prove in Section~\ref{SectionEstimatingType}
for general $M$, under the condition that $\a_0$ is regularly varying. 
We also consider the full Bayes approach, and show that 
the posterior distribution of $\s$ concentrates asymptotically around the empirical likelihood estimator,
and hence contracts to $\s_0$, under the same condition.

\begin{theorem}
\label{thm:PYBVMEstimatedSigma}
Let $P_0 = (1 - \lambda) P_0^d + \lambda P_0^c$ where $P_0^d$ is a discrete  and $P_0^c$ an atomless probability distribution. 
If $\hat\s_n$ are estimators based on $X_1,\ldots, X_n$ such that $\hat\s_n\ra\s_0$ in probability, and $P$ is the
posterior Pitman-Yor process of Theorem~\ref{thm:PYBVM},  then the process
\begin{align*} \sqrt{n} \Bigl(&     P    -    \PP_n    -    \frac{\hat\sigma_n K_n}{n} (G    -   \tilde\PP_n )\Bigr) \Big| \  X_1, \ldots, X_n 
\end{align*}
tends to the same limit process as in Theorem~\ref{thm:PYBVM} with $\s$ replaced by $\s_0$, in probability. If $P_0$ is discrete 
with atoms such that $\a_0$ given in \eqref{EqDefinitionAlpha} is regularly varying of exponent $\s_0\in(0,1)$,
then this is true for the maximum likelihood estimator $\hat \s_n$.  
Furthermore, in this case for $\Pi_\s$ a prior distribution on $\s$ with continuous positive density on $[0,1]$, the posterior distribution of $P$ in the model 
$\s\sim \Pi_\s$, $P\given \s\sim\PY \left( \sigma, M, G \right)$ and $X_1, \ldots, X_n \given P,\s \sim P$ satisfies the assertion
of Theorem~\ref{thm:PYBVM}, with $\s$ in the left side also interpreted as a random posterior variable
and $\s$ in the right side replaced by $\s_0$. Finally if $P_0$ possesses a nontrivial atomless component (i.e.\ $\lambda>0$),
then $\hat\s_n\ra \s_0:=1$.
\end{theorem}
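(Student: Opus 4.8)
The plan is to derive all four assertions from a single strengthening of Theorem~\ref{thm:PYBVM}: a version that holds \emph{locally uniformly in $\s$}, with a limit varying continuously with $\s$. Write $L_\s$ for the Gaussian process on the right-hand side of Theorem~\ref{thm:PYBVM}. The starting point is the conditional representation of the posterior from Pitman's characterisation (Section~\ref{SectionProofs}): given the partition induced by $X_1,\ldots,X_n$, with distinct values $\tilde X_1,\ldots,\tilde X_{K_n}$ and multiplicities $N_{n,1},\ldots,N_{n,K_n}$, one has
\[
P\mid X_1,\ldots,X_n\ \stackrel{d}{=}\ \sum_{j=1}^{K_n}W_j\,\delta_{\tilde X_j}\ +\ R\,Q_\s ,
\]
with $(W_1,\ldots,W_{K_n},R)\sim\Dir(N_{n,1}-\s,\ldots,N_{n,K_n}-\s,M+\s K_n)$ independent of $Q_\s\sim\PY(\s,M+\s K_n,G)$. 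Every quantity here --- hence every estimate in the proof of Theorem~\ref{thm:PYBVM} --- depends on $\s$ only through $N_{n,j}-\s$, $M+\s K_n$ and the law of $Q_\s$, which vary smoothly with $\s$ on compact subsets of $[0,1]$; this is what makes the uniform strengthening feasible.

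\emph{Step 1 (substituting a consistent estimator; the main obstacle).} I would prove that for every compact $I\subset[0,1]$ (one-sided at an endpoint of $[0,1]$),
\[
\sup_{\s\in I}\,d_{\mathrm{BL}}\Bigl(\mathcal L\bigl(\sqrt n\bigl(P-\PP_n-(\s K_n/n)(G-\tilde\PP_n)\bigr)\,\bigm|\,X_1,\ldots,X_n\bigr),\ L_\s\Bigr)\ \longrightarrow\ 0
\]
in probability, where $d_{\mathrm{BL}}$ is the bounded-Lipschitz distance on $\linfty(\F)$, and that $\s\mapsto L_\s$ is $d_{\mathrm{BL}}$-continuous. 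Continuity is immediate, since the coefficients multiplying the Brownian bridges $\GG_{P_0^d},\GG_{P_0^c},\GG_G$ and the normals $Z_1,Z_2$ in Theorem~\ref{thm:PYBVM} are continuous in $\s$. For the uniform convergence I would revisit the proof of Theorem~\ref{thm:PYBVM} and verify uniformity over $\s\in I$ at every step: the conditional central limit theorem for the Dirichlet fluctuations $\sqrt n\sum_j(W_j-\EE W_j)\delta_{\tilde X_j}$ has covariance continuous in $\s$ and remainder uniform over $I$ (since $N_{n,j}-\s\in[N_{n,j}-1,N_{n,j}]$ stays bounded away from $0$); the residual mass $R$ concentrates at $(M+\s K_n)/(M+n)$ uniformly; and the contribution of $R\,Q_\s$ is governed by moment and entropy bounds for $\PY(\s,M+\s K_n,G)$ that are uniform in $\s\in I$. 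Because the same $\s$ enters both the posterior law and the centering, the bias term \eqref{EqBias} needs no separate treatment. Granted the two displayed facts, the usual randomisation argument --- restricting to the event $\{\hat\s_n\in I\}$, of probability tending to $1$ --- shows that for any estimators with $\hat\s_n\to\s_0$ in probability the conditional law with $\s$ replaced by $\hat\s_n$ converges to $L_{\s_0}$ in probability. I expect this step to be the principal difficulty: the more delicate ingredients of the proof of Theorem~\ref{thm:PYBVM}, in particular the weak convergence of the residual Pitman--Yor process and the Donsker-class arguments in $\linfty(\F)$, must be re-examined with bounds that are uniform over a compact $\s$-interval.

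\emph{Step 2 (maximum likelihood) and Step 3 (full Bayes).} For discrete $P_0$ with $\a_0$ from \eqref{EqDefinitionAlpha} regularly varying of exponent $\s_0\in(0,1)$, the consistency $\hat\s_n\to\s_0$ in probability of the maximiser of the EPPF likelihood \eqref{EqLikelihoodSigma} is established in Section~\ref{SectionEstimatingType}; combined with Step~1 this yields the empirical Bayes claim. For the full Bayes claim, the marginal posterior density of $\s$ is proportional to $\pi(\s)\,p_\s(N_{n,1},\ldots,N_{n,K_n})$, which --- also proved in Section~\ref{SectionEstimatingType} --- contracts to $\s_0$; thus for every $\eps>0$ the posterior mass $\Pi(I^c\mid X_1,\ldots,X_n)$ of the complement of the $\eps$-neighbourhood $I$ of $\s_0$ tends to $0$ in probability. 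Writing the posterior law of the centered process as the mixture $\int\mathcal L\bigl(\sqrt n(P-\PP_n-(\s K_n/n)(G-\tilde\PP_n))\mid X_1,\ldots,X_n,\s\bigr)\,d\Pi(\s\mid X_1,\ldots,X_n)$ and bounding its $d_{\mathrm{BL}}$-distance to $L_{\s_0}$ by $\sup_{\s\in I}d_{\mathrm{BL}}(\mathcal L(\cdot\mid X_1,\ldots,X_n,\s),L_\s)+\sup_{\s\in I}d_{\mathrm{BL}}(L_\s,L_{\s_0})+\Pi(I^c\mid X_1,\ldots,X_n)$, the three terms vanish by Step~1, by continuity after shrinking $I$, and by the contraction of the $\s$-posterior, respectively.

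\emph{Step 4 (nontrivial atomless component).} If $\l>0$ then almost surely $K_n/n\to\l$: the component $P_0^c$ contributes $\sim\l n$ observations, almost surely pairwise distinct and distinct from the values produced by $P_0^d$, while the latter produce the only clusters of size at least $2$ and a number $o(n)$ of distinct values, so $r_n:=\#\{j:N_{n,j}\ge 2\}=o(n)$ almost surely. Differentiating the log-EPPF \eqref{EqLikelihoodSigma},
\[
\partial_\s\log p_\s=\sum_{i=1}^{K_n-1}\frac{i}{M+i\s}\ -\ \sum_{j:\,N_{n,j}\ge 2}\ \sum_{k=0}^{N_{n,j}-2}\frac{1}{1-\s+k}.
\]
Uniformly over $\s\in[0,1-\eta_n]$ with $\eta_n:=\sqrt{r_n/n}\to 0$, the first sum is at least $(1-o(1))\l n$ while the second is at most $r_n/\eta_n+\sum_{j:\,N_{n,j}\ge 2}\log N_{n,j}\le\sqrt{n r_n}+r_n\log(n/r_n)=o(n)$; hence for all large $n$ this derivative is strictly positive on $[0,1-\eta_n]$, so $p_\s$ is increasing there and the maximiser must lie in $[1-\eta_n,1)$, giving $\hat\s_n\to 1=:\s_0$. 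Applying Step~1 at $\s_0=1$ with the one-sided neighbourhood $[1-\delta,1)$ --- on which $L_\s$ converges to the finite limit $\sqrt{1-\l}\,\GG_{P_0^d}+\sqrt{\l(1-\l)}\,(P_0^d-G)\,Z_1$ --- then finishes the proof.
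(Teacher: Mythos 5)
Your proposal is correct and follows essentially the same route as the paper: a version of Theorem~\ref{thm:PYBVM} that is uniform in $\s$, combined with $d_{\mathrm{BL}}$-continuity of the limit $\s\mapsto L_\s$, then specialised via the consistency of the empirical Bayes estimator and the contraction of the $\s$-posterior from Section~\ref{SectionEstimatingType}, with the full-Bayes case handled by exactly the mixture bound you write down. The one step you flag as the principal difficulty and leave as a plan --- uniformity of the BvM over a compact $\s$-interval --- is precisely what the paper secures by having formulated Lemmas~\ref{LemOne}--\ref{LemThree} for $n$-dependent type parameters $\s_n\to\s\in[0,1]$ (a sequential formulation equivalent, by a subsequence argument, to your uniform one), so nothing in your outline would fail there. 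The only genuinely different ingredient is Step~4: you prove $\hat\s_n\to1$ directly from the sign of $\partial_\s\log p_\s$ on $[0,1-\eta_n]$ using only $K_n/n\to\lambda$ and $r_n=\#\{j:N_{n,j}\ge2\}=o(n)$ (Lemma~\ref{LemmaConvergenceKn}), whereas the paper's corresponding theorem additionally assumes $\a_0$ regularly varying and controls the second sum as $O_P(\a_0(n))$ via the machinery of Theorem~\ref{ThmSigma}; your bound $r_n/\eta_n+\sum_j\log N_{n,j}=o(n)$ with $\eta_n=\sqrt{r_n/n}$ is more elementary and dispenses with that hypothesis, at the cost of only a marginally weaker localisation of the maximiser, which is all the assertion requires.
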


The proof of the theorem is deferred to Section~\ref{Sectionthm:PYBVMEstimatedSigma}.
The final assertion of the theorem underlines again the deficiency of the Pitman-Yor process for
distributions with a continuous component, which is not solved by estimating the type parameter .
The type estimate tends to type 1 instead of the desired type 0 corresponding to the Dirichlet prior.

Besides the type parameter, the prior precision parameter $M$ could be replaced by a data-dependent
version. However, unlike the type parameter, this prior precision does not appear in the asymptotics of the
posterior distribution of $\sqrt n(P-\PP_n)$. Moreover, inspection of the proof of Theorems~\ref{thm:PYBVM}
and~\ref{thm:PYBVMEstimatedSigma} shows that the convergence in these theorems is uniform in
$M\ll \sqrt{n}$. Thus data-dependent $M$ will not lead to new insights.

In the case of a discrete distribution $P_0$ for which the atoms
decrease too slowly to ensure that $K_n/\sqrt n$ tends to zero, the bias term \eqref{EqBias}
could still tend to zero if $\tilde \PP_n\ra G$. However, we show below that $\tilde\PP_n (A)\ra 0$, for any set $A$ 
that contains only finitely many atoms of $P_0$, and hence such convergence is false in any reasonable sense.
Furthermore, the (in)consistency result \eqref{EqInconsistency} shows that in the case that $P_0$ possesses a continuous
component,  the center measure $G=P_0^c$ is the only choice for which the posterior distribution is even consistent.
A data-dependent center measure might achieve this, but in the present context would come
down to the original problem of estimating $P_0$. Hierarchical choices (and hence random) of the center measure are 
considered in \cite{Camerlenghietal,Lijoietal2019}, but with the different aim of finding hierarchical structures in the data.

\begin{lemma}
If $P_0$ is discrete with infinitely many support points,
then $\tilde \PP_nf\ra 0$ in probability for any bounded function $f$ with finite support.
Furthermore,   $\tilde \PP_nf\ra f_\infty$ in probability, for any bounded function $f$ for which there exists a number
$f_\infty$ such that $\sup_{x: P_0\{x\}<\delta}|f(x)-f_\infty|\ra0$, as $\delta\downarrow 0$.
\end{lemma}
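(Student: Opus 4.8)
The plan is to reduce everything to the single fact that $K_n\ra\infty$ almost surely, which holds precisely because $P_0$ is discrete with infinitely many support points: each support point $x$ has $P_0\{x\}>0$, so it is missed by the first $n$ observations with probability $(1-P_0\{x\})^n\ra0$; consequently $\liminf_n K_n\ge m$ almost surely for every $m\in\NN$, and since $K_n$ is nondecreasing in $n$, $K_n\uparrow\infty$ almost surely.

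For the first statement I would simply note that, writing $S=\{x:f(x)\ne0\}$, one has $\tilde\PP_n f=K_n^{-1}\sum_{x\in\{X_1,\ldots,X_n\}}f(x)$, because the distinct observed values are exactly the $K_n$ points of the set $\{X_1,\ldots,X_n\}$. The sum is bounded in absolute value by the finite constant $\sum_{x\in S}|f(x)|$, uniformly in $n$. Dividing by $K_n\ra\infty$ gives $\tilde\PP_nf\ra0$ almost surely, a fortiori in probability.

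For the second statement, because $\tilde\PP_n$ is a probability measure, $\tilde\PP_nf=f_\infty+\tilde\PP_n(f-f_\infty)$, so it is enough to treat $g:=f-f_\infty$. I would fix $\eps>0$, use the hypothesis to pick $\delta>0$ with $|g(x)|<\eps$ whenever $P_0\{x\}<\delta$, and observe that $S_\delta:=\{x:P_0\{x\}\ge\delta\}$ is finite (at most $\lfloor1/\delta\rfloor$ points, since $\sum_xP_0\{x\}=1$). Splitting the average $\tilde\PP_n|g|$ over the distinct observed values into those in $S_\delta$ and those outside gives the bound
\[
|\tilde\PP_ng|\ \le\ \frac{1}{K_n}\sum_{x\in S_\delta}|g(x)|\ +\ \eps,
\]
since there are at most $K_n$ observed values outside $S_\delta$ and each contributes less than $\eps$ to the average. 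Letting $n\ra\infty$ and using $K_n\ra\infty$ yields $\limsup_n|\tilde\PP_ng|\le\eps$ almost surely, and since $\eps$ was arbitrary, $\tilde\PP_ng\ra0$ almost surely, hence in probability.

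There is no real obstacle here; the whole argument is a one-line application of $K_n\ra\infty$ once the target is split into the finitely many ``heavy'' atoms (washed out by the $1/K_n$ factor) and the ``light'' atoms (on which $f$ is uniformly within $\eps$ of $f_\infty$ by hypothesis). The only point worth care is that the first claim is not literally a special case of the second — a point $x_0$ with $f(x_0)\ne0$ but $P_0\{x_0\}=0$ violates the tail condition for every $f_\infty$ while contributing nothing to $\tilde\PP_nf$ almost surely — so the two parts are best handled separately as above.
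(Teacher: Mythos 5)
Your proof is correct, and it takes a genuinely different and more elementary route than the paper's. The paper argues via second moments: it computes $\EE (K_n\tilde\PP_nf)$ and $\var(K_n\tilde\PP_nf)$ explicitly, shows $\var K_n\le \EE K_n\ra\infty$, and runs a Chebyshev-type argument on the ratio $K_n\tilde\PP_nf/\EE(K_n\tilde\PP_nf)$, which delivers convergence in probability (and, as a by-product, $K_n/\EE K_n\ra1$ in probability and moment formulas that the paper reuses elsewhere, e.g.\ in the proof of Corollary~\ref{CorOne}). You instead use only the deterministic structure of $\tilde\PP_n$ as a uniform average over the $K_n$ distinct observed values, together with the single probabilistic fact that $K_n\ra\infty$ almost surely: the finitely many ``heavy'' atoms (those in $S_\delta$, or the finite support of $f$ in the first part) contribute at most a fixed constant divided by $K_n$, and the ``light'' atoms contribute at most $\eps$ each to the average. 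This buys you a shorter proof and a strictly stronger conclusion (almost sure convergence rather than in probability), at the cost of not producing the moment identities the paper wants for other purposes. Your closing observation that the first assertion is not a special case of the second — because $f$ may be nonzero at a non-atom, which breaks the tail condition for every $f_\infty$ while being almost surely invisible to $\tilde\PP_n$ — is accurate and worth keeping; the paper glosses over this by treating the finite-support case as the $f_\infty=0$ instance, which is only valid when the support of $f$ consists of atoms of $P_0$.
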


\begin{proof}
Let $x_j$ be the atoms of $P_0$, ordered by decreasing size $p_j:=P_0\{x_j\}$ and set $f_j=f(x_j)$.
Arguing as in the proof of Corollary~\ref{CorOne}, we can obtain (for the variance also see \cite{Karlin1967}, formulas (39)--(40))
\begin{align*}
\EE (K_n\tilde\PP_nf)& =\sum_{j=1}^\infty f_j\bigl(1-(1-p_j)^n\bigr),\\
\var (K_n\tilde\PP_nf)& =\sum_{j=1}^\infty f_j^2\bigl[(1-p_j)^n-(1-p_j)^{2n}\bigr]\\
&\qquad+\mathop{\sum\sum}_{i\not=j} f_if_j\bigl[(1-p_i-p_j)^n-(1-p_i)^n(1-p_j)^n\bigr].
\end{align*}
For $f=1$ these expressions reduce to $\EE K_n$ and $\var K_n$. As all terms of the series in $\EE K_n$ tend to 1,
it can be seen that $\EE K_n\ra\infty$. Furthermore, it can be seen that $\var K_n\le \EE K_n$, as the terms in the 
second series in $\var K_n$ are negative and the terms of the first series are bounded above by the terms in $\EE K_n$.
Because the terms of the series
tend to zero as $n\ra\infty$, for fixed $i,j$,  and $f_j\ra f_\infty$, as $j\ra\infty$, for general
$f$ as in the second assertion of the lemma, the expressions are asymptotically equivalent to
$f_\infty\EE K_n+o(1)$ and $f_\infty^2\var K_n+o(1)$, as $n\ra\infty$. It follows that
$$\var\Bigl(\frac{K_n\tilde\PP_nf}{\EE (K_n\tilde \PP_nf)}-1\Bigr)=\frac{\var (K_n\tilde\PP_nf)}{\bigl(\EE( K_n\tilde PP_nf)\bigr)^2}
=\frac{f_\infty^2\var K_n+o(1)}{\bigl(f_\infty\EE K_n+o(1)\bigr)^2}.$$
Since $\var K_n\le \EE K_n\ra\infty$, the right side tends to zero if $f_\infty\not=0$. 
Then $K_n\tilde \PP_nf/\EE (K_n\tilde\PP_nf)\ra 1$, in probability. Taking $f=1$, we see that 
$K_n/\EE K_n\ra 1$, in probability. Combining the preceding, we conclude that
$\tilde\PP_nf/f_\infty\ra 1$, in probability.

If $f_\infty=0$, then it follows that $\EE (K_n\tilde\PP_nf)\ra 0$. Combination with the fact that $K_n\ra\infty$,
almost surely, gives that $\tilde\PP_nf\ra0$, in probability.

If $f$ has finite support, then the condition on $f$ in the second part holds with $f_\infty=0$ and hence
$\tilde\PP_nf\ra0$, in probability.
\end{proof}

\section{Numerical illustration}
To illustrate that credible sets can be off, we carried out  three simulation experiments, involving
three discrete true probability distributions $P_1, P_2, P_3$ on $\NN$. We focused on a credible
interval for the probability of the set $[2,\infty)$. The measure $P_1$ is finitely discrete
and given in  Table~\ref{def_p1}, while $P_2$ and $P_3$ are given by the formulas
\[P_2\{k\} \propto \frac{1}{k^2}, \qquad\qquad P_3 \{k\}\propto \frac{1}{k^{1.5}}.\]
By the results of \cite{Karlin1967}, as $n\ra\infty$ the number $K_n$ of distinct observations in a sample of size $n$ from
these distributions are asymptotically equal to 6, and proportional to $\sqrt{n}$ and  to $n^{2/3}$, respectively,  for $P_1$, $P_2$ and $P_3$.
Thus $K_n/\sqrt n$ tends to 0, a positive constant and $\infty$, respectively, and a bias
is expected for $P_2$ and $P_3$, but not for $P_1$, where $P_2$ is a boundary case.

\begin{table}[ht]
\caption{Probability distribution $P_1$}
\centering
\begin{tabular}{l | c c c c c c}\label{def_p1}
k         & 1 & 2& 3 & 4 & 5 & 6\\
$P_1(X= k)$ & 0.1 & 0.1 & 0.2 & 0.2 & 0.3 & 0.1
\end{tabular}
\end{table}

As prior parameters we used $\sigma=1/2$ and $M=1$ and $G$ the normal distribution with mean and variance 1.
The choice $M=1$ means that the prior is not biased exceedingly against the true distribution.

The Pitman-Yor posterior distribution can be simulated using the explicit representation given by
\cite{Pitman(1996b)} (see Section~\ref{SectionProofs}). Following Algorithm~1 from~\cite{Arbel2018}, we
truncated the infinite series in the representation at a finite value, ensuring that the total weight of the
tail is smaller than $n^{-1/2}$ so that the approximation is accurate within our context.
We simulated 10000 samples from each of $P_1$, $P_2$ and $P_3$ and for five different sample sizes: $n=10, 10^2, 10^3, 10^4, 10^5$. 
For each sample we computed a $95\%$ credible interval for $P[2,\infty)$ from
its marginal posterior distribution, constructed using the 0.025 and the 0.975 posterior quantiles.
We next computed coverage as the proportion of the 10000 replications that the true value, $P_1[2,\infty)$, $P_2[2,\infty)$ or $P_3[2,\infty)$,
belonged to the interval. We did the same with the credible interval shifted by 
$(\sigma K_n/\sqrt n)  \bigl(G[2,\infty)    -      \tilde\PP_n[2,\infty)\bigr)$, derived from \eqref{EqBias}.

Tables~\ref{table_1} and~\ref{table_2} summarise the results. For $P_1$ both
the corrected uncorrected intervals perform satisfactorily, whereas for $P_2$ and $P_3$ the uncorrected
intervals undercover, severely so for $P_3$, while the corrected intervals perform reasonably well, although not perfectly. 
The simulation results thus confirm the theoretical findings.

\begin{table}[ht]
\caption{Coverage of uncorrected posterior $95\%$ credible intervals}
\centering
\begin{tabular}{l | c c c c c}\label{table_1}
n  & 10 & 100 & 1000 & 10000 & 100000 \\
$P_1$ & 0.660 & 0.940 & 0.957 & 0.940 & 0.947\\
$P_2$ & 0.707 & 0.772 & 0.790 & 0.845 & 0.838\\
$P_3$ & 0.559 & 0.231 & 0.035 & 0.0 & 0.0
\end{tabular}
\end{table}

\begin{table}[ht]
\caption{Coverage of corrected posterior $95\%$ credible intervals}
\centering
\begin{tabular}{l | c c c c c} \label{table_2}
n  & 10 & 100 & 1000 & 10000 & 100000 \\
$P_1$ & 0.990 & 0.967 & 0.958 & 0.942 & 0.945\\
$P_2$ & 0.814 & 0.941 & 0.958 & 0.971 & 0.971\\
$P_3$ & 0.884 & 0.956 & 0.959 & 0.985 & 0.949
\end{tabular}
\end{table}

To illustrate the asymptotic normality of the posterior distribution, Figure~\ref{fig:densities}
shows density plots of the marginal posterior distribution of $P[2,\infty)$, given samples of various sizes from $P_1$.
The plots were computed from the 100000 replicates, using the {\tt R} ``density'' function. 
The normal approximation is satisfactory for $n=1000$, but the posterior is visibly skewed for $n=100$.

\begin{figure}
\caption{Density of the marginal posterior distribution of $P[2,\infty)$ based on $n$ observations from $P_1$,
for $n=10, 100, 1000$ (top row) and $n=10^4, 10^5$. The true value of the parameter is $P_1[2,\infty)=0.9$.}
 \includegraphics[width =3.8cm]{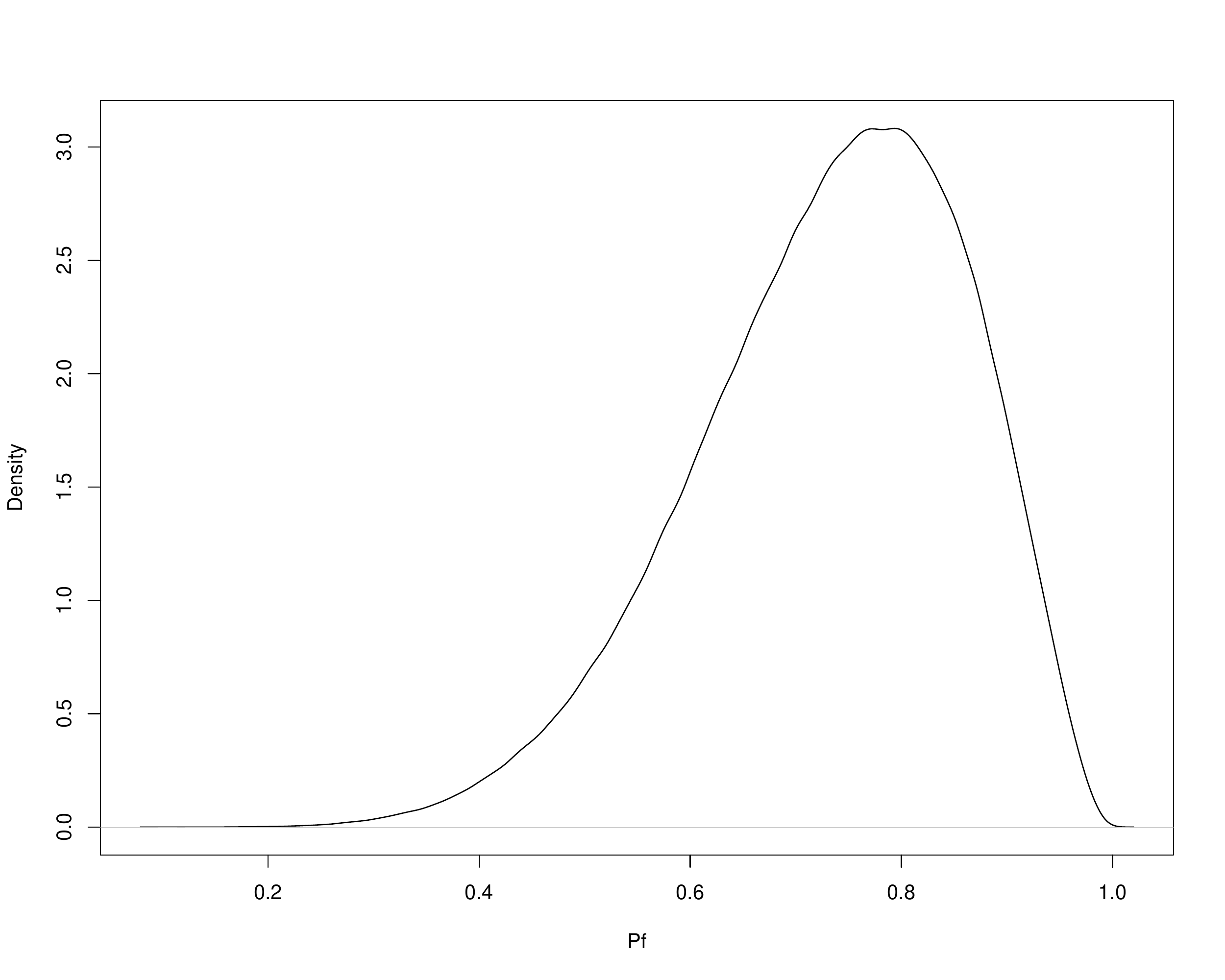}
\includegraphics[width = 3.8cm]{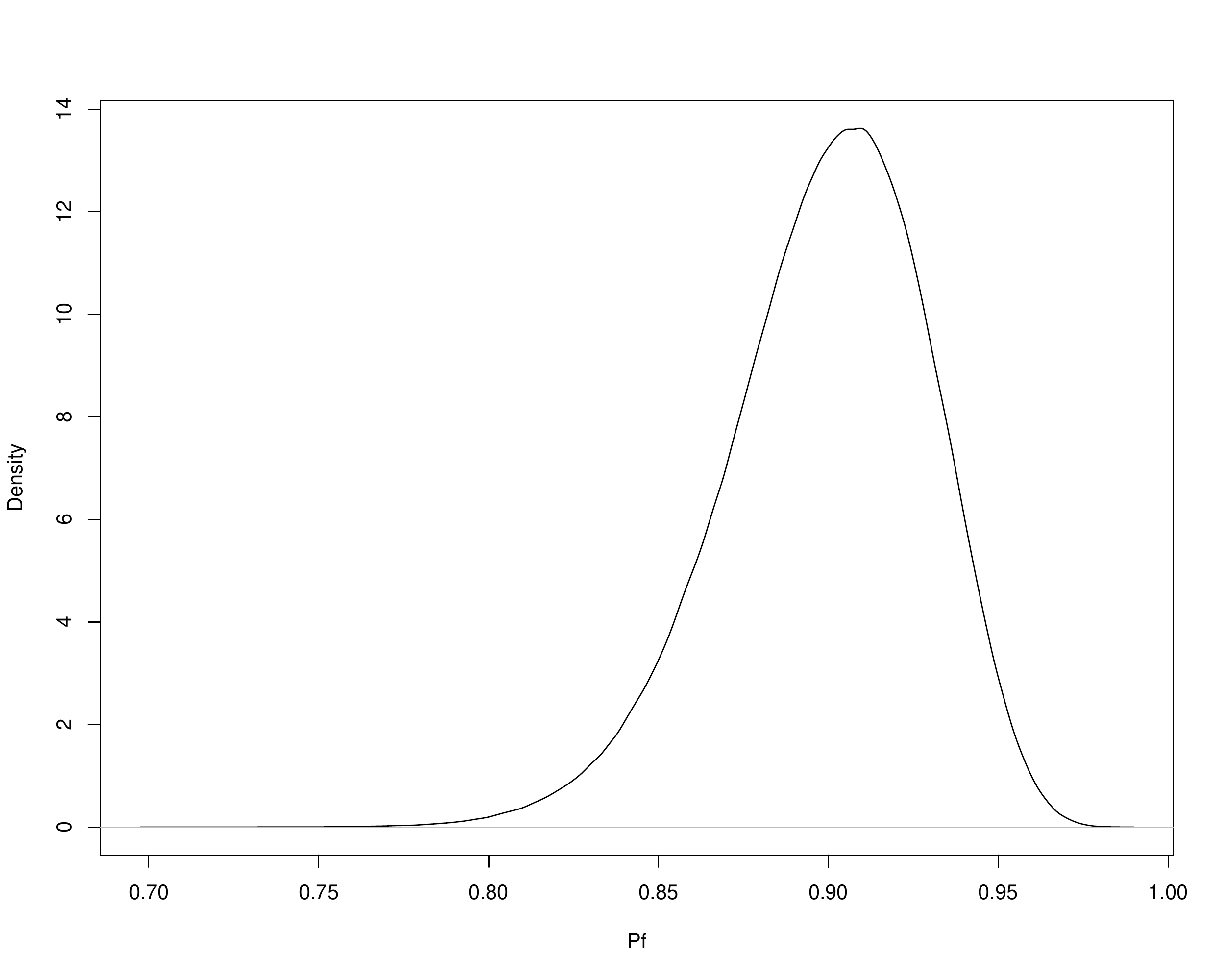}
 \includegraphics[width =3.8cm]{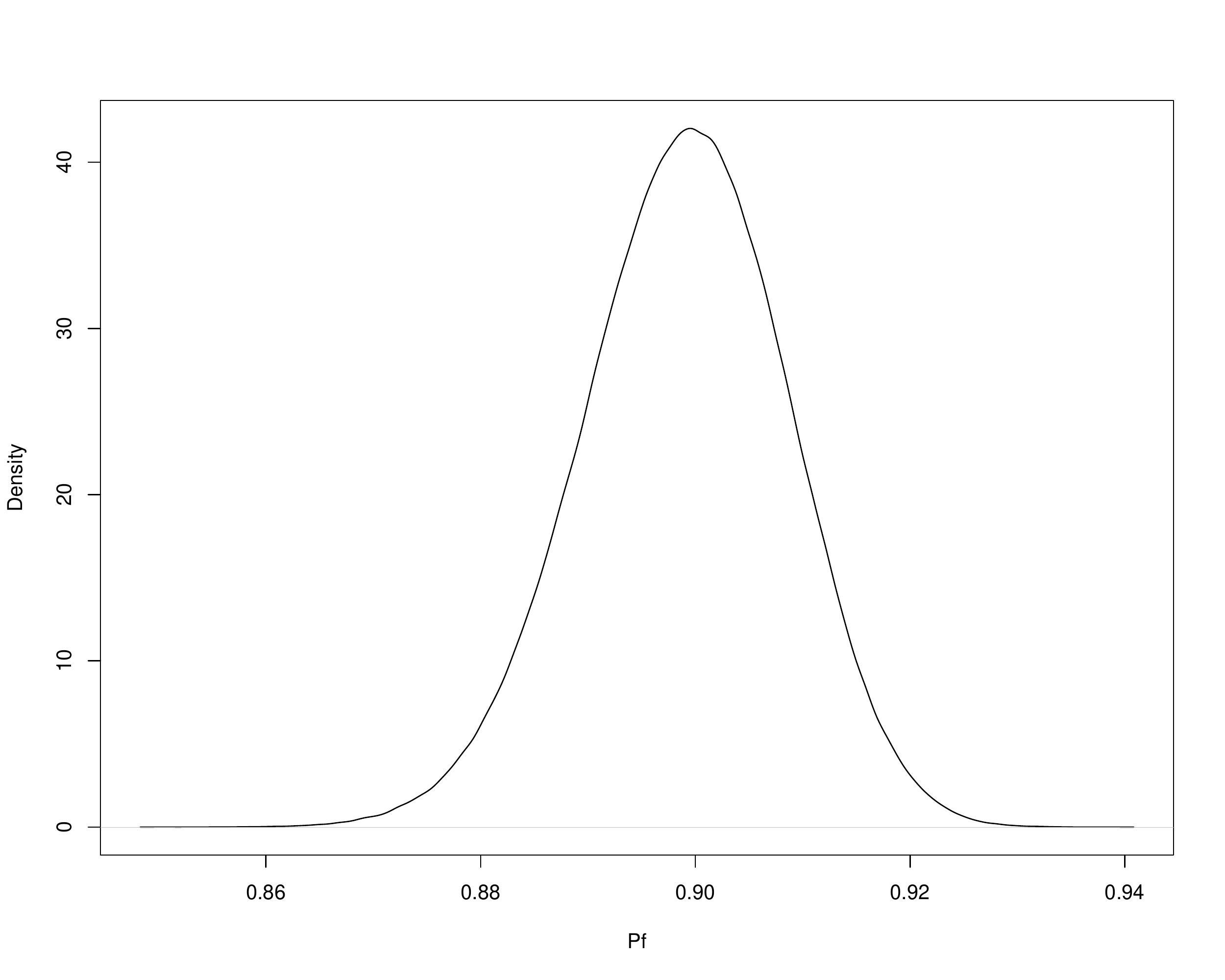}

\ \ \includegraphics[width =3.8cm]{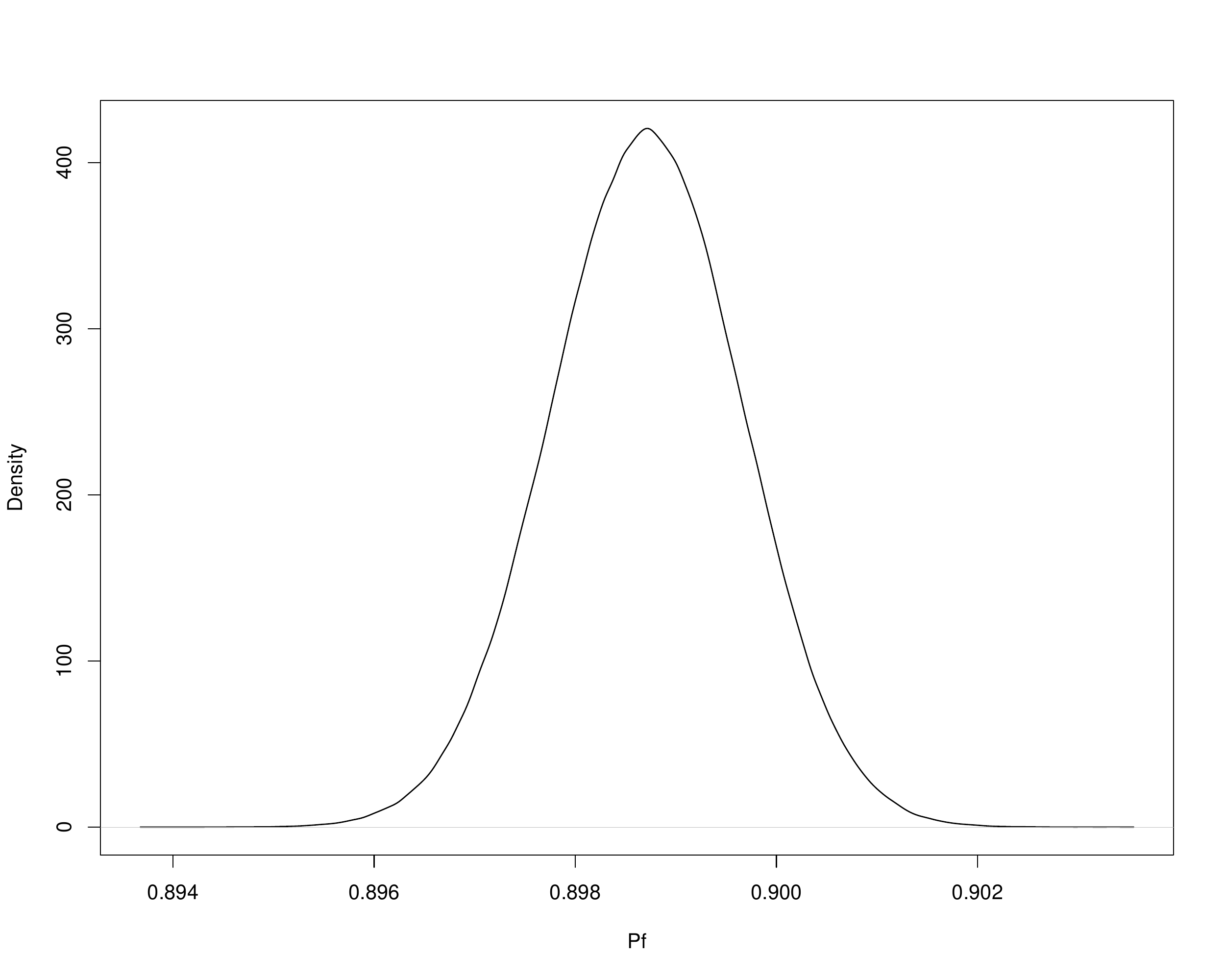}
\includegraphics[width = 3.8cm]{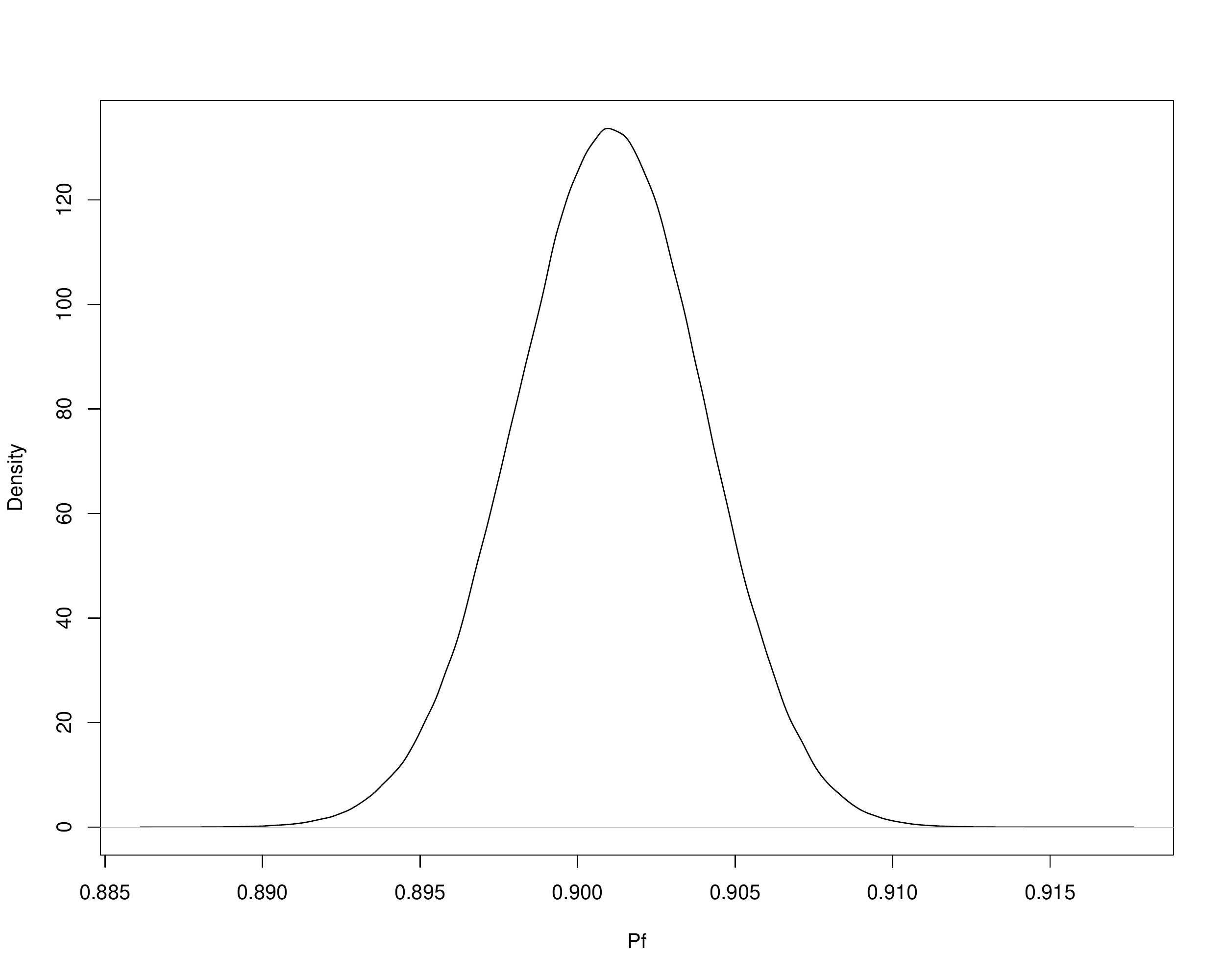}\hfill
\label{fig:densities}
\end{figure}

\section{Proofs}
\label{SectionProofs}
Let $\tilde{X}_1, \ldots, \tilde{X}_{K_n}$ be the distinct values in $X_1, \ldots, X_n$,
and let $N_{1,n}, \ldots, N_{K_n,n}$ be their multiplicities.
By Corollary~20 in \cite{Pitman1996b} (or see \cite[Theorem 14.37]{FNBI}), 
the posterior distribution of the Pitman-Yor process can be characterised as  the distribution of 
\begin{equation}
\label{EqPY}
\PY_n= R_nS_n+(1-R_n)Q_n, 
\end{equation}
for 
\begin{equation}
\label{EqDefSn}
S_n=\sum_{i=1}^{K_n} W_{n,i}\delta_{\tilde X_i},
\end{equation}
and independent variables $R_n,W_n,Q_n$ with, conditionally on $X_1,\ldots, X_n$, distributed
according to:
\begin{itemize}
\item $R_n\sim \Beta(n-\sigma K_n, M+\sigma K_n)$,
\item $Q_n\sim \PY(\sigma, M+ \sigma K_n ,G)$,
\item $W_n=(W_{n,1},\ldots, W_{n,K_n})\sim \Dir(K_n;N_{n,1}-\sigma,\ldots, N_{n,K_n}-\sigma)$.
\end{itemize}
Here $\Beta$ and $\Dir$ refer to the beta and Dirichlet distributions, respectively.
The number $K_n$ will tend almost surely to the total number of atoms of $P_0$ in the case
that $P_0$ is finitely discrete, and it will tend to infinity otherwise. In the latter case the 
rate of growth can have any order $n^\gamma$, for $0<\gamma\le 1$. (See
Theorem~8  of \cite{Karlin1967}, where it is shown that   $K_n/\EE K_n\rightarrow 1$, almost surely, where any rate can occur for $\EE K_n$.) 
The  proofs below use that $K_n/n$ tends to the mass $\lambda$ of the continuous part of $P_0$,
and the limit of the related sequence $n^{-1}K_n\tilde\PP_nf=n^{-1}\sum_{i=1}^{K_n}f(\tilde X_i)$.

\begin{lemma}
\label{LemmaConvergenceKn}
The number $K_n$  of distinct values among $X_1,\ldots,X_n\iid \lambda P_0^c+(1-\lambda)P_0^d$ satisfies $K_n/n\rightarrow \lambda$,
almost surely. The number $K_n^d$ of those values that belong to the set $S$ of atoms of $P_0^d$ satisfies $K_n^d/n\rightarrow 0$,
almost surely.
\end{lemma}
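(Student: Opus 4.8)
The plan is to reduce the statement to the classical fact that a sample of size $m$ from a discrete probability distribution has only $o(m)$ distinct values, almost surely (cf.\ \cite{Karlin1967} and the discussion following \eqref{EqBias}). First I would set up the usual mixture representation of the sample: let $Z_1,Z_2,\ldots$ be i.i.d.\ Bernoulli$(\lambda)$ variables and, given $(Z_i)_i$, let $X_i\sim P_0^c$ if $Z_i=1$ and $X_i\sim P_0^d$ if $Z_i=0$, independently over $i$; then $X_1,X_2,\ldots\iid P_0$. Writing $N_n^c=\sum_{i=1}^n Z_i$ for the number of ``continuous'' draws among the first $n$ and $N_n^d=n-N_n^c$ for the number of ``discrete'' draws, the strong law of large numbers gives $N_n^c/n\to\lambda$ and $N_n^d/n\to 1-\lambda$, almost surely.

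Next I would use the atomlessness of $P_0^c$ to split the set of distinct values into two disjoint groups. For $i\ne j$, conditioning on $Z_i,Z_j,X_j$ and on the event $\{Z_i=1\}$ the variable $X_i$ has the atomless law $P_0^c$, so $\Pr(X_i=X_j,\,Z_i=1)=0$; likewise $\Pr(X_i\in S,\,Z_i=1)=0$, since $S$ is countable and $P_0^c(S)=0$. Taking the union over the countably many pairs $(i,j)$ and indices $i$ shows that, almost surely, the continuous draws are pairwise distinct and none of them falls in $S$. Hence, almost surely, the distinct values among $X_1,\ldots,X_n$ consist of the $N_n^c$ (pairwise distinct) continuous draws together with the distinct atoms of $S$ hit by the discrete draws, and these two collections are disjoint; that is, $K_n=N_n^c+K_n^d$, with $K_n^d$ exactly the count appearing in the statement. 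Combined with $N_n^c/n\to\lambda$, the lemma reduces to showing $K_n^d/n\to 0$ almost surely.

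Finally I would establish $K_n^d/n\to 0$. If $\lambda=1$ then $N_n^d=0$ almost surely, $K_n^d=0$, and there is nothing to prove, so assume $\lambda<1$; then $N_n^d\to\infty$ almost surely. Let $Y_1,Y_2,\ldots$ be the subsequence of those $X_i$ with $Z_i=0$; these are i.i.d.\ $P_0^d$, and $K_n^d$ equals the number of distinct values among $Y_1,\ldots,Y_{N_n^d}$. The expected number of distinct values among $Y_1,\ldots,Y_m$ equals $\sum_j\bigl(1-(1-P_0^d\{y_j\})^m\bigr)$, which is $o(m)$ by dominated convergence (each summand divided by $m$ is at most $P_0^d\{y_j\}$ and tends to $0$ as $m\to\infty$); by Theorem~8 of \cite{Karlin1967} this number is therefore also $o(m)$ almost surely, the case of finitely many atoms being trivial. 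Evaluating at the random index $m=N_n^d\le n$ and using $N_n^d\to\infty$ gives $K_n^d/N_n^d\to 0$, hence $K_n^d/n\to 0$, almost surely; combined with the second paragraph this yields $K_n/n=N_n^c/n+K_n^d/n\to\lambda$. The only mildly delicate point is the middle step, where one must discard a countable union of null events to separate the continuous draws from each other and from $S$; beyond that, all the probabilistic content is carried by the classical $o(m)$ bound for the number of distinct values in a discrete sample, which is already used elsewhere in the paper.
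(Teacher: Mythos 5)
Your proof is correct, and the overall decomposition is the same as the paper's: split the distinct values into those outside the atom set $S$ (a.s.\ pairwise distinct and in bijection with the ``continuous'' draws, so their count over $n$ tends to $\lambda$ by the strong law) and those in $S$, reducing everything to $K_n^d/n\to0$. Where you diverge is in how you prove that last reduction. The paper does it in two lines with a truncation: $K_n^d\le m+n\PP_n\{x_{m+1},x_{m+2},\ldots\}$ for every $m$, so $\limsup K_n^d/n\le P_0\{x_{m+1},x_{m+2},\ldots\}$ a.s.\ by the strong law, and this is arbitrarily small. You instead compute $\EE K_m^d=\sum_j\bigl(1-(1-P_0^d\{y_j\})^m\bigr)=o(m)$ by dominated convergence, upgrade to an almost sure statement via Karlin's strong law $K_m/\EE K_m\to1$, and then evaluate along the random index $N_n^d$. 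This works (Karlin's Theorem~8 does hold for a general discrete distribution with $\EE K_m\to\infty$, as the paper itself notes in Section~\ref{SectionProofs}, and the random-index and finitely-many-atoms cases are handled correctly), but it imports a nontrivial external strong law and a mixture/Bernoulli construction where an elementary $m$-truncation plus the SLLN suffices; the paper's version is shorter, self-contained, and avoids any question about the exact hypotheses of Karlin's theorem. Your version does have the minor virtue of making the identity $K_n=N_n^c+K_n^d$ and the null-set bookkeeping fully explicit, which the paper leaves implicit in the claim $K_n^c=n\PP_n(S^c)$.
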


\begin{proof}
The number of distinct values not in $S$ is $K_n^c:=n\PP_n(S^c)$ and hence $K_n^c/n\rightarrow P_0(S^c)=\lambda$, 
almost surely. If $S=\{x_1,x_2,\ldots\}$,
then the number of distinct values in $S$ is bounded above by $m+n\PP_n\{x_{m+1},x_{m+2},\ldots\}$,  for any $m$,
and hence $K_n^d/n\le m/n+\PP_n\{x_{m+1},x_{m+2},\ldots\}\rightarrow P_0\{x_{m+1},x_{m+2},\ldots\}$,
almost surely, for every $m$.
\end{proof}

A class $\F$ of measurable functions $f: \X\to\RR$ is \emph{$P_0$-Glivenko-Cantelli} if the uniform law of large numbers
holds: $\sup_{f\in\F}|\PP_nf-P_0f|\ra0$, outer almost surely (see e.g.\ \cite{WCEP}, Chapter~2.4; we write ``outer'' because the supremum
may not be measurable; for standard examples this is superfluous).
An \emph{envelope function} of $\F$ is  a measurable function $F:\X\to\RR$ such that $|f|\le F$, for every $f\in\F$.

\begin{lemma}
\label{LemmaLLNDistinctSc}
Suppose $\F$ has an envelope function with $P_0F<\infty$.
If $S$ is the set of atoms of $P_0^d$, then $\sup_{f\in\F}  \bigl|  n^{-1}  \sum_{i=1}^{K_n}  f(\tilde X_i)1_{\tilde X_i\in S}  \bigr|  \rightarrow 0$, outer almost surely.
Furthermore, if $\s_n\ra \s\in [0,1]$, and $\F$ is a $P_0$-Glivenko-Cantelli class, then uniformly in $f\in\F$, outer almost surely,
$$\PP_nf+\frac{\sigma_n K_n}n\tilde\PP_nf\ra Tf:=(1-\lambda)P_0^d f+(1-\s)\lambda P_0^cf.$$
\end{lemma}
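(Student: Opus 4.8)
The plan is to prove the two assertions of Lemma~\ref{LemmaLLNDistinctSc} separately, since the second relies on the first together with Lemma~\ref{LemmaConvergenceKn}. For the first assertion, note that
\[
\Bigl|\frac1n\sum_{i=1}^{K_n}f(\tilde X_i)1_{\tilde X_i\in S}\Bigr|\le \frac1n\sum_{i=1}^{K_n}F(\tilde X_i)1_{\tilde X_i\in S},
\]
and the right-hand side no longer depends on $f$, so it suffices to bound a single (nonrandom) quantity. Writing $S=\{x_1,x_2,\ldots\}$ ordered arbitrarily, I would split at a level $m$: the contribution of the atoms $x_1,\ldots,x_m$ is at most $(1/n)\sum_{j=1}^m F(x_j)$, which tends to $0$ as $n\to\infty$ for each fixed $m$; the contribution of the atoms beyond $m$ is at most $(1/n)\sum_{i:\,\tilde X_i\in\{x_{m+1},\ldots\}}F(\tilde X_i)$, which is bounded by $\PP_nF1_{S_{>m}}$ where $S_{>m}=\{x_{m+1},x_{m+2},\ldots\}$ (each distinct value occurring at least once in the sample). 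By the strong law of large numbers applied to $F1_{S_{>m}}$ (which has finite mean since $P_0F<\infty$), $\PP_n(F1_{S_{>m}})\to P_0(F1_{S_{>m}})$ almost surely; and the latter tends to $0$ as $m\to\infty$ by dominated convergence. Choosing $m$ large and then letting $n\to\infty$ gives the claim. The only subtlety is the outer-measure wording, but $\PP_nF1_{S_{>m}}$ and $(1/n)\sum_{j\le m}F(x_j)$ are genuine measurable functions, so no outer-measure gymnastics are needed here.

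For the second assertion, decompose $\tilde\PP_n f$ according to whether the distinct value lies in $S$ or not:
\[
\frac{\sigma_n K_n}{n}\tilde\PP_n f=\frac{\sigma_n}{n}\sum_{i=1}^{K_n}f(\tilde X_i)1_{\tilde X_i\in S}+\frac{\sigma_n}{n}\sum_{i=1}^{K_n}f(\tilde X_i)1_{\tilde X_i\notin S}.
\]
The first term is $O(1)$ times the quantity controlled by the first assertion (uniformly in $f$, using $|\sigma_n|\le 2$ eventually), hence tends to $0$ uniformly in $f$, outer almost surely. For the second term, the distinct values outside $S$ are, almost surely, exactly the observations falling in $S^c$, each occurring exactly once in $X_1,\ldots,X_n$ with probability one (since $P_0^c$ is atomless, ties among $P_0^c$-draws occur with probability $0$). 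Hence $n^{-1}\sum_{i=1}^{K_n}f(\tilde X_i)1_{\tilde X_i\notin S}=\PP_n(f1_{S^c})$, and $\sigma_n\to\sigma$ gives $(\sigma_n/n)\sum\cdots\to\sigma\lambda P_0^c f$ uniformly in $f$ by the Glivenko--Cantelli property of $\F$ (the class $\{f1_{S^c}:f\in\F\}$ is again $P_0$-Glivenko--Cantelli, being a ``restriction'' of $\F$, or one simply notes $\PP_n(f1_{S^c})=\PP_n f-\PP_n(f1_S)$ and both terms converge uniformly). Finally $\PP_n f\to P_0 f=(1-\lambda)P_0^d f+\lambda P_0^c f$ uniformly in $f\in\F$, again by the Glivenko--Cantelli hypothesis, so adding up,
\[
\PP_n f+\frac{\sigma_n K_n}{n}\tilde\PP_n f\to (1-\lambda)P_0^d f+\lambda P_0^c f+\sigma\lambda P_0^c f-\sigma\lambda P_0^c f=(1-\lambda)P_0^d f+(1-\sigma)\lambda P_0^c f,
\]
uniformly in $f$, outer almost surely, which is $Tf$ as claimed.

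The main obstacle is a bookkeeping point rather than a deep one: making precise the identity $n^{-1}\sum_{i=1}^{K_n}f(\tilde X_i)1_{\tilde X_i\notin S}=\PP_n(f1_{S^c})$, i.e.\ that with $P_0^\infty$-probability one no two of the ``continuous'' draws coincide, and that consequently each distinct value outside $S$ has multiplicity one in the sample. This is where the atomless assumption on $P_0^c$ enters essentially, and one should spell it out (e.g.\ the event that $X_i=X_j$ for some $i\ne j$ with $X_i\notin S$ has probability $0$). Once this is granted, everything else reduces to the strong law / Glivenko--Cantelli applied to the explicit decompositions above, together with the uniform boundedness of the sequence $\sigma_n$. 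The uniformity in $f$ throughout is inherited directly from the Glivenko--Cantelli hypothesis and from the envelope bound in the first assertion, so no new uniformity argument is required.
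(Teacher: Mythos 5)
Your treatment of the first assertion is correct and only superficially different from the paper's: after bounding by the envelope, you truncate the sum over the atoms of $S$ at the $m$-th atom and use $\PP_n(F1_{S_{>m}})\ra P_0(F1_{S_{>m}})$ plus dominated convergence in $m$, whereas the paper truncates at the level $\{F>M\}$ and combines $n^{-1}K_n^d M\ra 0$ (Lemma~\ref{LemmaConvergenceKn}) with $\PP_n F1_{F>M}\ra P_0F1_{F>M}$. Both are valid; yours does not even need Lemma~\ref{LemmaConvergenceKn}. For the second assertion, your decomposition of $K_n\tilde\PP_n f$ into the contributions of $S$ and $S^c$, the almost sure identity $\sum_{i\le K_n}f(\tilde X_i)1_{\tilde X_i\notin S}=\sum_{i\le n}f(X_i)1_{X_i\notin S}$ (which you rightly justify via the atomlessness of $P_0^c$; the paper uses it silently), and the appeal to Glivenko--Cantelli preservation for $\{f1_{S^c}: f\in\F\}$ are exactly the paper's steps.

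However, your final display is arithmetically false, and it conceals a real issue. Your own (correct) computations give $\PP_nf\ra(1-\l)P_0^df+\l P_0^cf$ and $(\s_nK_n/n)\tilde\PP_nf\ra\s\l P_0^cf$ uniformly, so the sum as written converges to $(1-\l)P_0^df+(1+\s)\l P_0^cf$, which is not $Tf$. In the display you insert a spurious $-\s\l P_0^cf$ and then assert $\l P_0^cf+\s\l P_0^cf-\s\l P_0^cf=(1-\s)\l P_0^cf$, which is wrong unless $\s=0$. The resolution is that the lemma as stated carries a sign typo: the quantity that converges to $Tf=P_0f-\s\l P_0^cf$ is $\PP_nf-(\s_nK_n/n)\tilde\PP_nf$, and this minus-sign version is precisely what is defined as $T_nf$ and used in the proof of Lemma~\ref{LemThree} (the paper's own proof of the present lemma ends with the same slip, asserting that $P_0f+\s\l P_0^cf$ equals $Tf$). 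Your analysis of the individual pieces is exactly what is needed; the correct move is to flag the sign discrepancy and prove the minus-sign statement, rather than force the algebra to match the stated limit.
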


\begin{proof}
For any $M$, the supremum is bounded above by $n^{-1}K_n^d M+\PP_n F1_{F>M}\rightarrow
P_0F1_{F>M}$, almost surely. The first term tends to zero by Lemma~\ref{LemmaConvergenceKn}, for any $M$.
The second term can be made arbitrarily small by choosing $M$ large.

For the convergence in the display we write $K_n\tilde\PP_nf=\sum_{i=1}f(X_i)1_{X_i\notin S}+\sum_{i=1}^{K_n}f(\tilde X_i)1_{\tilde X_i\in S}$.
By the first assertion, the second sum divided by $n$ tends to zero, uniformly in $f$. The first sum divided by
$n$ tends to $\lambda P_0^cf$, where the convergence is uniform in $f\in\F$ if $\F$ is a Glivenko-Cantelli class (which implies
that the set of functions $x\mapsto f(x)1_{S^c}(x)$ is a Glivenko-Cantelli class, in view of \cite{vdVWPreservation}).
Thus the left side of the display tends to $P_0f+ \s\l P_0^cf$, which is equal to $Tf$.
\end{proof}


\subsection{Proof of  Theorem~\ref{thm:PYBVM}}
\label{SectionProofMain}
The left side  $\sqrt n\bigl(\PY_n-\PP_n+(\sigma K_n/n)(\tilde\PP_n-G)\bigr)$ of the theorem can be decomposed as
\begin{align}
\sqrt n\Bigl( R_n-1+ \frac{\sigma K_n}n\Bigr)(S_n-Q_n)
&+\sqrt n\biggl(S_n\Bigl(1-\frac{\sigma K_n}n\Bigr)-\PP_n+\frac{\sigma K_n}n\tilde\PP_n\biggr)\nonumber\\
&\qquad+\sigma\sqrt{K_n}(Q_n-G)\sqrt{\frac{K_n}n}.\label{EqDecomp}
\end{align}
We derive the limit distributions of these three terms in Lemmas~\ref{LemOne}--\ref{LemThree} below. For later use it will
be helpful to allow $\s\in(0,1)$ to depend on $n$.
For this reason we give precise proofs of the first two lemmas,
although they are very similar to results obtained in \cite{James2008, FNBI}. The main novelty is in
the third lemma. For simplicity we assume that $\s_n\in(0,1)$ converges to a limit, which we allow to be 0 or 1. 

\begin{lemma}
\label{LemOne}
If $\s_n\ra \s\in [0,1]$, then
\begin{align}\label{EqConvergenceR}
\sqrt n\left(  R_n-1+ \frac{\sigma_n K_n}n\right)\given X_1,\ldots, X_n
&\weak N\bigl(  0, (1-\sigma \lambda)\sigma\lambda\bigr),\qquad\text{a.s.}
\end{align}
\end{lemma}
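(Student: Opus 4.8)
The plan is to recognize that $R_n\sim\Beta(n-\sigma_n K_n, M+\sigma_n K_n)$, so that its mean and variance are explicit rational functions of $n$, $K_n$, $M$, and $\sigma_n$. First I would compute $\EE(R_n\given X_1,\ldots,X_n)=(n-\sigma_n K_n)/(n+M)$, hence
\[
\sqrt n\Bigl(\EE(R_n\given X_1,\ldots,X_n)-1+\frac{\sigma_n K_n}{n}\Bigr)
=\sqrt n\Bigl(\frac{n-\sigma_n K_n}{n+M}-\frac{n-\sigma_n K_n}{n}\Bigr)
=-\frac{M\sqrt n\,(n-\sigma_n K_n)}{n(n+M)},
\]
which is $O(M/\sqrt n)$ and therefore tends to $0$ (using $M\ll\sqrt n$, in fact $M$ fixed suffices), almost surely. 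So the recentering by $1-\sigma_n K_n/n$ is asymptotically equivalent to recentering by the conditional mean, and the bias contribution is negligible.

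Next I would compute the conditional variance of $\sqrt n\,R_n$, namely $n\var(R_n\given X_1,\ldots,X_n)=n\,\frac{(n-\sigma_n K_n)(M+\sigma_n K_n)}{(n+M)^2(n+M+1)}$. Writing $K_n/n\to\lambda$ almost surely by Lemma~\ref{LemmaConvergenceKn} and $\sigma_n\to\sigma$, the numerator behaves like $n^2(1-\sigma\lambda)(\sigma\lambda)+O(nM)$ and the denominator like $n^3$, so this variance converges almost surely to $(1-\sigma\lambda)\sigma\lambda$. (When $\sigma\lambda\in\{0,1\}$ the limiting variance is $0$ and the statement becomes convergence to a point mass, which is covered by the same computation.)

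Finally, for the limiting distribution itself I would invoke a standard fact: a $\Beta(a,b)$ random variable, suitably centered and scaled, is asymptotically normal as $a\wedge b\to\infty$. Concretely, with $a=n-\sigma_n K_n\to\infty$ and $b=M+\sigma_n K_n$; if $\lambda>0$ then also $b\to\infty$ and one can write $R_n=A_n/(A_n+B_n)$ with $A_n\sim\Gamma(a,1)$, $B_n\sim\Gamma(b,1)$ independent, apply the CLT to $A_n$ and $B_n$ and the delta method to the map $(x,y)\mapsto x/(x+y)$. If $\lambda=0$ then $b$ stays bounded and one argues directly that $\sqrt n(1-R_n)$, equivalently $n(1-R_n)$ rescaled, is asymptotically normal via the same Gamma representation (here $1-R_n=B_n/(A_n+B_n)$ with $B_n$ a sum of a bounded-in-expectation but growing number, $M+\sigma_n K_n$, of exponentials — note $K_n\to\infty$ still when $P_0$ is discrete with infinitely many atoms, while if $P_0$ is finitely discrete $K_n$ is eventually constant and the limiting variance $\sigma\lambda(1-\sigma\lambda)=0$, consistent with a degenerate limit). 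In all cases the limit is $N(0,(1-\sigma\lambda)\sigma\lambda)$ by matching the variance computed above. The mild obstacle is bookkeeping the boundary regimes $\lambda\in\{0,1\}$ and $\sigma\in\{0,1\}$ uniformly; the cleanest route is the Gamma-ratio representation together with the Lindeberg CLT for the (possibly slowly growing) sums, since it handles degenerate and nondegenerate limits in one stroke, and the almost sure convergence $K_n/n\to\lambda$ transfers the randomness of the conditioning into deterministic limits for the parameters.
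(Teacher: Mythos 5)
Your proposal is correct and follows essentially the same route as the paper: both use the representation $R_n=A_n/(A_n+B_n)$ with independent Gamma variables of shapes $n-\sigma_nK_n$ and $M+\sigma_nK_n$, apply the central limit theorem to each Gamma sum (your delta-method step is just the paper's explicit algebraic decomposition of $(A_n+B_n)(R_n-a/(a+b))$), and dispose of the boundary cases $\sigma\lambda\in\{0,1\}$ by noting that the relevant rescaled variance vanishes, so the limit degenerates consistently with $N(0,\sigma\lambda(1-\sigma\lambda))$. The mean and variance bookkeeping that identifies the centering $1-\sigma_nK_n/n$ up to an $O(M/\sqrt n)$ error matches the paper's observation that $u_n/(u_n+v_n)=(1-\sigma_nK_n/n)(1+O(1/n))$.
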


\begin{proof}
We can represent the beta variable $R_n$ as the quotient $R_n=U_n/(U_n+V_n)$, for independent gamma variables 
$U_n\sim \Gamma(u_n,1)$ and $V_n\sim \Gamma(v_n,1)$, for $u_n=n-\s_n K_n$ and $v_n=M+\s_nK_n$
the means, and also variances, of the latter variables. We can decompose
$$(U_n+V_n)\Bigl(R_n-\frac{u_n}{u_n+v_n}\Bigr)=\frac{v_n}{u_n+v_n}(U_n-u_n)-\frac{u_n}{u_n+v_n}(V_n-v_n).$$
Since $\s_nK_n/n\ra \s\lambda\in[0,1]$, we have $v_n/(u_n+v_n)\ra\s\lambda$ and $u_n/(u_n+v_n)\ra 1-\s\l$.
Furthermore, $(U_n+V_n)/n\ra 1$, almost surely, by the law of large numbers.

If $\s\l<1$, then $n-\s_nK_n\ra\infty$ and hence $(U_n-u_n)/\sqrt {u_n}\weak Z_1\sim N(0,1)$, by the central limit theorem.
It follows that $(U_n-u_n)/\sqrt n\weak Z_1\sqrt {1-\s\l}$. If $\s\l=1$, then $\var (U_n/\sqrt n)=u_n/n\ra0$ and hence
$(U_n-u_n)/\sqrt n\weak 0$, where the limit 0 is identical to $Z_1\sqrt {1-\s\l}$ in this case. Thus
in all cases $(U_n-u_n)/\sqrt n\weak Z_1\sqrt {1-\s\l}$.

If $\s\l>0$, then $\s_nK_n\ra\infty$ and hence $(V_n-v_n)/\sqrt {v_n}\weak Z_2\sim N(0,1)$, by the central limit theorem.
It follows that $(V_n-vn)/\sqrt n\weak Z_2\sqrt {\s\l}$. If $\s\l=1$, then $\var (V_n/\sqrt n)=v_n/n\ra0$ and hence
$(V_n-v_n)/\sqrt n\weak 0$, where the limit 0 is identical to $Z_2\sqrt {\s\l}$ in this case. Thus
in all cases $(V_n-v_n)/\sqrt n\weak Z_2\sqrt {\s\l}$.

Combining the preceding, we see that the sequence $\sqrt n\bigl(R_n-u_n/(u_n+v_n)\bigr)$ converges weakly to
$\s\l Z_1\sqrt{1-\s\l}+(1-\s\l)Z_2\sqrt{\s\l}$.
As the limit variable has variance $(1-\sigma \lambda)\sigma\lambda$ and $u_n/(u_n+v_n)=(1-\s_nK_n/n)(1+O(1/n))$, 
this concludes the proof.
\end{proof}


\begin{lemma}
\label{LemTwo}
If $\s_n\ra \s\in[0,1]$ and $K_n\ra\infty$ and $\F$ is a class of finitely many $G$-square-integrable functions, then 
in $\RR^\F$,
\begin{align}
\label{EqConvergenceQ}
\sigma _n\sqrt{K_n}(Q_n-G)\given X_1,\ldots, X_n&\weak \sqrt{\s(1-\sigma)}\,\GG_G.\qquad \hbox{a.s.}
\end{align}
The convergence is also true in $\linfty(\F)$ if $\F$ possesses a $G$-square integrable envelope function
and the Pitman-Yor process $\PY(\s,\s,G)$ satisfies the central limit theorem in this space.
\end{lemma}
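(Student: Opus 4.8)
The plan is to reduce both assertions to the case $Gf=0$ for every $f\in\F$, by replacing $f$ with $f-Gf$; this turns $Q_nf$ into $Q_n(f-Gf)=Q_nf-Gf$ and leaves $\GG_G$ unchanged. One may also allow $\s\in\{0,1\}$: then the target variance $\s(1-\s)Gf^2$ vanishes and the conclusion follows from the second-moment computation below; so assume $\s\in(0,1)$.

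For the finite-dimensional statement I would use the stick-breaking representation $Q_n=\sum_{i\ge1}W_{n,i}\delta_{\theta_i}$, with weights $W_n=(W_{n,i})$ built from independent $\Beta(1-\s_n,M+\s_nK_n+i\s_n)$ variables and $\theta_1,\theta_2,\ldots\iid G$ independent of $W_n$ (and, given $K_n$, of the data), and argue conditionally on $W_n$. Given $W_n$, the vector $\s_n\sqrt{K_n}(Q_nf)_{f\in\F}=\s_n\sqrt{K_n}\sum_iW_{n,i}(f(\theta_i))_{f\in\F}$ is a sum of independent mean-zero terms with conditional covariance $\s_n^2K_n\bigl(\sum_iW_{n,i}^2\bigr)\Sigma$, $\Sigma_{f,g}=Gfg$. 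I need: (a) $K_n\sum_iW_{n,i}^2\to(1-\s)/\s$ in probability, which follows from $\EE\sum_iW_{n,i}^2=(1-\s_n)/(M+\s_nK_n+1)$ together with the second-moment bound $\var\sum_iW_{n,i}^2\asymp K_n^{-3}$; and (b) $\sqrt{K_n}\max_iW_{n,i}\to0$ in probability, which follows from $\EE\max_iW_{n,i}^3\le\EE\sum_iW_{n,i}^3\asymp K_n^{-2}$, giving $\max_iW_{n,i}=O_P(K_n^{-2/3})$. With (a)--(b) I would verify the Lindeberg condition for the multivariate Lindeberg--Feller theorem conditionally on $W_n$: the Lindeberg sum is bounded by $\s_n^2K_n\bigl(\sum_iW_{n,i}^2\bigr)\max_{f\in\F}\EE\bigl[f(\theta_1)^2\,\II_{|f(\theta_1)|>\eps/(\s_n\sqrt{K_n}\max_iW_{n,i})}\bigr]$, which tends to zero because the prefactor tends to $\s(1-\s)$ while $\sqrt{K_n}\max_iW_{n,i}\to0$ drives the truncation level to infinity (here $Gf^2<\infty$ enters). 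Hence the conditional law of $\s_n\sqrt{K_n}(Q_nf)_{f\in\F}$ converges in probability to $N(0,\s(1-\s)\Sigma)$, which is the law of $(\sqrt{\s(1-\s)}\,\GG_Gf)_{f\in\F}$; the limit being deterministic, the convergence also holds unconditionally, conditionally on $X_1,\ldots,X_n$, and along every sample path with $K_n\to\infty$, which gives the ``a.s.'' form.

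For the statement in $\linfty(\F)$ I would pass through an aggregation property of Pitman--Yor processes: writing $M+\s K_n=\theta_0+\s k$ with $\theta_0>0$ and $k\in\{K_n-1,K_n\}$,
\begin{equation*}
\PY(\s,M+\s K_n,G)\ \overset{d}{=}\ D_0P^{(0)}+\sum_{j=1}^{k}D_jP^{(j)},
\end{equation*}
where $(D_0,D_1,\ldots,D_k)\sim\Dir(\theta_0,\s,\ldots,\s)$ is independent of $P^{(0)}\sim\PY(\s,\theta_0,G)$ and of $P^{(1)},P^{(2)},\ldots\iid\PY(\s,\s,G)$; this identity is checked by comparing exchangeable partition probability functions (the case of a converging sequence $\s_n\to\s$ is handled with the obvious triangular-array modifications). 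Writing the Dirichlet weights as normalised independent gammas, $D_j=\gamma_j/T$ with $\gamma_j$ i.i.d.\ $\mathrm{Gamma}(\s,1)$ for $j\ge1$, $\gamma_0\sim\mathrm{Gamma}(\theta_0,1)$ and $T=\sum_{j\ge0}\gamma_j$, yields
\begin{equation*}
\s\sqrt{K_n}(Q_n-G)\ \overset{d}{=}\ \s\sqrt{K_n}\,D_0(P^{(0)}-G)\ +\ \frac{\s K_n}{T}\cdot\frac1{\sqrt{K_n}}\sum_{j=1}^{k}\gamma_j(P^{(j)}-G).
\end{equation*}
The first term vanishes in $\linfty(\F)$ since $\sqrt{K_n}D_0\to0$ in probability and $\|P^{(0)}-G\|_\F\le P^{(0)}F+GF<\infty$ a.s.\ (using $GF<\infty$), and $\s K_n/T\to1$ a.s.\ by the law of large numbers; so by Slutsky it remains to show $K_n^{-1/2}\sum_{j=1}^{k}\gamma_j(P^{(j)}-G)$ converges in $\linfty(\F)$ to a tight Gaussian process with covariance $\EE[\gamma_1^2]\,\cov(P^{(1)}f,P^{(1)}g)=\s(\s+1)\cdot\frac{1-\s}{\s+1}\cov_G(f,g)=\s(1-\s)\cov_G(f,g)$, i.e.\ to $\sqrt{\s(1-\s)}\,\GG_G$. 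Decomposing $\gamma_j(P^{(j)}-G)=\s(P^{(j)}-G)+(\gamma_j-\s)(P^{(j)}-G)$, the first sum converges by the hypothesised central limit theorem for $\PY(\s,\s,G)$ in $\linfty(\F)$, and the second by the multiplier central limit theorem --- the multipliers $\gamma_j-\s$ being i.i.d.\ mean zero with all moments finite and independent of $P^{(j)}-G$, with $\F$ Donsker for $\PY(\s,\s,G)$; the two contributions are independent and their sum has the stated covariance.

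The main obstacle is the $\linfty(\F)$ statement: one must recognise the single Pitman--Yor process $Q_n\sim\PY(\s,M+\s K_n,G)$, whose stick-breaking weights are not of empirical/bootstrap form, as --- up to an asymptotically negligible remainder --- a normalised sum of $\sim K_n$ independent $\PY(\s,\s,G)$-blocks, so that the hypothesised functional central limit theorem applies. The aggregation identity together with its gamma representation delivers this reduction; establishing that identity and then handling the (asymptotically non-degenerate, gamma-distributed) weights $K_nD_j$ in the multiplier step is the delicate part. The finite-dimensional case, by contrast, is a routine conditional Lindeberg argument once the two elementary facts (a)--(b) about the stick-breaking weights are in hand.
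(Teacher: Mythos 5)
Your proposal is correct, and the two halves sit differently relative to the paper. For the finite-dimensional statement you take a genuinely different route: you condition on the stick-breaking weights of $Q_n$ and run a Lindeberg argument over the random locations $\theta_i\iid G$, reducing everything to the two weight facts $K_n\sum_iW_{n,i}^2\ra(1-\s)/\s$ and $\sqrt{K_n}\max_iW_{n,i}\ra0$. The paper instead invokes the aggregation identity $\PY(\s_n,M+\s_nK_n,G)\overset{d}{=}\sum_{i=0}^{K_n}W_{n,i}P_i$ with $(W_{n,i})\sim\Dir(K_n+1;M,\s_n,\dots,\s_n)$ and $P_i\iid\PY(\s_n,\s_n,G)$ already at this stage, and applies Lindeberg to the triangular array $V_{n,i}(P_i-G)f$ with $V_{n,i}\iid\Gamma(\s_n,1)$, handling the $\s_n$-dependence via a uniform-integrability/Dunford--Pettis argument. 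Your route is more elementary (no aggregation needed for the marginals) but requires the third- and fourth-moment bounds on the stick weights, which you assert rather than derive; they do hold (e.g.\ via $\EE\sum_iW_i^p=(1-\s)^{[p-1]}/(\theta+1)^{[p-1]}$ for $\PY(\s,\theta,G)$), and your dispatch of the boundary cases $\s\in\{0,1\}$ by a second-moment bound matches the paper's treatment of the case $K_n\s_n=O(1)$. For the $\linfty(\F)$ statement your argument is essentially the paper's: the same aggregation identity, the same gamma representation of the Dirichlet weights, negligibility of the $D_0P^{(0)}$ term via the square-integrable envelope, and the multiplier CLT for $K_n^{-1/2}\sum_j\gamma_j(P^{(j)}-G)$ given the hypothesised CLT for $\PY(\s,\s,G)$. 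Your explicit centering $\gamma_j=\s+(\gamma_j-\s)$ before invoking the multiplier CLT is a small clarification over the paper, which applies it with the non-centered multipliers.

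Two minor points. First, the two pieces $\s\sum_j(P^{(j)}-G)$ and $\sum_j(\gamma_j-\s)(P^{(j)}-G)$ are not independent (they share the $P^{(j)}$), only uncorrelated; this does not hurt you because the limiting covariance is read off directly from the i.i.d.\ representation $\EE\gamma_1^2\cdot\cov\bigl((P^{(1)}-G)f,(P^{(1)}-G)g\bigr)=\s(1-\s)\cov_G(f,g)$ and asymptotic tightness of a sum follows from tightness of each summand, but the word ``independent'' should be replaced. Second, the ``obvious triangular-array modifications'' for $\s_n\ra\s$ in the $\linfty(\F)$ step deserve a sentence, since the hypothesis is a CLT for $\PY(\s,\s,G)$ at the limiting $\s$ only; the paper faces the same issue and resolves the finite-dimensional analogue via uniform integrability in $\s$.
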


\begin{proof}
The  process  $Q_n\sim \PY(\s_n, M+\s_nK_n, G)$ centered at mean zero can be represented as 
$$Q_n-G\sim\sum_{i=0}^{K_n}W_{n,i}(P_i-G),$$ where $(W_{n,0},\ldots, W_{n,K_n})\sim \Dir(K_n+1;M,\s_n,\ldots,\s_n)$ is independent
of the independent processes $P_0\sim \PY(\s_n, M,G)$ and $P_i\iid\PY(\s_n,\s_n,G)$, for $i=1,\ldots K_n$ (see e.g.\
Proposition~14.35 in \cite{FNBI}).
The variable $W_{n,0}$ is $B(M,K_n\s_n)$-distributed, whence
$$\s_n\sqrt{K_n}\EE\bigl|W_{n,0}(P_0-G)f\bigr|= \frac{\s_n\sqrt{K_n}M}{M+K_n\s_n}\EE|(P_0-G)f|\le \frac{M}{\sqrt{K_n}} \sqrt{Gf^2},$$
where the moment of $(P_0-G)f$ can be obtained from Proposition~14.34 in \cite{FNBI}.
Next by the gamma representation of the Dirichlet distribution (e.g.\ Propositions~G.2 and G.3 in \cite{FNBI}), we can represent 
$$\s_n\sqrt{K_n}\sum_{i=1}^{K_n}W_{n,i}(P_i-G)\sim (1-W_{n,0})\frac{K_n^{-1/2}\sum_{i=1}^{K_n}V_{n,i}(P_i-G)}{K_n^{-1}\sum_{i=1}^{K_n}V_{n,i}/\s_n},$$
where the variables $V_{i,n}\iid \Gamma(\s_n,1)$ are independent of $W_{n,0}$ and the $P_i$.
The triangular array of variables $V_{n,i}(P_i-G)$ are i.i.d.\ for every $n$ with
\begin{align*}
&\EE V_{n,1}^2\bigl((P_1-G)f\bigr)^2=\s_n(1+\s_n)G(f-Gf)^2\frac{1-\s_n}{1+\s_n},\\
&\EE V_{n,1}^2\bigl((P_1-G)f\bigr)^21_{|V_{n,1}(P_1-G)f|\ge M_n}\ra0,
\end{align*}
for any $M_n\ra\infty$. The second claim is implied by the uniform integrability of the set of variables 
$W_\sigma := V_\sigma^2 \left( (P_\sigma - G) f \right)^2$,
for $\s\in [0,1]$, where $V_\sigma \sim \Gamma(\sigma, 1)$ is independent of $P_\sigma \sim \textrm{PY}(\sigma, \sigma, G)$,
and $W_0$ and $W_1$ are defined to be degenerate at 0, in agreement with the first line of the
preceding display. This
 itself is a consequence of the continuity of the map $\s \mapsto W_\s$ from $[0,1]$ to $L_2(\Omega)$ and the
Dunford-Pettis theorem. The continuity follows from the norm continuity, $\E W_{\s_n}^2\ra \E W_\s^2$, if $\s_n\ra \s$, 
by the first assertion
in the display, combined with the continuity in distribution of $\s\mapsto W_\s$.
Therefore, the sequence $K_n^{-1/2}\sum_{i=1}^{K_n}V_{n,i}(P_i-G)$ tends to a normal distribution
with mean zero and variance $\s (1-\s) G(f-Gf)^2$, by the Lindeberg central limit theorem. 
The linearity of the process in $f$ shows that as a process it tends marginally in distribution to
the process $\sqrt{\s(1-\sigma)}\,\GG_G$. 
Because $\var \bigl(K_n^{-1}\sum_{i=1}^{K_n}V_{n,i}/\s_n\bigr)=1/(K_n\s_n)$, we have 
$K_n^{-1}\sum_{i=1}^{K_n}V_{n,i}/\s_n\ra 1$, in probability, if $K_n\s_n\ra\infty$. Since also $1-W_{n,0}\ra1$,
the proof is complete in the case that $K_n\s_n\ra\infty$.

If $K_n\s_n$ remains bounded, then necessarily $\s_n\ra0$, as $K_n\ra\infty$, by assumption. Then
\begin{align*}
\s_n^2K_n \EE \Bigl(\sum_{i=0}^{K_n}W_{n,i}(P_i-G)f\Bigr)^2
&=\s_n^2K_n \sum_{i=0}^{K_n}\sum_{j=0}^{K_n}\EE W_{n,i}W_{n,j}(P_i-G)f(P_j-G)f\cr
&\le \s_n^2K_n  \EE \Bigl(\sum_{i=0}^{K_n}W_{n,i}\Bigr)^2Gf^2\le \s_n^2K_nGf^2.
\end{align*}
Since this tends to zero, the lemma holds also in this case, with a limit process equal to 0, which is 
equal to $\sqrt{\s(1-\s)}\GG_G$.

For the final assertion we note that the preceding argument gives
the convergence of $\sup_{f\in\F}\s_n\sqrt{K_n}W_{n,0}(P_0-G)f$ to zero
 for any class $\F$ with square-integrable envelope function. The convergence of
$K_n^{-1/2}\sum_{i=1}^{K_n}V_{n,i}(P_i-G)$ in $\infty(\F)$ follows from the convergence of $K_n^{-1/2}\sum_{i=1}^{K_n}(P_i-G)$
by the multiplier central limit theorem (e.g.\ Lemma~2.9.1 and Theorem~2.9.2 in \cite{WCEP}).
\end{proof}

\begin{lemma}
\label{LemThree}
If $\s_n\ra\s\in [0,1]$, where $\s\l<1$, then for any $P_0$-Donsker class with square-integrable envelope function
\begin{align}
\sqrt n\biggl(S_n\Bigl(1-\frac{\sigma_n K_n}n\Bigr)-\PP_n+\frac{\sigma K_n}n\tilde\PP_n\biggr)\weak 
\WW -\frac1{1-\s\l}\WW1\,T,\qquad\text{a.s.},
\label{EqConvergenceWprocess}
\end{align}
in $\linfty(\F)$, where $\WW=\sqrt{\l(1-\s)}\GG_{P_0^c}+\sqrt{1-\l}\GG_{P_0^d}$, for independent Brownian bridge processes
$\GG_{P_0^c}$ and $\GG_{P_0^d}$, and $T$ is the (deterministic) process defined in Lemma~\ref{LemmaLLNDistinctSc}.
The convergence is true in probability for any $P_0$-Donsker class.
If $\s_n\ra\s\in [0,1]$, where $\s\l=1$, then the sequence of processes tends to the zero proces.
\end{lemma}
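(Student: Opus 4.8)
The plan is to rewrite the process inside the square bracket as a genuine empirical process that can be controlled by a Donsker/multiplier argument. First I would recall from \eqref{EqDefSn} and the conditional law of $W_n$ that $S_n=\sum_{i=1}^{K_n}W_{n,i}\delta_{\tilde X_i}$ with $(W_{n,1},\dots,W_{n,K_n})\sim\Dir(K_n;N_{n,1}-\s_n,\dots,N_{n,K_n}-\s_n)$, so that, using the gamma representation of the Dirichlet distribution, $W_{n,i}=G_{n,i}/\sum_{j}G_{n,j}$ with independent $G_{n,i}\sim\Gamma(N_{n,i}-\s_n,1)$. The key algebraic observation is that $\EE(W_{n,i}\mid X_1,\dots,X_n)=(N_{n,i}-\s_n)/(n-\s_n K_n)$, so the ``mean'' version of $S_n(1-\s_nK_n/n)$ is exactly $\sum_i(N_{n,i}-\s_n)/n\,\delta_{\tilde X_i}=\PP_n-(\s_n K_n/n)\tilde\PP_n$. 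Hence the centered process is
\[
\sqrt n\Bigl(S_n\bigl(1-\tfrac{\s_nK_n}{n}\bigr)-\PP_n+\tfrac{\s_nK_n}{n}\tilde\PP_n\Bigr)
=\sqrt n\sum_{i=1}^{K_n}\Bigl(W_{n,i}\bigl(1-\tfrac{\s_nK_n}{n}\bigr)-\tfrac{N_{n,i}-\s_n}{n}\Bigr)\delta_{\tilde X_i},
\]
up to the harmless replacement of $\s K_n/n$ by $\s_n K_n/n$, which I would absorb at the end since $(\s-\s_n)K_n/n\to 0$ faster than $1/\sqrt n$ is not needed — only $\sqrt n(\s-\s_n)(K_n/n)\tilde\PP_nf$ appearing, and in the statement $\s$ (not $\s_n$) is used, so this is really an identity once one reads $\s$ there as the fixed limit and notes the discrepancy is $o_P(1/\sqrt n)$ under the standing assumption that $\s$ is fixed in the first two lemmas; for the $n$-dependent case this term must be tracked, but that is used only later with $\hat\s_n\to\s_0$ at a rate.

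Next I would pass to the gamma representation globally: write $S_n(1-\s_nK_n/n)-\PP_n+(\s_nK_n/n)\tilde\PP_n$ in terms of $\sum_i G_{n,i}\delta_{\tilde X_i}$ normalized by $\Sigma_n:=\sum_i G_{n,i}$, and Taylor-expand the normalization $1/\Sigma_n$ around its mean $n-\s_nK_n$. This produces a leading linear term plus a correction proportional to $(\Sigma_n-\EE\Sigma_n)$ times the mean measure, which is precisely where the $-\tfrac{1}{1-\s\l}\WW1\,T$ piece comes from: $\Sigma_n-(n-\s_nK_n)$ contributes a centered sum that converges to $\WW1$ after scaling by $\sqrt n$ (this is a one-dimensional CLT for a triangular array of independent centered gammas with the variance computation giving the $\l(1-\s)+(1-\l)$ split), and the mean measure normalized converges to $T$ by Lemma~\ref{LemmaLLNDistinctSc}, while the leading factor $1/(n-\s_nK_n)$ times $n$ gives $1/(1-\s\l)$. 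The main linear term is $\sqrt n\,n^{-1}\sum_i(G_{n,i}-(N_{n,i}-\s_n))\delta_{\tilde X_i}$, a sum of independent centered contributions which I would handle by the multiplier CLT (Lemma~2.9.1 / Theorem~2.9.2 in \cite{WCEP}): conditionally the ``multipliers'' $G_{n,i}-(N_{n,i}-\s_n)$ are independent mean-zero with variance $N_{n,i}-\s_n$, and attaching these to the atoms $\delta_{\tilde X_i}$ with multiplicities $N_{n,i}$ reconstructs, after summing, a process whose covariance is the $P_0$-Brownian-bridge covariance weighted by the discrete/continuous split — this is exactly the $\WW=\sqrt{\l(1-\s)}\GG_{P_0^c}+\sqrt{1-\l}\GG_{P_0^d}$ term. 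The uniform-in-$f$ (Donsker) upgrade follows because $\{f:f\in\F\}$ being $P_0$-Donsker makes the relevant partial-sum processes asymptotically equicontinuous, and the multiplier CLT preserves this; the almost-sure version under a square-integrable envelope comes from the strong multiplier CLT / reverse submartingale argument as in \cite{WCEP}, Chapter~2.9.

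The main obstacle is bookkeeping the two sources of randomness — the Dirichlet weights $W_{n,i}$ and the locations/multiplicities $(\tilde X_i,N_{n,i})$ determined by the data — simultaneously and uniformly over $\F$. Concretely, the delicate point is that the atoms $\tilde X_i$ are themselves random and correlated with the $N_{n,i}$, so the ``multiplier'' array is not independent of the ``index'' part in the naive sense; the resolution is to condition on $X_1,\dots,X_n$ throughout (which freezes $K_n$, the $\tilde X_i$ and the $N_{n,i}$), verify the multiplier-CLT hypotheses conditionally (Lindeberg-type bound on $\sum_i(N_{n,i}-\s_n)/n\cdot\mathbf 1\{\cdot\}$ using that $\max_i N_{n,i}/n\to 0$ a.s., which in turn follows from $\PP_n\{x\}\to P_0\{x\}\le 1$ and the atomless part contributing singletons), and only then take the outer expectation. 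Checking the Lindeberg/uniform-integrability condition for the gamma multipliers (whose shape parameter $N_{n,i}-\s_n$ varies with $i$ and can be as small as $1-\s_n$) is the spot requiring the most care — one uses that $\Gamma(a,1)$ has a uniformly integrable square over $a$ in a bounded set once centered and that $\sum_i(N_{n,i}-\s_n)=n-\s_nK_n\sim n(1-\s\l)$, so no single term dominates. Finally, the degenerate case $\s\l=1$ (i.e.\ $\s=\l=1$) is handled separately and easily: then $n-\s_nK_n=o(n)$ and a direct second-moment bound, exactly as in the last paragraph of the proof of Lemma~\ref{LemTwo}, shows the whole process is $o_P(1)$.
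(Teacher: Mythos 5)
Your overall architecture matches the paper's: a gamma representation of the Dirichlet weights, a quotient decomposition $S_nf=\overline S_nf/\overline S_n1$ whose denominator produces the $-\frac1{1-\s\l}\WW1\,T$ correction, a conditional CLT for the numerator, and a separate second-moment argument for $\s\l=1$. But there is a genuine gap at the central step. You keep one gamma variable $G_{n,i}\sim\Gamma(N_{n,i}-\s_n,1)$ per \emph{distinct} value and propose to verify Lindeberg for the array $(G_{n,i}-(N_{n,i}-\s_n))f(\tilde X_i)/\sqrt n$, justifying this with the claim that $\max_i N_{n,i}/n\to0$ a.s. That claim is false precisely in the case the lemma is about: if $P_0\{x\}=p>0$ then $N_{n,i}/n\to p$ for the corresponding atom, so that term has conditional variance $(N_{n,i}-\s_n)f^2(\tilde X_i)/n\to p\,f^2(x)$, which does not vanish, and the truncated second moment $\EE[\eta_{n,i}^2 1_{|\eta_{n,i}|>\eps}]$ does not tend to zero. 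The Lindeberg--Feller theorem therefore does not apply to your array as stated (the sum is still asymptotically Gaussian, but only because each large-atom term is itself a normalized gamma sum; you would have to split off the finitely many large atoms and treat them separately). The paper avoids this entirely by decomposing each $\Gamma(N_{n,i}-\s,1)$ into $N_{n,i}$ independent pieces ($U_{i,0}\sim\Gamma(1-\s,1)$ and $N_{n,i}-1$ standard exponentials) and reattaching them to the $n$ original observations $X_1,\ldots,X_n$, so that every summand $\xi_{n,i}f(X_i)/\sqrt n$ has variance $O(1/n)$ uniformly and Lindeberg is immediate.

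The same coarse indexing also undermines your appeal to the multiplier CLT for the uniform-in-$\F$ statement: Theorem~2.9.2 / 2.9.7 of \cite{WCEP} requires i.i.d.\ multipliers attached to an i.i.d.\ sample, whereas your multipliers $G_{n,i}-(N_{n,i}-\s_n)$ have $i$-dependent distributions and are attached to the distinct values rather than to the sample. After the paper's reindexing the multipliers take only two forms ($\Gamma(1-\s,1)$ and $\Gamma(1,1)$), and a Jensen/symmetrization step (adding independent $\xi'_{n,i}$ so the sums become i.i.d.\ $\Gamma(1,1)$) reduces asymptotic tightness to the genuine i.i.d.-multiplier case. Without that finer decomposition, your equicontinuity argument does not go through as written. (Your handling of the $\s_n$ versus $\s$ discrepancy in the statement and of the degenerate case $\s\l=1$ is fine.)
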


\begin{proof}
A gamma representation for the multinomial vector $W_n$ in the definition of $S_n$ is
\[W_{n,i}=\frac{  U_{i,0}  +  \sum_{j=1}^{N_{n,i} - 1} U_{i,j}}{  \sum_{i=1}^{K_n}  \left(    U_{i,0}    +    \sum_{j=1}^{N_{n,i}-1} U_{i,j}  \right)},\]
for all $U_{i,j}$ independent, $U_{i,0}\sim \Gamma(1-\sigma,1)$ and $U_{i,j}\sim \Gamma(1,1)$, for $j\ge 1$.
Relabel the $n$ variables $U_{i,j}$ as $\xi_{n,1},\ldots,\xi_{n,n}$, as follows. Let $S$ be the set of all atoms
of $P_0$. An observation $X_i$  that is not contained in $S$ appears exactly once
in the set $\{X_1,\ldots, X_n\}$ of observations; set the variable $\xi_{n,i}$ with the corresponding $i$ equal to $U_{i,0}$.
Every $X_i$ that is contained in $S$ appears $N_{n,i}\ge 1$ times among $X_1,\ldots, X_n$; set the $\xi_{n,j}$ with indices
corresponding to these appearances equal to $U_{i,0},U_{i,1},\ldots, U_{i,N_{n,i}-1}$. Then
\begin{align}
\label{EqQuotient}
S_n=\sum_{i=1}^{K_n} W_{n,i} f(\tilde X_i)=
\frac{  n^{-1}\sum_{i=1}^n \xi_{n,i}f(X_i)}{  n^{-1}\sum_{i = 1}^n \xi_{n,i}}=:\frac{\overline S_nf}{\overline S_n1},
\end{align}
and the left side of the lemma can be decomposed as 
\begin{align*}
&S_nf\,\sqrt n\Bigl(1-\frac{\s_nK_n}n-\overline S_n1\Bigr)+\sqrt n\Bigl(\overline S_nf-\PP_nf+\frac{\s_n K_n}n\tilde\PP_n f\Bigr)\\
&\qquad=-S_nf\sqrt n (\overline S_n1-T_n1)+\sqrt n(\overline S_nf-T_nf),
\end{align*}
where $T_nf=\PP_nf-(\s_nK_n/n)\tilde\PP_nf$ tends to $Tf$, by Lemma~\ref{LemmaLLNDistinctSc}.
We shall show that $\sqrt n (\overline S_n-T_n)\given X_1,\ldots, X_n\weak \WW$. Then $S_nf\ra Tf/T1=Tf/(1-\s\l)$, and the
result follows in the case that $\s\l<1$.

The variables $\xi_{n,1},\ldots,\xi_{n,n}$ are independent. The $K_n$ variables corresponding to 
the distinct values are $\Gamma(1-\sigma,1)$-distributed;
the others are $\Gamma(1,1)$-distributed. 
Thus the conditional mean and variance of $\overline S_nf$ are given by 
\begin{align*}
\sum_{i = 1}^n (\EE \xi_{n,i}) f(X_i)&=\sum_{i = 1}^n f(X_i)-\sigma\sum_{i=1}^{K_n}f(\tilde X_i)=T_nf,\\
\frac{1}{n}\sum_{i = 1}^n (\var\xi_{n,i})f^2(X_i)&=\frac{1}{n}\sum_{i = 1}^n f^2(X_i)-\frac{\sigma}{n}\sum_{i=1}^{K_n} f^2(\tilde X_i)
\ra Tf^2,\qquad \text{a.s.},
\end{align*}
by Lemma~\ref{LemmaLLNDistinctSc}. The limit variance is equal to $\var \WW f$.
To complete the proof of the convergence $\sqrt n (\overline S_n-T_n)f\given X_1,\ldots, X_n\weak \WW$,
it suffices to verify the Lindeberg-Feller condition. We have, for $\xi_n\sim\Gamma(1-\s_n,1)$ and $\bar \xi_n\sim \Gamma(1,1)$,
\begin{align*}
& \frac{1}{n}  \sum_{i = 1}^n   \EE  \left(    \xi_{n,i}^2 f^2(X_i)1_{|\xi_{n,i} f(X_i)|>\eps\sqrt n}\given  X_1,\ldots, X_n \right)\\
&\qquad\le \max\Bigl(\EE \xi_{n}^21_{|\xi_{n}|\,\max_{1\le i\le n}|f(X_i)|>\e\sqrt n},\EE \bar\xi_{n}^21_{|\bar \xi_{n}|\,\max_{1\le i\le n}|f(X_i)|>\e\sqrt n}\Bigr)\, \PP_nf^2.
\end{align*}
This tends to zero for every sequence $X_1,X_2,\ldots$ such that both $\PP_nf^2=O(1)$ and $\max_{1\le i\le n}|f(X_i)|/\sqrt n\rightarrow0$, which is
almost every sequence if $P_0f^2<\infty$.

By the Cram\'er-Wold device and linearity in $f$, the convergence is then implied 
for finite sets of $f$. 

For convergence as processes in $\linfty(\F)$ for a general Donsker class, it suffices to prove asymptotic tightness
(see e.g.\ Theorem~1.5.4 in \cite{WCEP}).
The processes $n^{-1/2}\sum_{i = 1}^n (\xi_{n,i}-\EE \xi_{n,i})f(X_i)$ are multiplier processes with
mean zero, independent multipliers. Because the multipliers are not i.i.d., a direct 
application of the conditional multiplier central limit theorem (see Theorem~2.9.7 in \cite{WCEP}) is not possible.
However, the multipliers have two forms $\Gamma(1-\sigma,1)$ and $\Gamma(1,1)$. By
Jensen's inequality, for any collection $\G$ of functions,
\[
\EE_\xi
  \left\|
    \sum_{i = 1}^n 
      (\xi_{n,i}
      -
      \EE \xi_{n,i})f(X_i)
  \right\|_{\G}^*
  \le 
\EE_{\xi,\xi'}
  \left\|
    \sum_{i = 1}^n 
    \left(
      \xi_{n,i}
      -
      \EE\xi_{n,i}
      +
      \xi_{n,i}'
      -
      \EE\xi_{n,i}'
    \right)
    f(X_i)
  \right\|_{\G}^*,
\]
for any random variables $\xi_{n,i}'$ independent of the $\xi_{n,i}$. We can choose these variables so that
all $\xi_{n,i}+\xi_{n,i}\iid\Gamma(1,1)$. The process in the right side then does have i.i.d.\ multipliers,
and the asymptotic tightness follows from the i.i.d.\ case (as in \cite{WCEP}), Theorems~3.6.13, 2.9.6 and~2.9.7; also see 
Corollary~2.9.9; we apply the preceding inequality with $\G$ equal to the set of differences
$f-g$ of functions $f,g\in\F$ with $L_2(P_0)$-norm of $f-P_0f-g+P_0g$ smaller than $\delta$).

Finally if $\s\l=1$, then both $\s=1$ and $\l=1$. The second implies that $P_0=P_0^c$, $K_n=n$ and $\tilde\PP_n=\PP_n$.
Thus in this case $S_n(1-\s_nK_n/n)=\sumin W_{n,i}f(X_i)(1-\s_n)$, for 
$(W_{n,1},\ldots,W_{n,n})\sim \Dir(n,1-\s_n,\ldots, 1-\s_n)$, and $T_nf=\PP_nf(1-\s_n)$.. We can now compute
\begin{align*}
\EE\Bigl(\sumin W_{n,i}f(X_i)\given X_1,\ldots, X_n\Bigr)&=\PP_nf,\\
\var\Bigl(\sumin W_{n,i}f(X_i)\given X_1,\ldots, X_n\Bigr)&=\sum_{i=1}^n\sum_{j=1}^n \cov(W_{n,i},W_{n,j})f(X_i)f(X_j)\\
&\le \sum_{i=1}^n\frac{(n-1)f^2(X_i)}{n^2(n(1-\s_n)+1)}\le \frac{\PP_nf^2}{n(1-\s_n)},
\end{align*}
as the covariances between the $W_{n,i}$ are negative. This implies that
the conditional mean and variance of $\sqrt n (S_n(1-\s_n)-T_nf)$ tend to zero, as $\s_n\ra 1$.
\end{proof}

We are ready to complete the proof of Theorem~\ref{thm:PYBVM}.  If $K_n\ra\infty$,
then Lemmas~\ref{LemThree}--\ref{LemTwo} together with the convergence $K_n/n\ra\l$
immediately give the convergence of
the second and third terms in the decomposition \eqref{EqDecomp}. Furthermore, these
lemmas give that $S_n-Q_n\ra Tf/(1-\s\l)$, which combined with Lemma~\ref{LemOne}
gives the convergence of the first term in \eqref{EqDecomp}.

If $K_n$ remains bounded, then Lemma~\ref{LemTwo} does not apply. However,
since the process $Q_n$ will run through finitely many different Pitman-Yor processes,
we have $Q_n-G=O_P(1)$ and hence the third term in \eqref{EqDecomp} is $O_P(1/\sqrt n)$, still under
the assumption that $K_n$ is bounded. Lemma~\ref{LemThree}  is still valid,
and hence the second term  in \eqref{EqDecomp} converges to a Gaussian process as before.
We can divide this term by $1-\s K_n/n\ra 1$, to see that $S_n\ra T$, in view of Lemma~\ref{LemmaLLNDistinctSc}.
The sequence $K_n$ can remain bounded only if  
$\lambda=0$ and then the normal limit in 
Lemma~\ref{LemOne} is degenerate, whence $\sqrt n (R_n-1)=-\sigma K_n/\sqrt{n}+o_P(1)=o_P(1)$, almost surely,
again under the assumption that $K_n$ is bounded. Combined this shows that the first
term in  \eqref{EqDecomp}  tends to zero.

\subsection{Proof of  Theorem~\ref{thm:PYBVMEstimatedSigma}}
\label{Sectionthm:PYBVMEstimatedSigma}
Make the dependence  on $\s$ of the Pitman-Yor posterior process and its limit   explicit 
by writing $\PY_n(\s)$ and $\GG(\s)$ for the process $\PY_n$ in \eqref{EqPY} and the right side in 
Theorem~\ref{thm:PYBVM}, and set
$$\CPY_n(\s)=\sqrt n \bigl(\PY_n(\s)-\PP_n-\frac{\s K_n}n(G-\tilde \PP_n)\bigr).$$
Lemmas~\ref{LemOne}--\ref{LemThree} give
\begin{equation}
\label{EqUniformityInSigma}
\sup_{\s\in (0,1)}\sup_{h\in \text{BL}_1}\Bigl|\EE \Bigl(h\bigl(\CPY_n(\s)\bigr)\given X_1,\ldots, X_n\Bigr)-\EE h\bigl(\GG(\s)\bigr)\Bigr|\ra 0,
\end{equation}
in probability. This immediately gives that for every data-dependent  $\hat\s_n$ that take their values in the interval $(0,1)$,
$$\sup_{h\in \text{BL}_1}\Bigl|\EE \Bigl(h\bigl(\CPY_n(\hat\s_n)\bigr)\given X_1,\ldots, X_n\Bigr)-\EE h\bigl(\GG(\hat\s_n)\bigr)\Bigr|\ra 0,$$
in probability, where the second expectation is on the limit process $\GG(\hat\s_n)$ for given, fixed $\hat\s_n$.
The continuity of the limit process in $\s$ shows that, for $\hat\s_n\ra \s_0$ in probability,
$$\sup_{h\in \text{BL}_1}\Bigl|\EE h\bigl(\GG(\hat\s_n)\bigr)-\EE h\bigl(\GG(\s_0)\bigr)\Bigr|\ra 0,$$
in probability. Combined the two preceding displays give the first assertion of Theorem~\ref{thm:PYBVMEstimatedSigma}.

For discrete $P_0$ with regularly varying atoms, the convergence of the maximum likelihood estimator
$\hat\s_n$ to its coefficient of regular variation $\s_0\in (0,1)$ is shown in Theorem~\ref{ThmSigma}, and hence
the preceding argument applies.

In a hierarchical Bayesian setup with a prior on $\s$ and given $\s$ the Pitman-Yor prior on $P$, the posterior distribution of $P$ can be decomposed as
\begin{align*}
&\EE \bigl(h\bigl(\CPY_n(\s)\given X_1,\ldots,X_n\bigr)\\
&\qquad=  \int \EE \Bigl(h\bigl(\CPY_n(\s)\bigr)\given \s, X_1,\ldots,X_n\Bigr)\,\Pi_n(d\s\given X_1,\ldots, X_n), 
\end{align*}
where $\Pi_n(d\s\given X_1,\ldots, X_n)$ refers to the posterior distribution of $\s$ given the observations $X_1,\ldots, X_n$,
and $\CPY_n(\s)\given \s, X_1,\ldots,X_n$ is the standardised Pitman-Yor posterior distribution for given $\s$, considered in Theorem~\ref{thm:PYBVM}.
The uniformity \eqref {EqUniformityInSigma} shows that the expectation in the integral on the right side can be replaced
asymptotically by $\EE h\bigl(\GG(\s)\bigr)$, uniformly in $h\in\text{BL}_1$, whenever the posterior distribution of 
$\s$ concentrates  with probability tending to one on the interval $(0,1)$. In particular,
this is true if the posterior distribution of $\s$ is consistent for some value $\s_0\in(0,1)$, i.e.\ if it concentrates asymptotically within
the interval $(\s_0-\e,\s_0+\e)$, for every $\e>0$. This consistency is shown in the proposition below.
Given posterior consistency, by the continuity of the limit process in $\s$, the 
expectation $\EE h\bigl(\GG(\s)\bigr)$ can in turn in the limit be replaced by $\EE h\bigl(\GG(\s_0)\bigr)$, 
uniformly in $h\in\text{BL}_1$.
This gives the second assertion of Theorem~\ref{thm:PYBVMEstimatedSigma}.

\subsection{Estimating the type parameter}
\label{SectionEstimatingType}
A measurable function $\a: [1,\infty)\to \RR_+$ is said to be \emph{regularly varying} (at $\infty$) of order $\g$ if, for all $u>0$, as $n\ra\infty$,
\begin{equation}
\label{EqRV}
\frac{\a(nu)}{\a(n)}\ra u^\g.
\end{equation}
It is known (see e.g.\ \cite{Binghametal} or the appendix to \cite{deHaan})
that if the limit of the sequence of quotients on the left exists for every $u$, then it necessarily has the form $u^\g$, for some $\g$, as in \eqref{EqRV}.
If we write $\a(u)=u^\g L(u)$, then $L$ will be \emph{slowly varying}: a function that is regularly varying of order 0.
Then $\a(n)=n^\g L(n)$, and it can be shown that $n^{\g-\d}\ll \a(n)\ll \a^{n+\d}$, for every $\d>0$, so that the
rate of growth of $\a$ is $n^\g$ to ``first order''. (See Potter's theorem, \cite{Binghametal}, Theorem 1.5.6, or \cite{deHaan}, Proposition~B.1.9-5).

\begin{example}
For the probability distribution $(p_j)_{j\in\NN}$ with $p_j=C/j^\a$, for some $\a>1$, the function
$\a(u):=\# (j: 1/p_j\le u)= \lfloor (Cu)^{1/\a}\rfloor$ is regularly varying of order $\g=1/\a$.
\end{example}

We consider the empirical Bayes estimator $\hat\s_n$, the maximum likelihood estimator in the model
$P\given \s\sim\PY \left( \sigma, M, G \right)$ and $X_1, \ldots, X_n \given P,\s \sim P$ given observations $X_1,\ldots, X_n$.
We also consider the posterior distribution of $\s$ given $X_1,\ldots, X_n$ in the model 
$\s\sim \Pi_\s$, $P\given \s\sim\PY \left( \sigma, M, G \right)$ and $X_1, \ldots, X_n \given P,\s \sim P$, for
a given prior distribution $\Pi_\s$ on $(0,1)$. In both cases the likelihood for observing $X_1,\ldots, X_n$
 is proportional to \eqref{EqLikelihoodSigma}. Hence $\hat\s_n$ is
the point of maximum of this function and, by Bayes theorem, the posterior distribution has density relative to $\Pi_\s$ proportional to 
\eqref{EqLikelihoodSigma}. 

In the following theorem we consider these objects under the assumption that  $X_1,\ldots,X_n$ are an i.i.d.\ sample
from a distribution $P_0$. Consistency of $\hat\s_n$ for $\s_0$ means that $\hat\s_n\ra\s_0$ in probability. Contraction
of the posterior distribution to $\s_0$ means that $\Pi_n(\s: |\s-\s_0|>\e\given X_1,\ldots, X_n)$ tends to zero in probability,
for every $\e>0$.

\begin{theorem}
\label{ThmSigma}
If $P_0$ is discrete with atoms such that $\a_0(u):=\#\{ x: 1/P_0\{x\}\le u\}$
is regularly varying of exponent $\s_0\in(0,1)$, then  the empirical Bayes estimator  $\hat\s_n$ is consistent for $\s_0$.
Furthermore, for a prior distribution $\Pi_\s$  on $\s$ with a density that is bounded away from zero and infinity, 
the posterior distribution of $\s$ contracts to $\s_0$.
\end{theorem}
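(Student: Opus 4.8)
The plan is to analyse the random function $\s\mapsto\ell_n(\s):=\log p_\s(N_{n,1},\ldots,N_{n,K_n})$, where $p_\s$ is the EPPF~\eqref{EqLikelihoodSigma}. Dropping the $\s$-free factor $1/(M+1)^{[n-1]}$ and collecting equal factors, and writing $K_{n,r}$ for the number of blocks of size $r$ and $M_{n,r}:=\#\{j:N_{n,j}>r\}=K_n-\sum_{s=1}^{r}K_{n,s}$, one obtains
\[
\ell_n(\s)=\sum_{i=1}^{K_n-1}\log(M+i\s)+\sum_{r\ge1}M_{n,r}\log(r-\s).
\]
Each summand is concave in $\s$, so $\ell_n$ is strictly concave on the parameter interval once $K_n\ge2$. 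Hence $\hat\s_n$, when it lies in $(0,1)$, is the unique zero of the strictly decreasing function $\ell_n'$, and the posterior density of $\s$ with respect to $\Pi_\s$ is proportional to $e^{\ell_n(\s)}$ with a single mode. Both assertions will follow from one analytic input: after division by $K_n$, the score converges to a deterministic, strictly decreasing limit function having a zero at $\s_0$.

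To get that input I would proceed as follows. Since $\frac{i}{M+i\s}-\frac1\s=\frac{-M}{\s(M+i\s)}$ and $\sum_{i\le K_n}1/i=O(\log K_n)=o(K_n)$, the first block gives $K_n^{-1}\sum_{i=1}^{K_n-1}i/(M+i\s)\to1/\s$, uniformly on compacta of $(0,1)$. For the second block I would invoke the occupancy asymptotics of~\cite{Karlin1967}: if $\a_0$ is regularly varying of index $\s_0\in(0,1)$ then $K_n/\a_0(n)\to\Gamma(1-\s_0)$ a.s., and for each fixed $r$, $K_{n,r}/K_n\to p_r(\s_0):=\s_0(1-\s_0)^{[r-1]}/r!$ a.s.\ (these sum to $1$), whence $M_{n,r}/K_n\to\bar p_r(\s_0):=\sum_{s>r}p_s(\s_0)$ a.s. Since $\bar p_r(\s_0)\asymp r^{-\s_0}$ and the weights $\log(r-\s)$ grow only logarithmically, the limit series converges; passing the per-$r$ limits through the infinite sum needs a uniform-in-$n$ tail bound, which I would derive from $\sum_r M_{n,r}=n-K_n$ together with Karlin's moment estimates for the $M_{n,r}$. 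The conclusion is
\[
\tfrac1{K_n}\ell_n'(\s)\longrightarrow g(\s):=\tfrac1\s-\sum_{r\ge1}\frac{\bar p_r(\s_0)}{r-\s},\qquad\text{a.s., uniformly on compacta of }(0,1).
\]

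Next I would verify $g(\s_0)=0$ and that $g$ is strictly decreasing; the latter is inherited from strict concavity of $\ell_n$. The identity $\sum_{r\ge1}\bar p_r(\s_0)/(r-\s_0)=1/\s_0$ is a finite computation: interchange the order of summation, use $(1-\s_0)^{[r-1]}=\prod_{l=1}^{r-1}(l-\s_0)$, and apply $\sum_{r\ge1}(1-\s_0)^{[r-1]}/r!=1/\s_0$ (a specialisation of $\sum_{r\ge0}a^{[r]}x^r/r!=(1-x)^{-a}$); as a consistency check, $g$ is also the a.s.\ limit of the normalised score when the data really form a Gibbs/Pitman--Yor sample of index $\s_0$, whose score is unbiased at the truth. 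Consistency of $\hat\s_n$ then follows by the usual Z-estimator argument: for small $\d>0$, uniform convergence on $[\s_0-\d,\s_0+\d]$ and the signs $g(\s_0-\d)>0>g(\s_0+\d)$ force $\ell_n'(\s_0-\d)>0>\ell_n'(\s_0+\d)$ eventually, so the unique zero $\hat\s_n$ lies in $(\s_0-\d,\s_0+\d)$; that $\hat\s_n$ exists in $(0,1)$ at all comes from the boundary behaviour ($\ell_n'>0$ near the left endpoint since the first block dominates, and $\ell_n'(\s)\to-\infty$ as $\s\uparrow1$ because the $M_{n,1}\asymp K_n$ blocks of size $\ge2$ contribute $-M_{n,1}/(1-\s)$). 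For the posterior, write $\Pi_n(|\s-\s_0|>\e\given X_1,\ldots,X_n)=\int_{|\s-\s_0|>\e}e^{\ell_n(\s)-\ell_n(\hat\s_n)}\,d\Pi_\s\big/\int e^{\ell_n(\s)-\ell_n(\hat\s_n)}\,d\Pi_\s$. Using $\ell_n'(\hat\s_n)=0$, concavity, and a uniform lower bound $-g'\ge c>0$ near $\s_0$ (with $\hat\s_n\to\s_0$), one gets $\ell_n(\s)-\ell_n(\hat\s_n)\le-\tfrac{c}{4}K_n(\s-\hat\s_n)^2$ on a fixed neighbourhood of $\s_0$ and $\ell_n(\s)-\ell_n(\hat\s_n)\le-c'K_n$ outside it; since $\Pi_\s$ has density bounded away from $0$ and $\infty$ and $K_n\to\infty$ a.s., the numerator is exponentially small in $K_n$ while the denominator is at least of order $K_n^{-1/2}$, so the ratio tends to $0$, which is the claimed contraction.

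The step I expect to be the main obstacle is precisely the uniform-in-$n$ tail control in the second display: upgrading Karlin's per-block-size limits to convergence of the full series $K_n^{-1}\sum_r M_{n,r}\log(r-\s)$ and of $g$, which requires sharp enough control of $M_{n,r}$ for $r$ growing with $n$; the combinatorial identity $g(\s_0)=0$ is mechanical but also needs care. Once these two points are settled, strict concavity of $\ell_n$ makes both the Z-estimator consistency and the Laplace-type posterior argument routine.
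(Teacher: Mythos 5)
Your proposal is structurally the same as the paper's: write the log-EPPF as a strictly concave function of $\s$, show that the normalised score converges to a strictly decreasing deterministic function vanishing at $\s_0$, deduce consistency by a sign/Z-estimator argument, and use concavity to bound the posterior mass outside a neighbourhood of $\s_0$. The differences are in the two technical steps, and they are genuine. (a) For the score limit, you normalise by $K_n$ and invoke Karlin's almost-sure block-frequency limits $K_{n,r}/K_n\ra \s_0\Gamma(r-\s_0)/(r!\Gamma(1-\s_0))$; the paper instead normalises by $\a_0(n)$, rewrites the score as $\sum_j[\s^{-1}1_{M_{n,j}\ge1}-g_\s(M_{n,j})]$ over the atoms, computes its expectation exactly as a regular-variation integral against $d\a_0$ (Lemma~\ref{LemConvergenceExpectations}, via Potter's theorem and dominated convergence), and controls the variance by negative association of the occupancy counts. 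The two limit functions agree: your $\bar p_r(\s_0)$ equals $\Gamma(r+1-\s_0)/(r!\,\Gamma(1-\s_0))$ by telescoping, so $g=\l/\Gamma(1-\s_0)$, and your hypergeometric identity for $g(\s_0)=0$ is the same identity the paper verifies by an integral representation. The one step you leave as a plan — the uniform-in-$n$ tail bound needed to pass the per-$r$ limits through the series $\sum_r M_{n,r}/(r-\s)$ — is exactly where the paper invests its effort; the crude bound $\sum_r M_{n,r}=n-K_n$ is useless here since $n\gg K_n$, so you genuinely need a bound of the form $\E M_{n,r}\lesssim \a_0(n)\,r^{-\s_0+\d}$ uniformly in $n$ and $r$, which is obtainable (e.g.\ by the same integral computation the paper uses for $\E\sum_j g_\s(M_{n,j})$) but is not off-the-shelf from the fixed-$r$ statements in \cite{Karlin1967}. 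Until that bound is supplied your proof of the key display is incomplete, though the route would succeed. (b) For posterior contraction you use a Laplace-type quadratic bound $\ell_n(\s)-\ell_n(\hat\s_n)\le -cK_n(\s-\hat\s_n)^2$; this works, and in fact more easily than you suggest, since $-\ell_n''(\s)\ge (K_n-1)/(M+1)^2$ follows from the first block alone, with a matching upper bound $-\ell_n''\lesssim K_n$ on compacta giving the $K_n^{-1/2}$ lower bound on the denominator. The paper instead uses a tangent-line bound at $\s_0\pm\e$ together with the flat lower bound on $(\hat\s_n,\s_0+\e)$, which needs only the first derivative and yields the slightly cruder but sufficient rate $1/(-\Lambda_n'(\s_0+\e)(\s_0+\e-\hat\s_n))\ra0$. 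Your Laplace argument buys a quantitative contraction rate $K_n^{-1/2}$, which the paper does not claim.
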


\begin{proof}
Up to an additive constant the log likelihood can be written
\begin{align*}
\Lambda_n(\s)&=\sum_{l=1}^{K_n-1}\log (M+l\s)+\sum_{j=1: N_{n,j}\ge 2}^{K_n}\sum_{l=0}^{N_{n,j}-2}\log (1-\s+l)\\
&=\sum_{l=1}^{K_n-1}\log (M+l\s)+\sum_{l=1}^{n-1}\log (l-\s)Z_{n,l+1},
\end{align*}
where $Z_{n,l}=\#(1\le j\le K_n: N_{n,j}\ge l)$ is the number of distinct values of multiplicity at least $l$ in
the sample $X_1,\ldots, X_n$. (In the case that all observations are distinct and hence $N_{n,j}=1$ for every $j$,
the second term of the likelihood is equal to 0.)
The concavity of the logarithm shows that the log likelihood is a strictly concave function of $\s$.
For $\s\downarrow0$, it tends to a finite value, while for $\s\uparrow1$ it tends to $-\infty$ if the term
with  $l=1$ is present in the second sum, i.e.\ if there is at least one tied observation. This happens
with probability tending to 1 as $n\ra\infty$. The derivative of the log likelihood is equal to 
\begin{equation}
\label{EqLambdaprime}
\Lambda_n'(\s)=\sum_{l=1}^{K_n-1}\frac{l}{M+l\s}-\sum_{l=1}^{n-1}\frac1{l-\s}Z_{n,l+1}.
\end{equation}
The left limit at $\s=0$ is $\Lambda_n'(0)=\frac12 K_n(K_n-1)-\sum_{l=1}^{n-1}l^{-1}Z_{n,l+1}$.
Since $Z_{n,l}\le Z_{n,1}=K_n$, a crude bound on the sum is $K_n\log n$, which shows that the derivative
at $\s=0$ tends to infinity if $K_n\gg \log n$. In that case the unique maximum of the log likelihood 
in $[0,1]$ is taken in the interior of the interval, and hence $\hat\s_n$ satisfies $\Lambda_n'(\hat\s_n)=0$.

Under the condition that $\a_0$ is regularly varying of exponent $\s_0\in(0,1)$, the sequence 
$\a_n:=\a_0(n)$ is of the order $n^{\s_0}$ up to slowly varying terms.
By Theorems~9 and~1` of \cite{Karlin1967}, the sequence $K_n/\a_n$ tends almost surely to $\Gamma(1-\s_0)$ 
and hence in particular $K_n\gg \log n$.

We show below that $\Lambda_n'(\s)/\a_n\ra \l(\s)$ in probability, for every $\s$, and a strictly decreasing function 
$\l$ with $\l(\s_0)=0$.
It follows that $\Lambda_n'(\s_0-\e)>0$ and $\Lambda_n'(\s_0+\e)<0$ with probability tending to one, for every fixed $\e>0$.
Then $\s_0-\e<\hat\s_n<\s_0+\e$ with probability tending to one,
by the monotonicity of $\s\mapsto \Lambda_n'(\s)$, and hence the consistency of $\hat\s_n$ follows.

The monotonicity of $\Lambda_n'$ and the fact that $\Lambda_n'(\hat\s_n)=0$, give that on the event $\s_0+\e>\hat\s_n$, 
\begin{alignat*}{3}
\Lambda_n(\s)&\ge\Lambda_n(\s_0+\e),&&\qquad &&\text{ if } \hat\s_n<\s<\s_0+\e,\\
\Lambda_n(\s)&\le \Lambda_n(\s_0+\e)+\Lambda_n'(\s_0+\e)(\s-\s_0-\e),&&\qquad &&\text{ if } \s>\s_0+\e.
\end{alignat*}
It follows that on the event $\s_0+\e>\hat\s_n$, 
\begin{align*}
\Pi_n\bigl (\s>\s_0+\e\given X_1,\ldots, X_n\bigr)
&=\frac{\int_{\s_0+\e}^1 e^{\Lambda_n(\s)}\,d\Pi_\s(\s)}{\int_0^1 e^{\Lambda_n(\s)}\,d\Pi_\s(\s)}\\
&\le \frac{\int_{\s_0+\e}^1 e^{\Lambda_n(\s_0+\e)+\Lambda_n'(\s_0+\e)(\s-\s_0-\e)}\,d\Pi_\s(\s)}
{\int_{\hat\s_n}^{\s_0+\e} e^{\Lambda_n(\s_0+\e)}\,d\Pi_\s(\s)}\\
&\lesssim \frac{\int_0^\infty e^{\Lambda_n'(\s_0+\e)u}\,du}{\s_0+\e-\hat\s_n}=\frac1{-\Lambda_n'(\s_0+\e)(\s_0+\e-\hat\s_n)},
\end{align*}
where the proportionality constant depends on the density of $\Pi_\s$ only. 
Since $-\Lambda_n'(\s_0+\e)/\a_n\ra-\l(\s_0+\e)>0$ and $\s_0+\e-\hat\s_n\ra \e$ in probability, the right side tends to zero
in probability. Combined with a similar argument on the left tail of the posterior distribution, this shows that 
the posterior distribution contracts to $\s_0$.

It remains to be shown that $\Lambda_n'(\s)/\a_n\ra \l(\s)$, in probability for a strictly decreasing function $\l$ with a unique zero at $\s_0$.
The  variables $Z_{n,l}$ can be written as  $Z_{n,l}=\sum_{j=1}^\infty1_{M_{n,j}\ge l}$,
for  $M_{n,j}$  the number of observations equal to $x_j$.  As $K_n=Z_{n,1}$, the
 function $\Lambda_n'$ can be written in the form
$$\Lambda_n'(\s)=\sum_{l=1}^{K_n-1}\frac l{M+l\s}-\sum_{l=1}^{\infty}\sum_{j=1}^\infty\frac{1_{M_{n,j}\ge l+1}}{l-\s}
=\sum_{j=1}^\infty \Bigl[\frac{1_{M_{n,j}\ge 1}}\s-g_\s(M_{n,j})\Bigr]-\frac{h_\s(K_n)}{\s},$$
where $g_\s(0)=g_\s(1)=0$ and $g_\s(m)=\sum_{l=1}^{m-1}\frac {1}{l-\s}$, for $m\ge 2$, and
$h_\s(k)=1+\sum_{l=1}^{k-1}M/(M+l\s)\le 1+(M/\s)\log(1+k\s/M)$.
It is shown in \cite{Karlin1967} (and repeated below) that $\E K_n/\a_n\ra \Gamma(1-\s_0)$ and hence
$\E h_\s(K_n)\le 1+(M/\s)\log (1+\E K_n \s/M)=O(\log n)=o(\a_n)$, so that the term on the far right 
is asymptotically negligible.

It is shown in Lemma~\ref{LemConvergenceExpectations} that 
$$\EE\frac1{\a_n}\sum_{j=1}^\infty \Bigl[\frac{1_{M_{n,j}\ge 1}}\s-g_\s(M_{n,j})\Bigr]
\ra \frac{\Gamma(1-\s_0)}{\s}-\sum_{m=1}^\infty \frac{\Gamma(m+1-\s_0)}{m!(m-\s)}=:\l(\s).$$
The limit function $\l$ is strictly decreasing. The value of the series at $\s=\s_0$ is equal to
\begin{align*}
\sum_{m=1}^\infty \frac{\Gamma(m-\s_0)}{m!}=\int_0^\infty(e^x-1)x^{-\s_0-1}e^{-x}\,dx
=\int_0^\infty(1-e^{-x})x^{-\s_0-1}\,dx.
\end{align*}
By partial integration, this can be further rewritten as $\int_0^\infty x^{-\s_0}/\s_0 \,e^{-x}\,dx=\Gamma(1-\s_0)/\s_0$.
We conclude that $\l(\s_0)=0$.

To complete the proof it suffices to show that the variance of the
variables in the left side of the second last display tend to zero. For $i\not=j$, the conditional distribution of $M_{n,i}$  given $M_{n,j}=m$ is
binomial with parameters $(n-m,p_i)$, which is stochastically smaller than the marginal
binomial $(n,p_i)$ distribution of $M_{n,i}$. It follows that 
$\E(h(M_{n,i})\given M_{n,j}) \le \E h(M_{n,i})$, for
every nondecreasing function $h$, whence $h(M_{n,i})$ and $h(M_{n,j})$ are negatively correlated
for every nonnegative, nondecreasing function $h$.
Applying this with $h(m)=1_{m\ge 1}$ and $h=g_\s$, we find that 
\begin{align*}
\var \frac1{\a_n}\sum_{j=1}^\infty 1_{M_{n,j}\ge 1}&\le \frac1{\a_n^2}\sum_{j=1}^\infty \var 1_{M_{n,j}\ge 1}
\le \frac1{\a_n^2}\sum_{j=1}^\infty \E 1_{M_{n,j}\ge 1},\\
\var \frac1{\a_n}\sum_{j=1}^\infty g_\s(M_{n,j})&\le \frac1{\a_n^2}\sum_{j=1}^\infty \var g_\s(M_{n,j})
\le \frac1{\a_n^2}\sum_{j=1}^\infty \E g_\s^2(M_{n,j}).
\end{align*}
By Lemma~\ref{LemConvergenceExpectations}, both right sides are of the order $O(1/\a_n)$.
This concludes the proof that $\Lambda_n'(\s)/\a_n\ra \l(\s)$, in probability.
\end{proof}

\begin{lemma}
\label{LemConvergenceExpectations}
Suppose that $\a(u):=\#\{ j: 1/p_j\le u\}$ is regularly varying at $\infty$ of order $\g\in (0,1)$.
For any $\s\in (0,1)$, and $g_\s(m)=\sum_{l=1}^{m-1}\frac {1}{l-\s}$, for $m\ge 2$, and 
$M_{n,j}\sim\text{Binomial}(n, p_j)$,
\begin{itemize}
\item[(i)] $\frac{1}{\a(n)}\sum_{j=1}^\infty\E 1_{M_{n,j}\ge 1}\ra\Gamma(1-\g)$,
\item[(ii)] $\frac{1}{\a(n)}\sum_{j=1}^\infty\E g_{\s}(M_{n,j})\ra \sum_{m=1}^\infty \frac{\Gamma(m+1-\g)}{m!(m-\s)}$,
\item[(iii)] $\frac{1}{\a(n)}\sum_{j=1}^\infty\E g_{\s}^2(M_{n,j})\ra \sum_{k=1}^\infty\sum_{l=1}^\infty \frac{\Gamma(k\vee l+1-\g)}{(k-\s)(l-\s)(k\vee l)!}$.
\end{itemize}
\end{lemma}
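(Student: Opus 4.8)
The plan is to reduce all three limits to the single occupancy asymptotic
\[
\frac{1}{\a(n)}\sum_{j\ge1}\Pr(M_{n,j}\ge l)=\frac{\EE Z_{n,l}}{\a(n)}\ra\frac{\Gamma(l-\g)}{(l-1)!},\qquad l\ge1,
\]
where $Z_{n,l}=\sum_{j\ge1}1_{M_{n,j}\ge l}$ is the number of atoms that are hit at least $l$ times. For $l=1$ this is the statement $\EE K_n/\a(n)\ra\Gamma(1-\g)$ taken from \cite{Karlin1967}, and the general case is obtained by the same argument: writing $\Pr(\text{Bin}(n,p)\ge l)=\phi_{n,l}(np)$ with $\phi_{n,l}(t)=\Pr(\text{Bin}(n,t/n)\ge l)$, one has $\phi_{n,l}(t)\ra\phi_l(t):=\Pr(\text{Poisson}(t)\ge l)$ locally uniformly and $\phi_{n,l}(t)\le 1\wedge(t^l/l!)$ (union bound $\Pr(\text{Bin}(n,p)\ge l)\le\binom{n}{l}p^l$), while regular variation of $\a$ makes the point measures $\a(n)^{-1}\sum_j\delta_{np_j}$ converge vaguely on $(0,\infty)$ to $\g\,x^{-\g-1}\dd x$; hence $\a(n)^{-1}\sum_j\phi_{n,l}(np_j)\ra\g\int_0^\infty\phi_l(t)t^{-\g-1}\dd t=\g\sum_{m\ge l}\Gamma(m-\g)/m!$, which telescopes to $\Gamma(l-\g)/(l-1)!$ by $\g\,\Gamma(m-\g)/m!=\Gamma(m-\g)/(m-1)!-\Gamma(m+1-\g)/m!$ — the identity already used to verify $\l(\s_0)=0$ in the proof of Theorem~\ref{ThmSigma}.

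Part (i) is exactly the case $l=1$. For parts (ii) and (iii) I would expand $g_\s$ and $g_\s^2$ into nonnegative series of indicators, $g_\s(m)=\sum_{l\ge1}(l-\s)^{-1}1_{m\ge l+1}$ and $g_\s(m)^2=\sum_{k,l\ge1}\{(k-\s)(l-\s)\}^{-1}1_{m\ge(k\vee l)+1}$; taking expectations and summing over $j$ (Tonelli, all summands nonnegative) gives
\[
\frac{1}{\a(n)}\sum_j\EE g_\s(M_{n,j})=\sum_{l\ge1}\frac{1}{l-\s}\cdot\frac{\EE Z_{n,l+1}}{\a(n)},
\]
\[
\frac{1}{\a(n)}\sum_j\EE g_\s^2(M_{n,j})=\sum_{k,l\ge1}\frac{1}{(k-\s)(l-\s)}\cdot\frac{\EE Z_{n,(k\vee l)+1}}{\a(n)}.
\]
Passing to the limit term by term and using $\EE Z_{n,l+1}/\a(n)\ra\Gamma(l+1-\g)/l!$ produces precisely the asserted limits $\sum_{m\ge1}\Gamma(m+1-\g)/\{m!(m-\s)\}$ in (ii) and $\sum_{k,l\ge1}\Gamma(k\vee l+1-\g)/\{(k-\s)(l-\s)(k\vee l)!\}$ in (iii); both limiting series converge since $\Gamma(l+1-\g)/l!\asymp l^{-\g}$.

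The step I expect to be the main obstacle is justifying this term-by-term passage to the limit in the (doubly) infinite sums. I would do this by dominated convergence, based on a uniform-in-$n$ bound $\EE Z_{n,l}/\a(n)\lesssim l^{-\g+\e}$ for some $\e\in(0,\g)$, valid for all large $n$ (the finitely many small $n$ being harmless, as $Z_{n,l}=0$ once $l>n$). Such a bound follows from $\Pr(\text{Bin}(n,p)\ge l)\le 1\wedge\binom{n}{l}p^l\le 1\wedge(np)^l/l!$ by splitting $\sum_j\{1\wedge(np_j)^l/l!\}$ at the threshold $np_j=(l!)^{1/l}$ and estimating the two pieces with Potter's bounds for the regularly varying $\a$ (\cite{Binghametal}) and Stirling's formula, the dominant contribution being of order $\a(n)\,(l!)^{-\g/l}\asymp\a(n)\,l^{-\g}$ up to slowly varying factors. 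Granting this, the $l$-th summand in (ii) is dominated by a constant times $l^{-1-\g+\e}$, and the $(k,l)$-summand in (iii) by a constant times $(k\vee l)^{-\g+\e}/\{(k-\s)(l-\s)\}$; the former is summable, and the latter is too because summing first over $k\le l$ costs only an $O(\log l)$ factor, leaving $\sum_l l^{-1-\g+\e}\log l<\infty$. Dominated convergence then gives the three limits.
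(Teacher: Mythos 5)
Your proof is correct, and although its skeleton parallels the paper's (expand $g_\s$ and $g_\s^2$ into sums of indicators, reduce to a per-$l$ occupancy asymptotic, and justify the interchange of limit and sum by Potter-type domination), the technical execution is genuinely different. The paper reaches the per-$l$ quantity via an integral representation against $d\a$, Fubini, and the separate binomial identity of Lemma~\ref{LemmaBinomialIdentity}, arriving at $\sum_l (l-\s)^{-1}\EE\,\a(1/S_{l,n})$ with $S_{l,n}\sim\mathrm{Beta}(l+1,n-l)$, and then evaluates $\EE\,\a(1/S_{l,n})/\a(n)\ra\Gamma(l+1-\g)/l!$ through a gamma representation and the locally uniform convergence of regularly varying functions. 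Your route lands in the same place in one line: by the binomial--beta tail identity $\Pr(M_{n,j}\ge l+1)=\Pr(S_{l,n}\le p_j)$ one has $\EE Z_{n,l+1}=\EE\,\a(1/S_{l,n})$, so your summands and the paper's are literally the same objects; you then evaluate their limits by Poissonization and vague convergence of $\a(n)^{-1}\sum_j\delta_{np_j}$ to $\g x^{-\g-1}\,dx$, which is Karlin's original argument and can largely be cited, with the telescoping $\g\,\Gamma(m-\g)/m!=\Gamma(m-\g)/(m-1)!-\Gamma(m+1-\g)/m!$ giving the closed form. What your approach buys is economy (no auxiliary binomial identity, and a transparent identification of the summands as occupancy counts $\EE Z_{n,l}$); what it costs is that you must supply the uniform bound $\EE Z_{n,l}/\a(n)\lesssim l^{-\g+\e}$ yourself. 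That bound is correct and provable along the lines you sketch, e.g.\ from $\Pr(\mathrm{Bin}(n,p)\ge l)\le 1\wedge (enp/l)^l$, splitting at $np_j\asymp l$ and using Potter's and Karamata's theorems; the only point needing a word is that Potter's inequality requires both arguments to exceed a fixed threshold, which matters only for $l\asymp n$ and is disposed of by the monotonicity of $l\mapsto Z_{n,l}$. The paper's corresponding domination, $\EE\bigl(U_{l,n}^{\g+\d}\vee U_{l,n}^{\g-\d}\bigr)\lesssim l^{-\g+\d}$, is the same estimate in its beta--gamma guise.
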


\begin{proof}
Because $\Pr(M_{n,j}=0)=(1-p_j)^n$, the series in the left side of (i) is equal to 
$$\sum_{j=1}^\infty \Bigl(1-(1-p_j)^n\Bigr)=\int_1^\infty\Bigl(1-\Bigl(1-\frac 1 u\Bigr)^n\Bigr)\,d\a(u)
=n\int_0^1\a\Bigl(\frac 1s\Bigr)(1-s)^{n-1}\,ds,$$
by Fubini's theorem, since $1-(1-1/u)^n=\int_0^{1/u}n(1-s)^{n-1}\,ds$. It follows that the left side of (i) can be written
$$\int_0^n\frac{\a(n/s)}{\a(n)} \Bigl(1-\frac s n\Bigr)^{n-1}\,ds.$$
By regular variation of $\a$, the integrand tends pointwise to $s^{-\g}e^{-s}$, as $n\ra\infty$.
By Potter's theorem, the quotient $\a(n/s)/\a(n)$ is bounded above by a multiple of $(1/s)^{\g-\d}\vee (1/s)^{\g+\d}$, for any given $\d>0$,
while $(1-s/n)^{n-1}\le e^{-s(1-\d)}$, by the inequality $1-x\le e^{-x}$, for $x\in\RR$. Therefore, by the dominated convergence theorem
the integral converges to $\int_0^\infty s^{-\g}e^{-s}\,ds=\Gamma(1-\g)$.

The series in the left side of (ii) is equal to 
\begin{align*}
&\sum_{j=1}^\infty \sum_{m=2}^ng_\s(m)\binom n m p_j^m(1-p_j)^{n-m}
=\sum_{m=2}^n g_\s(m)\binom n m\int_1^\infty \!\Bigl(\frac 1u\Bigr)^{m}\Bigl(1-\frac 1u\Bigr)^{n-m}d\a(u).
\end{align*}
Writing $(1/u)^m(1-1/u)^{n-m}=\int_0^{1/u}s^{m-1}(1-s)^{n-m-1}(m-ns)\,ds$ (for $m\ge 1$) and 
applying Fubini's theorem, we can rewrite this as 
\begin{align*}
&\sum_{m=2}^n g_\s(m)\binom n m\int_0^1\a\Bigl(\frac 1s\Bigr) s^{m-1}(1- s )^{n-m-1}(m-ns)\,ds\\
&\qquad=\int_0^1\sum_{l=1}^{n-1}\frac  1{l-\s}\sum_{m=l+1}^n\binom n m s^{m-1}(1- s)^{n-m-1}(m-ns)\,\a\Bigl(\frac 1s\Bigr)\,ds\\
&\qquad=\int_0^1\sum_{l=1}^{n-1}\frac {n-l}{l-\s} \binom n l s^{l}(1-s)^{n-l-1}\,\a\Bigl(\frac 1s\Bigr)\,ds
=\sum_{l=1}^{n-1} \frac 1{l-\s}\E \a\Bigl(\frac1{S_{l,n}}\Bigr), 
\end{align*}
for $S_{l,n}\sim\text{Beta}(l+1,n-l)$, where the second last equality follows from
Lemma~\ref{LemmaBinomialIdentity}. Representing $S_{l,n}$ as $\Gamma_l/(\Gamma_l+\Gamma_{n-l})$, for
independent variables $\Gamma_l\sim \Gamma(l+1,1)$ and $\Gamma_{n-l}\sim \Gamma(n-l,1)$, we see
that the left side of (ii) is equal to 
\begin{align*}
\sum_{l=1}^{n-1}\frac 1{l-\s}\EE \frac{\a\bigl( 1+\Gamma_{n-l}/\Gamma_l\bigr)}{\a(n)}
= \sum_{l=1}^{n-1}\frac1{l-\s}\EE \frac{\a\bigl( (n^{-1}+n^{-1}\Gamma_{n-l}/\Gamma_l)n \bigr)}{\a(n)}.
\end{align*}
The sequence $U_{l,n}:=(n^{-1}+n^{-1}\Gamma_{n-l}/\Gamma_l)$ tends almost surely to $1/\Gamma_l$, by the law
of large numbers, as  $n\ra\infty$, for fixed $l$.
 Since the convergence in \eqref{EqRV} is automatically uniform in compacta contained in $(0,\infty)$
(see \cite{deHaan}, Theorem~B.1.4), it follows that 
$\a( U_{l,n}n)/\a(n)\ra (1/\Gamma_l)^\g$, almost surely.
Furthermore, by Potter's theorem $\a( U_{l,n}n)/\a(n)\lesssim U_{l,n}^{\g+\d}\vee U_{l,n}^{\g-\d}$, 
where $U_{l,n}^\b\le 1+(n^{-1}\Gamma_{n-l})^\b (1/\Gamma_l)^\b$
is uniformly integrable for every $\b<1$, since $n^{-1}\Gamma_{n-l}\ra 1$ in $L_1$ and $\EE  (1/\Gamma_l)^\b<\infty$, so that
$n^{-1}\Gamma_{n-l}/\Gamma_l\ra1/\Gamma_l$ in $L_1$, in view of the independence of $\Gamma_{n-l}$ and $\Gamma_l$.
By dominated convergence we conclude that $\EE \a( U_{l,n}n)/\a(n)\ra \E  (1/\Gamma_l)^\g=\Gamma(l+1-\g)/l!$.
Since $\E U_{l,n}^{\g+\d}\vee U_{l,n}^{\g-\d}\lesssim \E (1/\Gamma_l)^{-\g+\d}\lesssim l^{-\g+\d}$, 
a second application of the dominated convergence theorem shows that the
preceding display tends to $\sum_{l=1}^\infty (l-\s)^{-1}\Gamma(l+1-\g)/l!$.

For the proof of (iii) we write $g_\s^2(m)=\sum_{k=1}^{m-1}\sum_{l=1}^{m-1}(k-\s)^{-1}(l-\s)^{-1}$ 
and follow the same steps as in (ii) to write the left side of (iii) as
$$\sum_{k=1}^{n-1}\sum_{l=1}^{n-1} \frac1{k-\s}\frac 1{l-\s}\EE \frac{\a(1/S_{k\vee l,n})}{\a(n)}.$$
This is seen to converge to the limit as claimed by the same arguments as under (ii).
\end{proof}

\begin{lemma}
\label{LemmaBinomialIdentity}
For every $p\in [0,1]$ and $l\in\NN\cup\{0\}$ and $n\in \NN$,
$$\sum_{m=l+1}^n\binom n m p^{m-1}(1-p)^{n-m-1}(m-np)=(n-l)\binom nlp^l(1-p)^{n-l-1}.$$
\end{lemma}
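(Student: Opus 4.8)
The plan is to prove the identity by recognising the summand on the left-hand side as a telescoping difference. It is cleanest to view the claim as an identity of polynomials in $p$: each summand simplifies to a polynomial, since the only apparent singularity, the factor $(1-p)^{-1}$ occurring when $m=n$, is cancelled by the accompanying factor $m-np=n(1-p)$. Hence it suffices to verify the identity for $p\in(0,1)$, where all expressions are finite, and then invoke continuity (or simply equality of polynomials) to cover $p\in\{0,1\}$.

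The key algebraic step is to split $m-np=m(1-p)-(n-m)p$ and apply the elementary identities $m\binom{n}{m}=n\binom{n-1}{m-1}$ and $(n-m)\binom{n}{m}=n\binom{n-1}{m}$. With $c_m:=n\binom{n-1}{m-1}p^{m-1}(1-p)^{n-m}$ this gives
\begin{align*}
\binom{n}{m}p^{m-1}(1-p)^{n-m-1}(m-np)
&= n\binom{n-1}{m-1}p^{m-1}(1-p)^{n-m}-n\binom{n-1}{m}p^{m}(1-p)^{n-m-1}\\
&= c_m-c_{m+1}.
\end{align*}
Summing over $m=l+1,\dots,n$ telescopes to $c_{l+1}-c_{n+1}$. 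Now $c_{n+1}=n\binom{n-1}{n}p^{n}(1-p)^{-1}=0$ because $\binom{n-1}{n}=0$, while $c_{l+1}=n\binom{n-1}{l}p^{l}(1-p)^{n-l-1}$. Finally, $n\binom{n-1}{l}=(n-l)\binom{n}{l}$ turns this into exactly the right-hand side.

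I do not foresee a genuine obstacle here; the only point deserving a word of care is the bookkeeping at $p=1$, handled as explained in the first paragraph. As a consistency check one may note the analytic interpretation $\binom{n}{m}p^{m-1}(1-p)^{n-m-1}(m-np)=\frac{d}{dp}\bigl[\binom{n}{m}p^{m}(1-p)^{n-m}\bigr]$, so that the left-hand side equals $\frac{d}{dp}\sum_{m=l+1}^{n}\binom{n}{m}p^{m}(1-p)^{n-m}$, i.e.\ minus the derivative of the $\mathrm{Binomial}(n,p)$ distribution function evaluated at $l$, which is the classical quantity $n\binom{n-1}{l}p^{l}(1-p)^{n-1-l}$; this matches the telescoping computation above since $n\binom{n-1}{l}=(n-l)\binom{n}{l}$.
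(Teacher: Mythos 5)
Your proof is correct. It takes a different route from the paper's: you split $m-np=m(1-p)-(n-m)p$, convert each summand into a difference $c_m-c_{m+1}$ via the absorption identities $m\binom nm=n\binom{n-1}{m-1}$ and $(n-m)\binom nm=n\binom{n-1}{m}$, and telescope. The paper instead argues probabilistically: writing $X_{n-1}$ and $X_n$ for the success counts in the first $n-1$ and $n$ Bernoulli$(p)$ trials, it observes that $\{X_{n-1}\ge l\}\setminus\{X_n\ge l+1\}=\{X_{n-1}=l,\,B_n=0\}$, hence $\Pr(X_{n-1}\ge l)-\Pr(X_n\ge l+1)=\Pr(X_{n-1}=l)(1-p)$, and then multiplies by $n/(1-p)$ and reindexes to recover the stated identity (the same two binomial identities appear there in the reindexing). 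Both arguments are elementary and rest on the same combinatorial facts; yours is a line-by-line algebraic verification that is entirely self-contained and deals explicitly with the boundary term $m=n$ (where the apparent factor $(1-p)^{-1}$ is cancelled by $m-np=n(1-p)$), while the paper's coupling of the $\mathrm{Bin}(n-1,p)$ and $\mathrm{Bin}(n,p)$ tails explains \emph{why} the identity holds and connects it to the derivative-of-the-binomial-CDF interpretation that you mention only as a consistency check. Either proof is acceptable; no gap.
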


\begin{proof}
For $X_{n-1}$ and $X_n$ the numbers of successes in the first $n-1$ and $n$ independent Bernoulli
trials with success probability $p$,  we have $\{X_n\ge l+1\}\subset\{X_{n-1}\ge l\}$ and
$\{X_{n-1}\ge l\}-\{X_n\ge l+1\}=\{X_{n-1}=l, B_n=0\}$, for $B_n$ the outcome of the $n$th trial.
This gives the identity $\Pr(X_{n-1}\ge l)-\Pr(X_n\ge l+1)= \Pr(X_{n-1}=l)(1-p)$.
We multiply this by $n/(1-p)$ to obtain the identity given by the lemma, which we first rewrite using
that $m\binom n m=n\binom {n-1}{m-1}$ and $(n-l)\binom n l= n\binom {n-1} l$.
\end{proof}

Finally consider the situation that $P_0$ possesses a nontrivial continuous component. In this
case the empirical Bayes estimator tends to 1.

\begin{theorem}
If $P_0 = (1 - \lambda) P_0^d + \lambda P_0^c$ where $P_0^d$ is a discrete  and $P_0^c$ an atomless probability distribution
with $\l>0$ and such that $\a_0(u):=\#\{ x: 1/P_0\{x\}\le u\}$ is regularly varying of exponent $\s_0\in(0,1)$, then $\hat\s_n\ra 1$
in probability.
\end{theorem}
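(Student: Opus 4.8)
The plan is to exploit that a nontrivial atomless component makes $K_n$ grow linearly in $n$, by Lemma~\ref{LemmaConvergenceKn} ($K_n/n\to\lambda>0$), so that the first sum in the score $\eqref{EqLambdaprime}$ overwhelms the second, pushing the maximiser of the strictly concave log likelihood towards the boundary value $1$.

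First I would set up the optimisation. Since $\alpha_0$ is regularly varying of positive exponent, $P_0^d$ has infinitely many atoms, so $M_{n,j}=\#\{i\le n: X_i=x_j\}\to\infty$ almost surely for each atom $x_j$, whence $Z_{n,2}\to\infty$ almost surely and the term with $l=1$ is eventually present in the second sum defining $\Lambda_n$. As in the proof of Theorem~\ref{ThmSigma}, $\Lambda_n$ is then strictly concave on $[0,1)$ with $\Lambda_n(\sigma)\to-\infty$ as $\sigma\uparrow1$, so the maximiser $\hat\sigma_n\in[0,1)$ exists and is unique. It therefore suffices to prove that, for each $\eps>0$, with probability tending to one $\Lambda_n'(\sigma)>0$ for all $\sigma\in(0,1-\eps]$: this makes $\Lambda_n$ strictly increasing on $[0,1-\eps]$, hence $\hat\sigma_n>1-\eps$, and letting $\eps\downarrow0$ gives $\hat\sigma_n\to1$.

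For the first sum in $\eqref{EqLambdaprime}$ I would use the elementary bound $l/(M+l\sigma)\ge l/(M+l)\ge 1/(M+1)$, valid for every $l\ge1$ and $\sigma\in[0,1)$ (here $M\ge0$, as required for the Pitman-Yor prior to be defined for all $\sigma\in[0,1)$; a small negative $M$ is handled after discarding finitely many terms), so that $\sum_{l=1}^{K_n-1}l/(M+l\sigma)\ge (K_n-1)/(M+1)\ge \lambda n/(2(M+1))$ eventually almost surely, uniformly in $\sigma$. For the second sum I would show it is $o(n)$ almost surely, uniformly over $\sigma\le1-\eps$. Using $l-\sigma\ge\eps$ when $l=1$ and $l-\sigma\ge l-1$ when $l\ge2$,
\[
\sum_{l=1}^{n-1}\frac{Z_{n,l+1}}{l-\sigma}\le \frac{Z_{n,2}}{\eps}+\sum_{l=2}^{n-1}\frac{Z_{n,l+1}}{l-1}.
\]
Since $Z_{n,2}\le K_n^d$, the first term is $o(n)$ by Lemma~\ref{LemmaConvergenceKn}. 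For the second, I would split at a fixed level $L$: for $l\le L$ bound $Z_{n,l+1}\le K_n^d$ to get at most $K_n^d(1+\log L)$, and for $l>L$ bound $1/(l-1)\le 1/L$ and use $\sum_{l\ge1}Z_{n,l+1}=\sum_j(M_{n,j}-1)_{+}\le n$ to get at most $n/L$; taking $L$ arbitrarily large and using $K_n^d=o(n)$ shows the second term is $o(n)$ almost surely.

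Combining the two estimates, $\Lambda_n'(\sigma)\ge \lambda n/(2(M+1))-o(n)>0$ for all $\sigma\in(0,1-\eps]$, eventually almost surely, which yields $\hat\sigma_n\to1$ almost surely, and in particular in probability. There is no deep obstacle; the only mildly delicate point is organising the uniform-in-$\sigma$ tail estimate for $\sum_l Z_{n,l+1}/(l-\sigma)$, so that it remains $o(n)$ even though it is a sum of up to $n$ positive terms of total mass $\Theta(n)$. The conceptual content is simply that the $\Theta(n)$ contribution of the atomless part to $K_n$ dominates the $o(n)$ penalty coming from the ties, which is exactly what drives the empirical Bayes type estimate to the boundary value $1$.
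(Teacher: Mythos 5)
Your proof is correct and rests on the same core observation as the paper's: since $K_n/n\to\lambda>0$ (Lemma~\ref{LemmaConvergenceKn}), the first sum in $\Lambda_n'$ is of exact order $n$, while the tie term involves only the discrete component and is $o(n)$, so the score stays positive on $(0,1-\eps]$ and the concave likelihood is maximised beyond $1-\eps$. Where you differ is in how the second sum is controlled: the paper recycles the expectation computations behind Theorem~\ref{ThmSigma} to conclude that it is $O_P(\a_0(n))$, which uses the regular-variation hypothesis, whereas your splitting $\sum_{l}Z_{n,l+1}/(l-\s)\le Z_{n,2}/\eps+K_n^d(1+\log L)+n/L$, combined with $K_n^d=o(n)$ and $\sum_{l\ge1}Z_{n,l+1}=n-K_n\le n$, is purely deterministic given the sample and needs no regular variation of $\a_0$ at all. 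Your route is therefore slightly more general (the hypothesis on $\a_0$ in the statement is only needed to make the empirical Bayes estimator well defined via the arguments of Theorem~\ref{ThmSigma}, not for the divergence to $1$) and also delivers almost sure rather than in-probability convergence; the paper's route buys the sharper limit $\Lambda_n'(\s)/n\to\l/\s$, which your crude lower bound $l/(M+l\s)\ge1/(M+1)$ forgoes but does not need.
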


\begin{proof}
By Lemma~\ref{LemmaConvergenceKn} the sequence $K_n/n$ tends to $\l$ in probability. The 
second term in the derivative of the log likelihood \eqref{EqLambdaprime} depends on tied observations only
(through the variables $Z_{n,l}$ with $l\ge 2$), and the arguments from the proof of Theorem~\ref{ThmSigma}
show that this term retains the order $O_P(\a_0(n))$. Thus it follows that $\Lambda_n'(\s)/n\ra \l/\s$ in probability,
whence it is positive with probability tending to one and the likelihood increasing in $\s$.
\end{proof}

\section{Acknowledgements}
The authors would like to thank Botond Szab\'o for his extensive feedback. 
This work has been presented several times and the ensuing discussions and remarks by the
referees helped identify where the exposition could be improved.  

\bibliographystyle{acm}
\bibliography{BvMPYbib}

\begin{thebibliography}{10}

\bibitem{Arbel2018}
{\sc Arbel, J., De~Blasi, P., and Pr{\"u}nster, I.}
\newblock Stochastic approximations to the {P}itman-{Y}or process.
\newblock {\em Bayesian Analysis\/} (2018).
\newblock Advance publication.

\bibitem{Binghametal}
{\sc Bingham, N.~H., Goldie, C.~M., and Teugels, J.~L.}
\newblock {\em Regular variation}, vol.~27 of {\em Encyclopedia of Mathematics
  and its Applications}.
\newblock Cambridge University Press, Cambridge, 1989.

\bibitem{Camerlenghietal}
{\sc Camerlenghi, F., Dunson, D.~B., Lijoi, A., Pr\"{u}nster, I., and
  Rodr\'{\i}guez, A.}
\newblock Latent nested nonparametric priors (with discussion).
\newblock {\em Bayesian Anal. 14}, 4 (2019), 1303--1356.
\newblock With discussions and a rejoinder.

\bibitem{Lijoietal2019}
{\sc Camerlenghi, F., Lijoi, A., Orbanz, P., and Pr\"{u}nster, I.}
\newblock Distribution theory for hierarchical processes.
\newblock {\em Ann. Statist. 47}, 1 (2019), 67--92.

\bibitem{Cereda2017}
{\sc Cereda, G.}
\newblock {\em Current challenges in statistical DNA evidence evaluation}.
\newblock PhD thesis, Leiden University, 2017.

\bibitem{Cereda2019}
{\sc Cereda, G., and Gill, R.~D.}
\newblock A nonparametric bayesian approach to the rare type match problem,
  2019.

\bibitem{DeBlasiLijoiandPrunster(2013)}
{\sc De~Blasi, P., Lijoi, A., and Pr{\"u}nster, I.}
\newblock An asymptotic analysis of a class of discrete nonparametric priors.
\newblock {\em Statistica Sinica 23}, 3 (2013), 1299--1321.

\bibitem{deBlasi2015}
{\sc de~Blasi~et al.}
\newblock Are {G}ibbs-type priors the most natural generalization of the
  {D}irichlet process?
\newblock {\em IEEE Transactions on Pattern Analysis and Machine Intelligence
  37}, 2 (2015), 212--229.

\bibitem{deHaan}
{\sc de~Haan, L., and Ferreira, A.}
\newblock {\em Extreme value theory}.
\newblock Springer Series in Operations Research and Financial Engineering.
  Springer, New York, 2006.
\newblock An introduction.

\bibitem{Donsker}
{\sc Donsker, M.~D.}
\newblock Justification and extension of {D}oob's heuristic approach to the
  {K}omogorov-{S}mirnov theorems.
\newblock {\em Ann. Math. Statistics 23\/} (1952), 277--281.

\bibitem{favaro2021nearoptimal}
{\sc Favaro, S., and Naulet, Z.}
\newblock Near-optimal estimation of the unseen under regularly varying tail
  populations, 2021.

\bibitem{Ferguson(1974)}
{\sc Ferguson, T.}
\newblock Prior distributions on spaces of probability measures.
\newblock {\em Ann. Statist. 2\/} (1974), 615--629.

\bibitem{FNBI}
{\sc Ghosal, S., and van~der Vaart, A.}
\newblock {\em Fundamentals of Nonparametric Bayesian Inference}.
\newblock Cambridge University Press, 2017.

\bibitem{Goldwater2005}
{\sc Goldwater, S., Griffiths, T.~L., and Johnson, M.}
\newblock Interpolating between types and tokens by estimating power-law
  generators.
\newblock In {\em Advances in neural information processing systems\/} (2005).

\bibitem{IshwaranJames}
{\sc Ishwaran, H., and James, L.~F.}
\newblock Gibbs sampling methods for stick-breaking priors.
\newblock {\em Journal of the American Statistical Association 96}, 453 (2001),
  161--173.

\bibitem{James2008}
{\sc James, L.}
\newblock Large sample asymptotics for the two-parameter {P}oisson-{D}irichlet
  process.
\newblock {\em Pushing the limits of contemporary Statistics: Contributions in
  Honor of Jayanta k. Ghosh 3\/} (2008).

\bibitem{Karlin1967}
{\sc Karlin, S.}
\newblock Central limit theorems for certain infinite urn schemes.
\newblock {\em Journal of Mathematics and Mechanics 17}, 4 (1967).

\bibitem{Lo(1983)}
{\sc Lo, A.}
\newblock Weak convergence for {D}irichlet processes.
\newblock {\em Sankhy\=a Ser. A 45}, 1 (1983), 105--111.

\bibitem{Lo(1986)}
{\sc Lo, A.}
\newblock A remark on the limiting posterior distribution of the multiparameter
  {D}irichlet process.
\newblock {\em Sankhy\=a Ser. A 48}, 2 (1986), 247--249.

\bibitem{PermanPitmanandYor(1992)}
{\sc Perman, M., Pitman, J., and Yor, M.}
\newblock Size-biased sampling of {P}oisson point processes and excursions.
\newblock {\em Probab. Theory Related Fields 92}, 1 (1992), 21--39.

\bibitem{Pitman(1996b)}
{\sc Pitman, J.}
\newblock Random discrete distributions invariant under size-biased
  permutation.
\newblock {\em Adv. in Appl. Probab. 28}, 2 (1996), 525--539.

\bibitem{Pitman1996b}
{\sc Pitman, J.}
\newblock Some developments of the {B}lackwell-{M}ac{Q}ueen urn scheme.
\newblock {\em Institute of Mathematical Statistics Lecture Notes - Monograph
  Series 30\/} (1996), 245--267.

\bibitem{Pitman(2003)}
{\sc Pitman, J.}
\newblock Poisson-{K}ingman partitions.
\newblock In {\em Statistics and {S}cience: a {F}estschrift for {T}erry
  {S}peed}, vol.~40 of {\em IMS Lecture Notes Monogr. Ser.} Inst. Math.
  Statist., Beachwood, OH, 2003, pp.~1--34.

\bibitem{PitmanandYor(1997)}
{\sc Pitman, J., and Yor, M.}
\newblock The two-parameter {P}oisson-{D}irichlet distribution derived from a
  stable subordinator.
\newblock {\em Ann. Probab. 25}, 2 (1997), 855--900.

\bibitem{Pollard(1984)}
{\sc Pollard, D.}
\newblock {\em Convergence of {S}tochastic {P}rocesses}.
\newblock Springer Series in Statistics. Springer-Verlag, New York, 1984.

\bibitem{Teh2006}
{\sc Teh, Y.~W.}
\newblock A hierarchical {B}ayesian language model based on {P}itman-{Y}or
  processes.
\newblock In {\em ACL-44 Proceedings of the 21st International Conference on
  Computational Linguistics and the 44th annual meeting of the Association for
  Computational Linguistics\/} (2006), pp.~985--992.

\bibitem{vanderVaart(1998)}
{\sc van~der Vaart, A.}
\newblock {\em Asymptotic {S}tatistics}, vol.~3 of {\em Cambridge Series in
  Statistical and Probabilistic Mathematics}.
\newblock Cambridge University Press, Cambridge, 1998.

\bibitem{WCEP}
{\sc van~der Vaart, A., and Wellner, J.}
\newblock {\em Weak convergence and Empirical processes}.
\newblock Springer-Verlag, 1996.

\bibitem{vdVWPreservation}
{\sc van~der Vaart, A., and Wellner, J.~A.}
\newblock Preservation theorems for {G}livenko-{C}antelli and uniform
  {G}livenko-{C}antelli classes.
\newblock In {\em High dimensional probability, {II} ({S}eattle, {WA}, 1999)},
  vol.~47 of {\em Progr. Probab.} Birkh\"auser Boston, Boston, MA, 2000,
  pp.~115--133.

\bibitem{Wood2009}
{\sc Wood, F., Archambeau, C., Gasthaus, J., James, L., and Teh, Y.~W.}
\newblock A stochastic memoizer for sequence data.
\newblock In {\em Proceedings of the 26th Annual International Conference on
  Machine Learning\/} (New York, NY, USA, 2009), ICML '09, Association for
  Computing Machinery, p.~1129–1136.

\end{thebibliography}

\newpage
\appendix

\section{Mean and variance of posterior distribution}
In this appendix we derive explicit formulas for the mean and variance of the posterior distribution. 
The limit of the variances can be seen to be equal to variance of the limit variable in Theorem~\ref{thm:PYBVM}.

\begin{lemma}\label{lem:PosteriorMeanAndVariance}
Let $P \sim \PY(\sigma, M, G)$ where $\sigma \geq 0$. Then the mean and variance of the posterior distribution of $P$ based on observations $X_1, \ldots, X_n | P\overset{\text{iid}}{\sim} P$ are as follows
\begin{align*}
\EE[ Pf | X_1,\ldots, X_n] &= \sum_{j = 1}^{K_n} \frac{N_{j,n} - \sigma}{n + M} f(\tilde{X}_j) + \frac{M + \sigma K_n}{n+M} Gf,\\
\var\left( Pf | X_1, \ldots, X_n \right) 
&=  \Bigl[\sum_{j = 1}^{K_n} \frac{N_{j,n} - \sigma}{n - K_n \sigma} f(\tilde{X}_j) - Gf\Bigr]^2 
\frac{(n - \sigma K_n)(M + \sigma K_n)}{(n + M)^2(n + M + 1)} \\
&- \frac{\left( \sum_{j = 1}^{K_n} (N_{j,n} - \sigma) f(\tilde{X}_j)\right)^2 }{(n - \sigma K_n)(n +M )(n + M + 1)} 
+ \frac{\sum_{j = 1}^{K_n} (N_{j,n} - \sigma) f(\tilde{X}_j)^2}{(n +M )(n + M + 1)} \\
 & \qquad + \frac{( 1 - \sigma)(M + \sigma K_n + 1)}{(n +M )(n + M + 1)} \text{Var}_G(f).
\end{align*}
\end{lemma}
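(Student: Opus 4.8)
The starting point is the posterior characterisation used throughout Section~\ref{SectionProofs}: conditionally on $X_1,\ldots,X_n$, the law of $Pf$ is that of $R_n(S_nf)+(1-R_n)(Q_nf)$, where the three quantities $R_n$, $S_nf=\sum_{j=1}^{K_n}W_{n,j}f(\tilde X_j)$ and $Q_nf$ are mutually independent, with $R_n\sim\Beta(n-\s K_n,M+\s K_n)$, $(W_{n,1},\ldots,W_{n,K_n})\sim\Dir(K_n;N_{1,n}-\s,\ldots,N_{K_n,n}-\s)$, and $Q_n\sim\PY(\s,M+\s K_n,G)$. All moments below are conditional on the data, and the whole argument reduces to inserting the first two conditional moments of these three ingredients into elementary identities.

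For the mean, linearity and independence give $\EE(Pf\mid X_1,\ldots,X_n)=\EE R_n\cdot\EE(S_nf)+\bigl(1-\EE R_n\bigr)\cdot\EE(Q_nf)$. Here $\EE R_n=(n-\s K_n)/(n+M)$ is the beta mean; $\EE(S_nf)=\sum_j\bigl((N_{j,n}-\s)/(n-\s K_n)\bigr)f(\tilde X_j)$, since the mean of $W_{n,j}$ is $N_{j,n}-\s$ divided by $\sum_i(N_{i,n}-\s)=n-\s K_n$; and $\EE(Q_nf)=Gf$, the mean of a Pitman--Yor process being its centre measure. Multiplying out and combining the two occurrences of $n-\s K_n$ gives the stated mean formula.

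For the variance the key point is that conditioning on $R_n$ (equivalently, expanding the square of $R_n(S_nf)+(1-R_n)(Q_nf)$ and using independence) leaves a clean three--term decomposition,
\[
\var\bigl(Pf\mid X_1,\ldots,X_n\bigr)=\bigl(\EE(S_nf)-Gf\bigr)^2\var R_n+\EE\bigl(R_n^2\bigr)\,\var(S_nf)+\EE\bigl((1-R_n)^2\bigr)\,\var(Q_nf),
\]
with no surviving cross term, because $\EE\bigl(R_n(1-R_n)\bigr)-\EE R_n\,\EE(1-R_n)=-\var R_n$ and $\var(1-R_n)=\var R_n$, so the mean contributions telescope. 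It then remains to substitute: the beta second moments $\var R_n=\tfrac{(n-\s K_n)(M+\s K_n)}{(n+M)^2(n+M+1)}$, $\EE(R_n^2)=\tfrac{(n-\s K_n)(n-\s K_n+1)}{(n+M)(n+M+1)}$ and $\EE((1-R_n)^2)=\tfrac{(M+\s K_n)(M+\s K_n+1)}{(n+M)(n+M+1)}$; the Dirichlet covariance structure $\var W_{n,i}=a_i(a_0-a_i)/\bigl(a_0^2(a_0+1)\bigr)$, $\cov(W_{n,i},W_{n,j})=-a_ia_j/\bigl(a_0^2(a_0+1)\bigr)$ with $a_i=N_{i,n}-\s$ and $a_0=n-\s K_n$, which after collecting terms yields
\[
\var(S_nf)=\frac{a_0\sum_i a_if(\tilde X_i)^2-\bigl(\sum_i a_if(\tilde X_i)\bigr)^2}{a_0^2(a_0+1)};
\]
and the variance of a $\PY(\s,M+\s K_n,G)$ process at $f$, which is a multiple of $\var_G(f)$ (cf.\ Proposition~14.34 in \cite{FNBI}). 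Plugging these in, the $\var R_n$ term is the first displayed term of the lemma; in the product $\EE(R_n^2)\,\var(S_nf)$ the factor $a_0+1=n-\s K_n+1$ cancels against the beta numerator, and the remainder splits into the second (squared--sum) and third (sum--of--squares) displayed terms; and the last term comes from $\EE((1-R_n)^2)\,\var(Q_nf)$.

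There is no genuine obstacle beyond careful bookkeeping. The two things to get right are: (i) that the three--term decomposition above is exact, so that the only mean--dependent contribution is the single term $\bigl(\EE(S_nf)-Gf\bigr)^2\var R_n$; and (ii) that the $a_0^2(a_0+1)$ denominator in $\var(S_nf)$ cancels correctly against $\EE(R_n^2)$ so as to leave precisely the denominators $(n+M)(n+M+1)$ and $(n-\s K_n)(n+M)(n+M+1)$ appearing in the statement. Once these cancellations are performed, the four terms are simply read off.
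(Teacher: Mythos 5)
Your proof is correct and follows essentially the same route as the paper: the same posterior representation $R_nS_n+(1-R_n)Q_n$ with independent components, and your three-term variance decomposition $(\EE S_nf-Gf)^2\var R_n+\EE(R_n^2)\var(S_nf)+\EE((1-R_n)^2)\var(Q_nf)$ is algebraically identical to the paper's application of the law of total variance conditioning on $R_n$. The beta, Dirichlet and Pitman--Yor moment substitutions and the cancellation of $a_0+1$ against $\EE(R_n^2)$ all match the paper's computation.
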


\begin{lemma}\label{lem:LimitPosteriorMeanAndVariance}
Suppose $X_1, \ldots, X_n \overset{\text{iid}}{\sim} P_0$, where $P_0 = (1 - \lambda)P_0^d + \lambda P_0^c$. If $P$ follows a $\PY \left( \sigma, M, G \right)$ process, then the posterior distribution in the model $X_1, \ldots, X_n | P \sim P$, $P_0$ almost surely
\begin{align*}
\EE[ Pf | X_1, \ldots, X_n] &\rightarrow (1 - \lambda) P_0^d + (1 - \sigma)\lambda  P_0^c + \lambda \sigma G\\
n \var\left( Pf | X_1, \ldots, X_n \right) &\rightarrow 
( 1- \lambda) \text{Var}_{P_0^d}(f) + (1 - \sigma)\lambda \text{Var}_{P_0^c}(f)\\
& + (1 - \sigma)\sigma\lambda \text{Var}_G(f) \\
& + \frac{(1 - \sigma) \lambda(1 - \lambda)}{1 - \sigma \lambda} \left( P_0^d(f) - P_0^c(f) \right)^2\\
&+(1 - \sigma \lambda) \sigma \lambda \left(\frac{(1 - \lambda) P_0^d(f) + (1 - \sigma)\lambda P_0^c(f)}{1 - \sigma \lambda} - Gf\right)^2.
\end{align*}
\end{lemma}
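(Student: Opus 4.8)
The plan is to substitute the closed-form expressions of Lemma~\ref{lem:PosteriorMeanAndVariance} and pass to the limit, so that everything reduces to the almost sure laws of large numbers already established in Lemmas~\ref{LemmaConvergenceKn} and~\ref{LemmaLLNDistinctSc}. The computation is organised around three observations. First, $\sum_{j=1}^{K_n}N_{j,n}f(\tilde X_j)=\sum_{i=1}^n f(X_i)=n\PP_nf$ and $\sum_{j=1}^{K_n}f(\tilde X_j)=K_n\tilde\PP_nf$, so $\sum_{j=1}^{K_n}(N_{j,n}-\sigma)f(\tilde X_j)=n\,T_nf$ with $T_nf:=\PP_nf-(\sigma K_n/n)\tilde\PP_nf$; applying Lemma~\ref{LemmaLLNDistinctSc} to the singleton class $\{f\}$, which is $P_0$-Glivenko--Cantelli with an integrable envelope once $P_0f^2<\infty$, gives $T_nf\to Tf:=(1-\lambda)P_0^df+(1-\sigma)\lambda P_0^cf$ almost surely. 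Second, the same applied to $\{f^2\}$ gives $\sum_{j=1}^{K_n}(N_{j,n}-\sigma)f^2(\tilde X_j)=n\,T_n(f^2)$ with $T_n(f^2)\to T(f^2):=(1-\lambda)P_0^df^2+(1-\sigma)\lambda P_0^cf^2$ almost surely. Third, $K_n/n\to\lambda$ almost surely by Lemma~\ref{LemmaConvergenceKn}, hence $(M+\sigma K_n)/n\to\sigma\lambda$ and $(n-\sigma K_n)/n\to 1-\sigma\lambda>0$ (recall $\sigma<1$).

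First I would settle the posterior mean: by Lemma~\ref{lem:PosteriorMeanAndVariance} it equals $\frac{n}{n+M}T_nf+\frac{M+\sigma K_n}{n+M}Gf$, which by the three observations tends to $Tf+\sigma\lambda Gf=(1-\lambda)P_0^df+(1-\sigma)\lambda P_0^cf+\lambda\sigma Gf$ almost surely. Next I would multiply the four-term formula for $\var(Pf\mid X_1,\dots,X_n)$ by $n$ and pass to the limit term by term, using $\sum_{j=1}^{K_n}\frac{N_{j,n}-\sigma}{n-\sigma K_n}f(\tilde X_j)=T_nf/(1-\sigma K_n/n)\to Tf/(1-\sigma\lambda)$. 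The term with prefactor $\frac{(n-\sigma K_n)(M+\sigma K_n)}{(n+M)^2(n+M+1)}$ contributes, after multiplication by $n$, the limit $(1-\sigma\lambda)\sigma\lambda\bigl(Tf/(1-\sigma\lambda)-Gf\bigr)^2$, since $\frac{n(n-\sigma K_n)(M+\sigma K_n)}{(n+M)^2(n+M+1)}\to(1-\sigma\lambda)\sigma\lambda$; the term $-\frac{(nT_nf)^2}{(n-\sigma K_n)(n+M)(n+M+1)}$ contributes $-(Tf)^2/(1-\sigma\lambda)$; the term $\frac{nT_n(f^2)}{(n+M)(n+M+1)}$ contributes $T(f^2)$; and the last term contributes $(1-\sigma)\sigma\lambda\,\text{Var}_G(f)$, using $Gf^2<\infty$. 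Collecting these, $n\var(Pf\mid X_1,\dots,X_n)\to T(f^2)-\frac{(Tf)^2}{1-\sigma\lambda}+(1-\sigma)\sigma\lambda\,\text{Var}_G(f)+(1-\sigma\lambda)\sigma\lambda\Bigl(\tfrac{(1-\lambda)P_0^d f+(1-\sigma)\lambda P_0^c f}{1-\sigma\lambda}-Gf\Bigr)^2$, and the last two summands already match the asserted form.

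It then remains to identify $T(f^2)-(Tf)^2/(1-\sigma\lambda)$ with $(1-\lambda)\text{Var}_{P_0^d}(f)+(1-\sigma)\lambda\text{Var}_{P_0^c}(f)+\frac{(1-\sigma)\lambda(1-\lambda)}{1-\sigma\lambda}\bigl(P_0^df-P_0^cf\bigr)^2$. Writing $a=P_0^df$ and $b=P_0^cf$, the second-moment contributions cancel (both sides contain $T(f^2)$), so this is equivalent to the purely algebraic identity $\bigl((1-\lambda)a+(1-\sigma)\lambda b\bigr)^2=(1-\sigma\lambda)\bigl[(1-\lambda)a^2+(1-\sigma)\lambda b^2\bigr]-(1-\sigma)\lambda(1-\lambda)(a-b)^2$, which I would check by matching coefficients of $a^2$, $ab$ and $b^2$ (they come out to $(1-\lambda)^2$, $2(1-\lambda)(1-\sigma)\lambda$ and $(1-\sigma)^2\lambda^2$ on each side). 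This algebraic simplification, together with keeping track of the prefactor asymptotics, is the only step requiring actual work; there is no analytic obstacle, the substance being entirely carried by Lemmas~\ref{LemmaConvergenceKn} and~\ref{LemmaLLNDistinctSc}. Throughout one works, as in Theorem~\ref{thm:PYBVM}, under $(P_0+G)f^2<\infty$, which is exactly what guarantees finiteness of $P_0f^2$, $Gf^2$ and the variances above and validity of the almost sure laws of large numbers for $f$ and $f^2$.
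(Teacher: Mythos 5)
Your proposal is correct and follows essentially the same route as the paper's proof: substitute the closed-form posterior mean and variance from Lemma~\ref{lem:PosteriorMeanAndVariance}, pass to the limit term by term using $K_n/n\to\lambda$ and the law of large numbers over the distinct values (Lemmas~\ref{LemmaConvergenceKn} and~\ref{LemmaLLNDistinctSc}), and finish with the same algebraic identity converting $T(f^2)-(Tf)^2/(1-\sigma\lambda)$ into the variance-plus-cross-term form. Your write-up is, if anything, more explicit than the paper's about which lemmas supply the almost sure limits and about verifying the coefficient identity.
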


\begin{proof}[Proof of Lemma~\ref{lem:PosteriorMeanAndVariance}]
We begin by recalling the posterior distribution from Section~\ref{SectionProofs}. Note that we have the following results:
\begin{itemize}
\item $\EE[R_n] = \frac{n - K_n \sigma}{n + M}$ and $\text{Var}(R_n) = \frac{(n - K_n \sigma)(M + K_n \sigma)}{(n+M)^2(n+M + 1)}$.
\item $\EE[ Q_n(f)] = G(f)$, $\text{Var}( Q_n(f)) = \frac{1 - \sigma}{M + \sigma K_n} \text{Var}_G(f)$.
\end{itemize}
The first two results are standard results for Beta distributed random variables, and the last two results are because $Q_n$ is a Pitman-Yor process. Now we just need to compute the moments for the weights $W_j$. We use the following results from the Dirichlet distribution. If $\tilde{X} \sim \text{Dir} \left( K_n, \alpha_1, \ldots, \alpha_{K_n} \right)$, then 
\[
\EE[\tilde{X}_i] = \frac{\alpha_i}{\sum_{k = 1}^{K_n} \alpha_k},
\]
\[
\var( \tilde{X}_i ) = \frac{ \alpha_i ( \sum_{k = 1}^{K_n} \alpha_k - \alpha_i )}{ (\sum_{k = 1}^{K_n} \alpha_k)^2( 1 + \sum_{k = 1}^{K_n} \alpha_k )},
\] 
and
\[
\text{Cov}(\tilde{X}_i, \tilde{X}_j) = \frac{ - \alpha_i \alpha_j}{ (\sum_{k = 1}^{K_n} \alpha_k)^2 (1 + \sum_{k = 1}^{K_n} \alpha_k)}.
\]
In our case $\alpha_i = N_{i,n} - \sigma$, $K = K_n$ and $\sum_{k = 1}^{K_n} \alpha_k = n - \sigma K_n$. Then a direct computation shows that
\[
\EE[ \sum_{j = 1}^{K_n} W_j f(\tilde{X}_j)] = \sum_{j = 1}^{K_n} \frac{ N_{j,n} - \sigma}{n - K_n \sigma} f(\tilde{X}_j).
\]
For the variance we use that, for independent random variables, the variance of the sum is the sum of the covariances.
\begin{align*}
\var(\sum_{i = 1}^{K_n} W_j&f(\tilde{X}_j) | X_1, \ldots, X_n) = \sum_{i \neq j}  \text{Cov}(W_i, W_j) f(\tilde{X}_i)f(\tilde{X}_j) 
 + \sum_{i = 1}^{K_n} \text{Var}( W_i) f(\tilde{X}_i)^2 \\
&\qquad= \sum_{i \neq j} \frac{-(N_{i,n} - \sigma)(N_{j,n} - \sigma)}{(n - \sigma K_n)^2 (n - \sigma K_n + 1)} f(\tilde{X}_i)f(\tilde{X}_j)\\
 &\quad\qquad\qquad+ \sum_{i = 1}^{K_n} \frac{(N_{i,n} - \sigma) (n - \sigma K_n - N_{i,n} + \sigma)}{(n - \sigma K_n)^2 (n - \sigma K_n + 1)} f(\tilde{X}_i)^2 \\
&\qquad= - \frac{ \left( \sum_{j = 1}^{K_n} (N_{j,n} - \sigma) f(\tilde{X}_j)\right)^2 }{(n - \sigma K_n)^2 (n - \sigma K_n + 1)} 
 + \frac{\sum_{j = 1}^{K_n} (N_{j,n} - \sigma) f(\tilde{X}_j)^2}{ (n - \sigma K_n) (n - \sigma K_n + 1)}.
\end{align*}
Now we can compute the mean and variance. Using independence between $R_n, W$ and $Q_n$ and linearity we see that
\[
\EE[ P(f) | X_1, \ldots, X_n] = \sum_{j = 1}^{K_n} \frac{ N_{j,n} - \sigma}{n + M} f(\tilde{X}_j) + \frac{M + \sigma K_n}{n+M} G(f).
\]
In order to compute the variance we apply the law of total variance. For any two random variables $X,Y$ with finite second moment we have that
\[
\text{Var}(X) = \EE[ \text{Var}\left( X | Y \right)] + \text{Var}\left(\EE[ X | Y ] \right).
\]
We split into conditioning on $R_n$ and the rest, so we can use the independence between $W$ and $Q_n$.
We compute these piece by piece. First consider
\begingroup
\allowdisplaybreaks
\begin{align*}
\intertext{First consider}
& \EE\left[\text{Var}\left( R_n \sum_{j = 1}^{K_n} W_j f(\tilde{X}_j) + (1 - R_n) Q_n(f) | R_n \right)\right].\\
\intertext{Due to the independence of $W$ and $Q_n$ given $R_n$}
 &=\EE\left[ R_n^2 \text{Var}\left( \sum_{j = 1}^{K_n} W_j f(\tilde{X}_j) \right) + (1 - R_n)^2 \text{Var} \left(Q_n(f) \right)\right].\\
 \intertext{Simplifying the expression yields}
 &= \EE[R_n^2] \text{Var}\left( \sum_{j = 1}^{K_n} W_j f(\tilde{X}_j) \right) + \EE[(1 - R_n)^2] \text{Var} \left(Q_n(f) \right)].\\
 \intertext{Filling in the known moments results in}
 &= \frac{(n - \sigma K_n) (n + 1 - \sigma K_n)}{(n +M )(n + M + 1)}\text{Var}\left( \sum_{j = 1}^{K_n} W_j f(\tilde{X}_j) \right) \\
 & \qquad + \frac{(M + \sigma K_n)(M + \sigma K_n + 1)}{(n +M )(n + M + 1)}\text{Var} \left(Q_n(f) \right).\\
 \intertext{Expanding the variance terms and simplifying gives}
 &= - \frac{\left( \sum_{j = 1}^{K_n} (N_{j,n} - \sigma) f(\tilde{X}_j)\right)^2 }{(n - \sigma K_n)(n +M )(n + M + 1)} + \frac{\sum_{j = 1}^{K_n} (N_{j,n} - \sigma) f(\tilde{X}_j)^2}{(n +M )(n + M + 1)} \\
 & \qquad + \frac{( 1 - \sigma)(M + \sigma K_n + 1)}{(n +M )(n + M + 1)} \text{Var}_G(f).\\
\intertext{Next wel deal with}
 &\text{Var}\left( \EE[ R_n \sum_{j = 1}^{K_n} W_j f(\tilde{X}_j) + (1 - R_n) Q_n(f) | R_n] \right).\\
 \intertext{Computing the expected value gives}
 &= \text{Var} \left( R_n \sum_{j = 1}^{K_n} \frac{N_{j,n} - \sigma}{n - K_n \sigma} f(\tilde{X}_j) + (1 - R_n) G(f)  \right).\\
 \intertext{Reorganising terms}
 &= \text{Var} \left( G(f) + R_n (\sum_{j = 1}^{K_n} \frac{N_{j,n} - \sigma}{n - K_n \sigma} f(\tilde{X}_j) - G(f))\right).\\
 \intertext{The constant term does not contribute to the variance so can be , and then taking the square of the constant in front of $R_n$ results in}
 &= (\sum_{j = 1}^{K_n} \frac{N_{j,n} - \sigma}{n - K_n \sigma} f(\tilde{X}_j) - G(f))^2 \text{Var} \left(R_n \right).\\
 \intertext{Computing the variance of $R_n$ gives}
 &= (\sum_{j = 1}^{K_n} \frac{N_{j,n} - \sigma}{n - K_n \sigma} f(\tilde{X}_j) - G(f))^2 \frac{(n - \sigma K_n)(M + \sigma K_n)}{(n + M)^2(n + M + 1)}.
\end{align*}
Therefore by the law of total variance we find the result. 
\endgroup
\end{proof}

\begin{proof}[Proof of Lemma~\ref{lem:LimitPosteriorMeanAndVariance}]
We begin with some basic results which we will apply in several places.
We note the following two almost sure limits: $\frac{K_n}{n} \rightarrow \lambda$ $P_0$-almost surely and
\[
\frac{\sum_{j = 1}^{K_n}  (N_{j,n} - \sigma) f(\tilde{X}_j)}{n}  \rightarrow (1 - \lambda) P_0^d(f) + (1 - \sigma) \lambda P_0^c(f) \qquad P_0 \text{-a.s.} 
\]
For the posterior mean we know the exact formula by \Cref{lem:PosteriorMeanAndVariance} and therefore the following limit can be computed:
\begin{align*}
\EE[ P(f) | X_1, \ldots, X_n] &= 
\sum_{j = 1}^{K_n} \frac{N_{j,n} - \sigma}{n + M} f(\tilde{X}_j) + \frac{M + \sigma K_n}{n+M} G(f)\\
&\rightarrow (1 - \lambda) P_0^d(f) + (1 - \sigma)\lambda P_0^c(f) + \lambda \sigma G(f) P_0 \text{-a.s.}
\end{align*}

Recall from~\Cref{lem:PosteriorMeanAndVariance} the formula for the posterior variance. We analyse this term by term. They all follow directly from the remarks at the beginning of the the proof, and the limits hold $P_0$-almost surely.

\begin{align*}
\intertext{First we find that}
\sum_{j = 1}^{K_n} \frac{N_{j,n} - \sigma}{n - K_n \sigma} f(\tilde{X}_j) &\rightarrow \frac{(1 - \lambda) P_0^d(f) + (1 - \sigma)\lambda P_0^c(f)}{1 - \sigma \lambda}.\\
\intertext{Secondly,}
n \frac{(n - \sigma K_n)(M + \sigma K_n)}{(n + M)^2(n + M + 1)} &\rightarrow (1 - \sigma \lambda) \sigma \lambda.\\
\intertext{Next,}
- n\frac{ \left( \sum_{j = 1}^{K_n} (N_{j,n} - \sigma) f(\tilde{X}_j)\right)^2 }{ (n - \sigma K_n)(n +M )(n + M + 1)} &\rightarrow - \frac{ \left((1 - \lambda) P_0^d(f) + (1 - \sigma) \lambda P_0^c(f) \right)^2}{ 1 - \sigma \lambda}.\\
\intertext{Also,}
n\frac{\sum_{j = 1}^{K_n} (N_{j,n} - \sigma) f(\tilde{X}_j)^2}{(n +M )(n + M + 1)} &\rightarrow (1 - \lambda) P_0^d(f^2) + (1 - \sigma) \lambda P_0^c(f^2).
\intertext{And finally,}
n\frac{( 1 - \sigma)(M + \sigma K_n + 1)}{(n +M )(n + M + 1)} \text{Var}_G(f) &\rightarrow (1 - \sigma)\sigma\lambda \text{Var}_G(f).
\end{align*}
This means we now have computed the limit of the posterior variance. We will now add all the terms together, and by the continuous mapping theorem we find that,

\begin{align*}
n \text{Var}\left( P(f) | X_1, \ldots, X_n \right) & \rightarrow 
(1 - \sigma \lambda) \sigma \lambda \\
& \qquad \left(\frac{(1 - \lambda) P_0^d(f) + (1 - \sigma)\lambda P_0^c(f)}{1 - \sigma \lambda} - G(f)\right)^2
\\
&  - \frac{\left((1 - \lambda) P_0^d(f) + (1 - \sigma) \lambda P_0^c(f) \right)^2}{1 - \sigma \lambda} \\
& + (1 - \lambda) P_0^d(f^2) + (1 - \sigma) \lambda P_0^c(f^2) \\
& + (1 - \sigma)\sigma\lambda \text{Var}_G(f) \qquad \text{a.s. } P_0.
\end{align*}

Note that
\begin{align*}
&  - \frac{\left((1 - \lambda) P_0^d(f) + (1 - \sigma) \lambda P_0^c(f) \right)^2}{1 - \sigma \lambda} \\
& + (1 - \lambda) P_0^d(f^2) + (1 - \sigma) \lambda P_0^c(f^2) \\
& =( 1- \lambda) \text{Var}_{P_0^d}(f) + (1 - \sigma)\lambda \text{Var}_{P_0^c}(f)\\
& + \frac{(1 - \sigma) \lambda(1 - \lambda)}{1 - \sigma \lambda} \left( P_0^d(f) - P_0^c(f) \right)^2.
\end{align*}

Combining everything yields the Lemma.
\end{proof}

\end{document}